\setlist[enumerate]{itemsep=0.5ex}
\theoremstyle{plain}% default 
\newtheorem{theorem}{Theorem}[section]
\newtheorem{proposition}[theorem]{Proposition}
\newtheorem{lemma}[theorem]{Lemma}
\newtheorem{conjecture}[theorem]{Conjecture}
\theoremstyle{definition} 
\newtheorem{definition}[theorem]{Definition}
\newtheorem{example}[theorem]{Example}
\newtheorem*{claim*}{Claim}
\theoremstyle{remark} 
\newtheorem{remark}[theorem]{Remark}
\numberwithin{equation}{section}
\newcommand{\Sc}{\mathrm{Sc}}
 \newcommand{\cK}{\mathcal{K}}
 \newcommand{\dist}{\mathrm{dist}}
\newcommand{\R}{\mathbb{R}}
\newcommand{\diam}{\mathrm{diam}}
\newcommand{\Z}{\mathbb{Z}}
\newcommand{\rank}{\mathrm{rank}}
\newcommand{\Ric}{\mathrm{Ric}}
\newcommand{\lf}{\mathit{lf}}
\newcommand{\macro}{\mathrm{mac}}
\DeclareMathOperator{\width}{width}
\DeclareMathOperator{\fillrad}{FillRad}
\DeclareMathOperator{\inj}{Inj}
\DeclareMathOperator{\radsphere}{Rad_{\mathbb{S}^n}}
\DeclareMathOperator{\kr}{ker}
\DeclareMathOperator{\im}{Im}
\DeclareMathOperator{\lip}{Lip}
\DeclareMathOperator{\conj}{Conj}
\DeclareMathOperator{\conv}{Conv}
\newcommand{\interior}[1]{%
	{\kern0pt#1}^{\mathrm{\,o}}%
}
\let\save@mathaccent\mathaccent
\newcommand*\if@single[3]{%
	\setbox0\hbox{${\mathaccent"0362{#1}}^H$}%
	\setbox2\hbox{${\mathaccent"0362{\kern0pt#1}}^H$}%
	\ifdim\ht0=\ht2 #3\else #2\fi
}
\newcommand*\rel@kern[1]{\kern#1\dimexpr\macc@kerna}
\newcommand*\wideaccent[2]{\@ifnextchar^{{\wide@accent{#1}{#2}{0}}}{\wide@accent{#1}{#2}{1}}}
\newcommand*\wide@accent[3]{\if@single{#2}{\wide@accent@{#1}{#2}{#3}{1}}{\wide@accent@{#1}{#2}{#3}{2}}}
\newcommand*\wide@accent@[4]{%
	\begingroup
	\def\mathaccent##1##2{%
		%Enable nesting of accents:
		\let\mathaccent\save@mathaccent
		%If there's more than a single symbol, use the first character instead (see below):
		\if#42 \let\macc@nucleus\first@char \fi
		%Determine the italic correction:
		\setbox\z@\hbox{$\macc@style{\macc@nucleus}_{}$}%
		\setbox\tw@\hbox{$\macc@style{\macc@nucleus}{}_{}$}%
		\dimen@\wd\tw@
		\advance\dimen@-\wd\z@
		%Now \dimen@ is the italic correction of the symbol.
		\divide\dimen@ 3
		\@tempdima\wd\tw@
		\advance\@tempdima-\scriptspace
		%Now \@tempdima is the width of the symbol.
		\divide\@tempdima 10
		\advance\dimen@-\@tempdima
		%Now \dimen@ = (italic correction / 3) - (Breite / 10)
		\ifdim\dimen@>\z@ \dimen@0pt\fi
		%The bar will be shortened in the case \dimen@<0 !
		\rel@kern{0.6}\kern-\dimen@
		\if#41
		#1{\rel@kern{-0.6}\kern\dimen@\macc@nucleus\rel@kern{0.4}\kern\dimen@}%
		\advance\dimen@0.4\dimexpr\macc@kerna
		%Place the combined final kern (-\dimen@) if it is >0 or if a superscript follows:
		\let\final@kern#3%
		\ifdim\dimen@<\z@ \let\final@kern1\fi
		\if\final@kern1 \kern-\dimen@\fi
		\else
		#1{\rel@kern{-0.6}\kern\dimen@#2}%
		\fi
	}%
	\macc@depth\@ne
	\let\math@bgroup\@empty \let\math@egroup\macc@set@skewchar
	\mathsurround\z@ \frozen@everymath{\mathgroup\macc@group\relax}%
	\macc@set@skewchar\relax
	\let\mathaccentV\macc@nested@a
	%The following initialises \macc@kerna and calls \mathaccent:
	\if#41
	\macc@nested@a\relax111{#2}%
	\else
	%If the argument consists of more than one symbol, and if the first token is
	%a letter, use that letter for the computations:
	\def\gobble@till@marker##1\endmarker{}%
	\futurelet\first@char\gobble@till@marker#2\endmarker
	\ifcat\noexpand\first@char A\else
	\def\first@char{}%
	\fi
	\macc@nested@a\relax111{\first@char}%
	\fi
	\endgroup
}
\newcommand\overbar{\wideaccent\overline}
\newcommand*{\transpose}{%
	{\mathpalette\@transpose{}}%
}
\newcommand*{\@transpose}[2]{%
	% #1: math style
	% #2: unused
	\raisebox{\depth}{$\m@th#1\intercal$}%
}
\begin{document}
	
	\title{Filling Radius, Quantitative $K$-theory and Positive Scalar Curvature }
		\author{Jinmin Wang}
	\address[Jinmin Wang]{Department of
		Mathematics, Texas A\&M University}
	\email{jinmin@tamu.edu}
%	\thanks{The first author is partially supported by NSFC11420101001.}
	% 
	\author{Zhizhang Xie}
	\address[Zhizhang Xie]{ Department of Mathematics, Texas A\&M University }
	\email{xie@math.tamu.edu}
	\thanks{The second author is partially supported by NSF 1952693 and 2247322.}
	\author{Guoliang Yu}
	\address[Guoliang Yu]{ Department of
		Mathematics, Texas A\&M University}
	\email{guoliangyu@math.tamu.edu}
	\thanks{The third author is partially supported by NSF 2000082, 2247313 and the Simons Fellows Program.}
		\author{Bo Zhu}
	\address[Bo Zhu]{Department of Mathematics, Texas A\&M University }
	\email{bozhu@tamu.edu}
	%\thanks{The second author is partially supported by NSF 1800737 and 1952693.} 
	% 
	
	\subjclass[2010]{Primary 53C23, 19D55; Secondary 58B34, 46L80}
	
	%\date{\today}	
 \begin{abstract}
 	We prove a quantitative upper bound on the filling radius of complete, spin manifolds with uniformly positive scalar curvature using the quantitative operator $K$-theory and index theory.
 \end{abstract}

	\maketitle
%\tableofcontents

\section{Introduction}

In this paper, we study the following filling radius conjecture of Gromov. 
\begin{conjecture}[\cite{gromovmetricstructure}]\label{conj: gromov_filling_radius_conjecture} %P267
	If $(M^n,g)$ is a complete Riemannian manifold with uniformly positive scalar curvature $\Sc_g \geq \sigma^2 >0$, then there exists a universal constant $c_n$ depending only on $n$ such that
	\begin{equation}
		\fillrad(M) \leq \frac{c_n}{\sigma}.
	\end{equation}

\end{conjecture}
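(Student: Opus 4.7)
The plan is to argue by contradiction: assume that $\fillrad(M) > c_n/\sigma$ for a constant $c_n$ to be determined, and use quantitative index theory of the spin Dirac operator to derive a contradiction. The three ingredients are (a) the Lichnerowicz--Weitzenb\"ock formula $D^2 = \nabla^*\nabla + \tfrac{1}{4}\Sc \geq \tfrac{1}{4}\sigma^2$, which gives a uniform spectral gap; (b) finite propagation speed for the wave equation $e^{itD}$; and (c) the Oyono-Oyono--Yu quantitative $K$-theory $K_0^{\varepsilon,r}$ of the filtered Roe algebra $C^*(M)$, which lets us track both the propagation of representatives and their deviation from being exact projections.

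The first step is to build a quantitative representative of the Dirac class at a controlled scale. Using the spectral gap, one can pick a compactly supported smooth approximation $\chi$ of the sign function on $\R \setminus (-\sigma/2, \sigma/2)$ whose Fourier transform is essentially supported in $[-T, T]$ with $T = O(1/\sigma)$. Via the wave operator, $\chi(D)$ is then a bounded self-adjoint operator of propagation at most $T$. Standard estimates produce an $(\varepsilon, r)$-projection (with $r \lesssim 1/\sigma$) representing the Dirac index class $[D] \in K_0^{\varepsilon, r}(C^*(M))$. Moreover, the PSC hypothesis forces this class to vanish in the smaller group $K_0^{\varepsilon, r}(C^*_L(M))$ of the localization algebra at the same scale, because the entire spectral resolution is supported away from zero and can be deformed to a constant path without crossing $0$.

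The second step is to manufacture an obstruction to this vanishing out of a lower bound on the filling radius. Using the isometric Kuratowski embedding $\iota: M \hookrightarrow L^\infty(M)$, the hypothesis $\fillrad(M) > c_n/\sigma$ says that the fundamental class $[\iota(M)]$ does not bound within the $c_n/\sigma$-neighborhood $U$ of $\iota(M)$. A quantitative coarse assembly map, built from a partition of unity on $U$ and a controlled triangulation at scale $1/\sigma$, converts this nontrivial homology class into a class $\alpha \in K^{\varepsilon, r}_*$ of a suitable filtered algebra modelling $U$ at scale $\sim 1/\sigma$. The coarse pairing $\langle [D], \alpha \rangle$ then equals, on the topological side, a nonzero characteristic number (essentially $\widehat{A}(M)$ paired with the fundamental class, pulled back from $U$); on the analytic side, the propagation estimate from Step~1 shows the pairing must vanish once $r < \fillrad(M)$, giving the contradiction and forcing $\fillrad(M) \leq c_n/\sigma$.

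The main obstacle will be Step~2: organizing the nontriviality of the fundamental class in the $r$-neighborhood of the Kuratowski image into a genuine quantitative $K$-theory element whose pairing with $[D]$ is computable. This requires a quantitative refinement of the coarse Mayer--Vietoris / assembly machinery, tracking propagation through each controlled cover and ensuring that the cocycle representing $[\iota(M)]$ is supported at a scale strictly smaller than $\fillrad(M)$ yet larger than $1/\sigma$ for the PSC side to apply. A secondary difficulty is that $M$ is only complete, not compact, so all constructions must take place in the multiplier/Roe algebra framework uniformly in the manifold, and the constant $c_n$ must be extracted from explicit bounds on the Fourier transform of $\chi$ together with the combinatorics of the quantitative cover.
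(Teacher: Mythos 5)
The statement you are attempting to prove is stated in the paper as a \emph{conjecture} (Gromov's filling radius conjecture), and the authors explicitly record that it is open for $n\geq 4$. The paper does not prove it; Theorem \ref{thm:main} only establishes a weaker, large-scale version under extra hypotheses --- that $M$ is spin, has bounded geometry, and has finite asymptotic dimension $m$ with a \emph{known} diameter control function $\mathcal{D}$ --- and the resulting bound is $\fillrad_{\mathbb Q}(M) \leq \mathcal{D}(Ae^{Bm}/\sigma)$, which depends on $m$ and $\mathcal D$, not only on $n$ and $\sigma$, and concerns the rational filling radius rather than the integral one. Your proposal should therefore be read as an attempt at an open problem, and it has a concrete gap.

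The gap is exactly in your Step~2. You assert that the nonvanishing of $[\iota(M)]$ in $H^{\lf}_n(N_{c_n/\sigma}(M))$ can be ``converted into a class $\alpha\in K_*^{\varepsilon,r}$ of a suitable filtered algebra modelling $U$ at scale $\sim 1/\sigma$.'' There is no reason this can be done at a scale that is universal in $n$. The neighborhood $N_R(M)\subset L^\infty(M)$ is infinite-dimensional, and the crucial quantitative input (Theorem~\ref{thm: K_rep_theorem}) says that a $K$-theory class on an $m$-dimensional simplicial complex admits an $L_m$-Lipschitz representative with $L_m\leq C_1 e^{C_2 m}$: the Lipschitz scale degrades \emph{exponentially} in the dimension of the complex, not in the dimension $n$ of the manifold. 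To get a finite-dimensional complex at all, one needs the map $f_r\colon M\to\mathcal N_r$ to a nerve of a uniformly bounded cover with bounded $r$-multiplicity; producing such a cover with controlled multiplicity and diameter at all scales is precisely the definition of finite asymptotic dimension with control function $\mathcal D$. Without that hypothesis, no bound on $\lip(f_r)$ is available and your ``quantitative coarse assembly map'' has no controlled scale. This is why the paper's constant is $\mathcal D(Ae^{Bm}/\sigma)$ and cannot be made a universal $c_n/\sigma$.

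Two further issues. First, the conjecture carries no spin hypothesis, but your argument is built entirely on the spin Dirac operator and the Lichnerowicz formula; the paper's theorem assumes spin for exactly this reason. Second, the index-theoretic pairing only sees the fundamental class rationally (via the Chern character), which is why the conclusion is about $\fillrad_{\mathbb Q}$; as noted in the paper, there exist manifolds with $\fillrad(M)=\infty$ but $\fillrad_{\mathbb Q}(M)<\infty$, so any Dirac-operator approach of this type cannot prove the integral version of the conjecture.
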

Here $\fillrad(M)$ stands for the filling radius of $M$, namely the smallest radius of the neighborhood of $M$ inside any isometric embedding, in which $M$ is homologous to zero. See Definition \ref{def: filling_radius} for the precise definition of filling radius. 
A manifold  $M$ is said to be aspherical if $\pi_k(M)=0$ for any $k \geq 2$.
Since the universal cover of any closed aspherical manifold has infinite filling radius (see Example \ref{example: ashperical_filling} for more details), Conjecture \ref{conj: gromov_filling_radius_conjecture} implies the Gromov--Lawson conjecture \cite{gl_psc_dirac,Rosenberg_psc3} that any closed aspherical manifold admits no complete Riemannian metric with positive scalar curvature. 
Note that Conjecture \ref{conj: gromov_filling_radius_conjecture} is weaker than Gromov's Uryson width conjecture \cite[Section 3]{Gromovadozen} (also see Conjecture \ref{conj: gromov_uryson_width_conjecture} in the Appendix of this paper).

Conjecture \ref{conj: gromov_filling_radius_conjecture} has been proved for $n=3$ \cite{gl_psc_dirac, Gromov4lectures2019,liokumovich2023waist}. It remains open for $n \geq 4$ in general. The main purpose of this paper  is to employ the quantitative operator $K$-theory and index theory to prove a large scale version of  Conjecture \ref{conj: gromov_filling_radius_conjecture} for manifolds with finite asymptotic dimension.

\begin{theorem}\label{thm:main}
	Let $(M^n,g)$ be a complete, noncompact, spin Riemannian manifold with finite asymptotic dimension $m$ with control function $\mathcal{D}$. If $(M,g)$ has uniformly positive scalar curvature $Sc_g\geq \sigma^2 >0$ and bounded geometry, then there exists a constant $c=c(\sigma, m, \mathcal{D})$ such that
	\begin{equation}
		\fillrad_{\mathbb{Q}}(M) \leq c.
	\end{equation}
\end{theorem}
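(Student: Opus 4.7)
The plan is to argue by contradiction. Suppose $\fillrad_\mathbb{Q}(M) > R$ for a constant $R = R(\sigma, m, \mathcal{D})$ to be chosen later. I would then produce a quantitative higher index class associated to the spin Dirac operator on $M$ which must vanish because of the Lichnerowicz spectral gap, but which must also be nonzero because the macroscopic rational fundamental class of $M$ does not collapse within distance $R$. The contradiction forces the bound.

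\textbf{Spectral gap and quantitative vanishing.} Let $D$ be the spin Dirac operator on $M$. The Lichnerowicz formula gives
\begin{equation*}
D^{2} \;=\; \nabla^{*}\nabla + \frac{\Sc_g}{4} \;\geq\; \frac{\sigma^{2}}{4},
\end{equation*}
so $\mathrm{spec}(D)$ is disjoint from $(-\sigma/2,\sigma/2)$. Approximating the normalizing function $x\mapsto x/\sqrt{x^{2}+\sigma^{2}/4}$ by the Fourier transform of a compactly supported cutoff and invoking finite propagation speed of the wave operator produces a quantitative index
\begin{equation*}
\mathrm{Ind}_{\sigma}(D)\in K_{n}^{\varepsilon,r}\!\left(C^{*}(M)\right)
\end{equation*}
at precision $\varepsilon$ and propagation $r = r(\sigma)$ of order $1/\sigma$. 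Bounded geometry ensures this class is well defined with uniform local norm estimates. A quantitative version of the Rosenberg vanishing argument then forces $\mathrm{Ind}_{\sigma}(D)=0$ at this level.

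\textbf{Reduction via finite asymptotic dimension.} For any chosen scale $s$, finite asymptotic dimension $m$ with control function $\mathcal{D}$ yields a cover of $M$ by sets of diameter $\leq \mathcal{D}(s)$ with $s$-disjoint refinement of multiplicity $\leq m+1$. The nerve $P_{s}$ is an $m$-dimensional simplicial complex, and the partition-of-unity map $\Phi_{s}:M\to P_{s}$ has controlled coarse behaviour. A Mayer--Vietoris induction up the skeleta of $P_{s}$ in quantitative $K$-theory lets one detect classes in $K^{\varepsilon,r}_{*}(C^{*}(M))$ through at most $m+1$ steps, each costing a propagation loss controlled by $\mathcal{D}$ and a precision loss of a fixed factor; the cumulative loss is a function only of $(m,\mathcal{D})$.

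\textbf{Nontriviality from large filling radius and conclusion.} The hypothesis $\fillrad_{\mathbb Q}(M)>R$ exhibits the rational fundamental class of $M$ as a nontrivial class in the $R$-Rips complex of $M$. Pushing this forward by $\Phi_{s}$ for $s$ comparable to $R$ and pairing against the quantitative Dirac index via the Chern character / index pairing after $\otimes\mathbb Q$, one obtains a nontrivial pairing with $\mathrm{Ind}_{\sigma}(D)$ at propagation $r' = r(\sigma) + C(m,\mathcal{D})$. Choosing $R$ larger than $r'$ makes the pairing survive at the scale at which $\mathrm{Ind}_{\sigma}(D)$ was shown to vanish, a contradiction. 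Setting $c := r'+1$ proves the theorem.

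\textbf{Main obstacle.} The hardest part is the calibration of the three propagation scales: the spectral-gap scale $r(\sigma)$, the scale $C(m,\mathcal{D})$ absorbed by the $(m+1)$-step Mayer--Vietoris reduction on the nerve, and the scale at which filling-radius data can be read off from a quantitative $K$-theoretic pairing. Ensuring that each of these steps preserves classes with propagation bounds depending \emph{only} on $(\sigma,m,\mathcal{D})$ rather than on the size of $M$ itself is where bounded geometry and the quantitative operator $K$-theory machinery of the paper are essential; in particular one must verify that the Chern-character-type pairing between rational macroscopic homology and the quantitative index is compatible with the controlled propagation coming from the nerve construction, with no hidden dependence on the global geometry of $M$.
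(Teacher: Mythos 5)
Your overall strategy—spectral gap from Lichnerowicz, finite asymptotic dimension giving a nerve complex, quantitative $K$-theory bridging the two to constrain the filling radius—is the right shape and matches the paper in spirit. But the technical mechanism you propose diverges from the paper's in a way that leaves a genuine gap, and the gap is exactly at the step your sketch treats most loosely.

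The paper works on the $K$-cohomology side: it uses nondegeneracy of the pairing $K_0(\mathcal N_{r_0})\otimes K^0(\mathcal N_{r_0})\to\mathbb Z$ to produce a dual class $\beta\in K^0(\mathcal N_{r_0})$ pairing nontrivially with $(f_{r_0})_*[D]$, then invokes Theorem~\ref{thm: K_rep_theorem} to represent $\beta$ by $L_m$-Lipschitz projections $p,q$, pulls them back to $M$ (so the Lipschitz constant gets multiplied by $\lip(f_{r_0})\leq(m+1)^3/r_0$), and finally uses Proposition~\ref{thm: Scalar_curvatyre_Lip}: a positive lower scalar curvature bound forces $\langle[D],f_{r_0}^*\beta\rangle=0$ when the Lipschitz constant is below $\sigma/(16c_1c_2)$. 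The whole proof is a numerical inequality between $\lip(f_{r_0}^*\beta)$ and $\sigma$. Your proposal instead works on the Roe algebra / propagation side, constructing $\mathrm{Ind}_\sigma(D)\in K^{\varepsilon,r}_n(C^*(M))$ and proposing to show it is simultaneously zero (spectral gap) and nonzero (large filling radius). The problem is that your "pairing against the quantitative Dirac index" step is not well defined as written: $\Phi_{s*}[M]$ lives in the homology of the nerve $P_s$ (or a Rips complex), while $\mathrm{Ind}_\sigma(D)$ lives in the $K$-theory of the Roe algebra $C^*(M)$; there is no direct pairing between these two objects. Turning nontriviality of the pushforward fundamental class into nontriviality of a Roe-algebra index class is precisely a (quantitative) coarse assembly statement, and your appeal to a Mayer--Vietoris induction up the skeleta of $P_s$ is a gesture toward proving such an assembly theorem, not a citation of one. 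Nothing in the paper supplies that, and the paper deliberately avoids needing it: by going to $K^0(\mathcal N_r)$, pulling back, and pairing against $[D]$ on $M$, all the quantitative bookkeeping reduces to Lipschitz constants of functions, for which Theorem~\ref{thm: K_rep_theorem} gives the crucial uniform bound $L_m\leq C_1e^{C_2m}$. Also note you never address dimension parity; the paper's proof genuinely requires the even case first and then Lemma~\ref{remark: fillrad_of_direct_roduct} ($\fillrad_{\mathbb Q}(M\times\mathbb R)=\fillrad_{\mathbb Q}(M)$) to handle odd $n$, and your sketch is silent on this. To salvage your route you would either need to prove a quantitative coarse Baum--Connes / assembly isomorphism at controlled propagation scales, or switch, as the paper does, to the dual picture where the entire argument collapses to a Lipschitz estimate.
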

From the proof of Theorem \ref{thm:main}, the constant $c$ is explicitly calculated as
$$c\leq \mathcal{D}\left(\frac{A e^{Bm}}{{\sigma}}\right),$$
where $A$ and $B$ are some universal positive constants. Recall that  a complete, non-compact Riemannian manifold $(M^n,g)$ is said to have asymptotic dimension $m$ with diameter control $\mathcal{D}$ if $m$ is the minimal integer such that for any $r>0$, $M$ admits a uniformly bounded cover $\mathcal{C} = \{U_i\}_{i \in I}$ such that (1)		any  ball $\mathcal{B}(p,r) = \{x \in M, \dist(x,p) < r\}$ intersects with at most $m+1$ members $\{U_{i_1}, \cdots, U_{i_m}\}$ in $\mathcal{C}$;  (2) $\displaystyle \sup_{i \in I}\diam(U_i) \leq \mathcal{D}(r)$. Here  $\mathcal{D}: \mathbb{R}^+ \rightarrow \mathbb{R}^+$ is a non-deceasing function with $\mathcal{D}(r) \geq r$.

The notion of asymptotic dimension was introduced by Gromov \cite{gromov_asy_invariant}. The third author developed a quantitative operator K-theory and proved that any finitely generated group with finite asymptotic dimension satisfies the  strong Novikov conjecture  \cite{Yu_Novikov_fsd}. 
The strong Novikov conjecture provides an algorithm for determining  whether the  higher index of an elliptic differential operator vanishes or not. The connection between 
the vanishing of higher index and positive scalar curvature was established by Rosenberg
\cite{Rosenberg_psc1, Rosenberg_psc2, Rosenberg_psc3}. We refer the reader to  \cite{zhang_psc_foliations, twy_Roe, gmw_psc_duality, ww_localized_index, Wang:2021tq,Wang:2021uj,Wang:2021um, Wang:2022vf} for some of the further developments on the (higher) index theory, in particular for spaces with singularities,  and its applications to scalar curvature problems.

 In this paper, the estimate of the filling radius in  Theorem \ref{thm:main} is obtained by studying the pairing 
of  the Dirac operator and  $K$-theory classes
with a certain quantitative Lipschitz control. The $K$-theory with Lipschitz control was initially developed to study the scalar curvature decay in  \cite{Wang:2021uj}. Here is an outline of the proof of Theorem \ref{thm:main}. If $M$ has finite asymptotic dimension $m$, then for any $r>0$, there exists an open cover with $r$-multiplicity bounded by $m+1$. The natural map from $M$ to the corresponding nerve complex $\mathcal N_r$ of this open cover has a small Lipschitz constant when $r$ is large.  Now if the filling radius of $M$ is too large, then the $K$-homology class of the Dirac operator on $M$ does not map to zero in the $K$-homology of the nerve complex $\mathcal N_r$ for some large $r$. This implies that the pairing between the Dirac operator $D$ on $M$ and the pullback of a certain $K$-theory class of $\mathcal N_r$ is not zero. However, by the Lipschitz controlled $K$-theory, the pullback of this $K$-theory class has a small Lipschitz constant when $r$ is large. By a standard curvature estimate,  the uniformly positive scalar curvature on $M$ implies that the pairing of $D$ with any vector bundle with small Lipschitz constant is zero. This leads to a contradiction, hence the filling radius cannot be too large under the uniformly positive scalar curvature condition. Our estimates in fact produce an explicit upper bound on the filling radius.

%The finite asymptotic dimension  condition guarantees the existence of a natural $K$-theory class with small Lipschitz control, provided that the filling radius is large. The pairing of  the Dirac operator with  this $K$-theory class of small Lipschitz constant  is non-zero; on the other hand, the uniformly positive scalar curvature condition implies the vanishing of the pairing of the Dirac operator  with such a $K$-theory class. 

The paper is organized as follows.  
In Section \ref{sec:fillingradius}, we review the notion of  filling radius and  some of its basic properties. In Section \ref{section: asym}, we review the notion of asymptotic dimension and Lipschitz controlled $K$-theory. In Section \ref{sec:main}, we prove the main theorem (Theorem \ref{thm:main}). In the appendix, to put the main results of the paper in a broader context,  we review a list of  metric invariants of Riemannian manifolds and their connections with positive scalar curvature.

\section{A basic introduction to Filling Radius}\label{sec:fillingradius}

In this section, we review the concept of filling radius introduced by Gromov. We recall some of the  basic properties of the filling radius and its connections to coarse geometry. Unless otherwise specified,  all Riemannian manifolds in this article are assumed to be smooth, connected and orientable.

\subsection{Kuratowski embedding}
In order to define the filling radius,  we first review the Kuratowski embedding, which isometrically embeds any given metric space into some Banach space.

\begin{definition} \label{def:Kuratowski_embedding}
	Suppose that $(X,d)$ is a metric space and $x_0$ is a point in $X$. Let $L^\infty(X)$ be the Banach space of all bounded functions on $X$ with the supremum norm $\| \ \|_{\infty}$. We define
	$\Psi \colon  (X,d) \rightarrow (L^\infty(X), \|\ \|_{\infty})$ by
	$${ \Psi (x)(y)=d(x,y)-d(x_{0},y),\quad \textup{ for all } x,y\in X}.$$
	The map $\Psi$ is called a Kuratowaki embedding of $X$.
\end{definition}

Note that this embedding depends on the choice of the based point $x_0$ and is therefore not entirely canonical. It is not difficult to see that  $\Psi$ is an isometric embedding.  Indeed,  on the one hand, for any $x_1, x_2 \in X$, by the triangle inequality, we have $$\|(\Psi(x_1)- \Psi(x_2)\|_{\infty} \leq d(x_1, x_2). $$
On the other hand, 
$$
\begin{array}{lcl}
	d(x_1, x_2)& = & d(x_1, x_2)-d(x_2,x_2)\\
	& =& \Psi(x_1)(x_2) -  \Psi(x_2)(x_2)\\
	&\leq& \sup_{y \in X} |\Psi(x_1)(y) - \Psi(x_2)(y)| \\
	&\leq& \|\Psi(x_1) - \Psi(x_2)\|_{\infty}.
\end{array}
$$
Consequently,  we have 
\begin{equation}
	d(x_1,x_2) = \|\Psi(x_1) - \Psi(x_2)\|_{\infty}
\end{equation}
for all $x_1,x_2 \in X$. From now on, we shall identify $X$ with $\Psi(X)$,  and write $X$ for both $X$ and $\Psi(X)$, since no confusion is likely to arise. In particular, we will view $X$ as a subspace  of $L^\infty(X)$ and denote the inclusion by $i\colon X\hookrightarrow L^\infty(X)$. 

If $(X,d)$ is a compact metric space, then there is a more canonical way of defining the embedding $\Psi$. Indeed, in this case, we may define 
$$\Psi \colon (X,d) \rightarrow L^\infty(X)$$
by
$$\Psi(x)(y) = d(y,x), \quad \forall x, y \in X.$$

% Hence, by definitions and discussions above, we conclude that any metric space $(X,d)$ is isometrically embedded into a Banach space $L^\infty(X)$ and the embedding is canonical if $(X,d)$ is compact. 

%\vspace{2mm}

\subsection{Filling radius} In this subsection, we  review the locally finite singular homology theory,   the notion of filling radius and some of its basic properties. 

Let $(X,d)$ be a complete metric space and $F$ a ring. We denote by $C_k^{\lf}(X; F)$ the abelian group of  formal sums (possibly infinite) of singular $k$-simplices: 
\begin{equation}
	c= \sum_{\sigma} r_{\sigma} \sigma,
\end{equation}
where $\sigma$ runs over all continuous maps from the standard $k$-simplex to $X$ and $r_\sigma\in F$, such that for any given bounded set $K \subset X$, there are only finitely many $\sigma$ in $c$ that have nonempty intersection with $K$. In this paper, we will mainly focus on the case where  $F$ equals  the integers $\mathbb{Z}$ or the rational numbers $\mathbb{Q}$. The usual definition of the boundary map $\partial $ for  singular chains makes these abelian groups into a chain complex:
\begin{equation}
	\cdots \rightarrow C^{\lf}_{k}(X; F) \rightarrow \cdots \rightarrow  C^{\lf}_{2}(X; F)
	\rightarrow C^{\lf}_{1}(X; F)\rightarrow C^{\lf}_{0}(X; F)\rightarrow 0.
\end{equation}

The locally finite $k$-th homology group $H_k^{\lf}(X; F)$ of $X$ is defined to be the $k$-th homology group of this  complex chain, that is, 
\begin{equation} \displaystyle	H_k^{\lf}(X; F) = \frac{\kr(\partial\colon  C^{\lf}_{k}(X; F) \rightarrow C^{\lf}_{k-1}(X; F))}{\im(\partial\colon  C^{\lf}_{k+1}(X; F) \rightarrow C^{\lf}_{k}(X; F))}.
\end{equation}
If $(X,d)$ is compact, then the locally finite homology groups coincide with the usual singular homology groups. Note that for each oriented $n$-dimensional manifold $M$, the fundamental class $[M]$ of $M$ generates $ H_n^{\lf}(M; F) \cong F$.

\begin{definition} [Gromov \cite{gromovfilling}]  \label{def: filling_radius}
	Suppose that $(M^n,g)$ is a Riemannian manifold and $\Psi\colon M \rightarrow L^\infty(M)$ is a Kuratowski embedding as given in Definition \ref{def:Kuratowski_embedding}. The filling radius of $M$ (with respect to $F$-coefficients)  is defined to be 
	\begin{equation}
		\fillrad_F(M) \coloneqq  \inf_ R\{ R : \Psi_*([M]) = 0\in H_n^{\lf}(N_R(\Psi(M)); F)\}, 
	\end{equation}
	where $N_R(\Psi(M))= \left\{x \in L^\infty(M) : 
	\mathrm{dist}(x, \Psi(M)) \leq R \right\}$ is the $R$-neighborhood of $\Psi(M)$ in $L^\infty(M)$. In the case $F= \mathbb Z$, we shall simply write $\fillrad(M)$ in place of $\fillrad_{\mathbb{Z}}(M)$.
\end{definition}

It is clear that  $\fillrad_{F}(M)$ is independent of the choice of the base point $x_0 \in M$ in Definition \ref{def: filling_radius}. One can show that if $(M^m,g)$ and $(N^n,h)$ are two Riemannian manifolds that  are coarsely equivalent, then $\fillrad_F(M) = \infty$ if and only if $\fillrad_F(N) =\infty$. %Equivalently, due to the pairing \eqref{eq: homology_pairing},  %we can also define the filling radius using the compact supported comhomology theory $H^*_c(M; F)$ instead of the locally finite homology theory $H_*^{lf}(M; F)$.

\begin{lemma}\label{lemma:filling_radius_decreasing}
	Let $(M_1^n,g_1)$ and $(M_2^n,g_2)$ be Riemannian manifolds. If $f\colon (M_1,g_1) \rightarrow (M_2, g_2)$ is a distance-decreasing map with $\deg(f) \neq 0$, then
	\begin{equation}
		 \fillrad_{F}(M_1) \geq \fillrad_{F}(M_2).
	\end{equation}
\begin{proof}
We assume that both $(M_1, g_1)$ and $(M_2, g_2)$ are compact Riemannian manifolds, and the non-compact case can be proved similarly. We write $R_1:=\fillrad_{F}(M_1)$ and $R_2:=\fillrad_{F}(M_2)$. Let $\Psi_i$ be the Kuratowaki embedding of $M_i$ for $i=1,2$ in Definition \ref{def:Kuratowski_embedding}. Since $f: (M_1, g_1) \rightarrow (M_2,g_2)$ is a distance-decreasing map, we naturally obtain a distance-decreasing map $h = \Psi_2\circ f : M_1 \rightarrow L^\infty(M_2)$.
% and $K_2: (M_2, g_2) \rightarrow L^\infty(M_2)$ is denoted as the Kuratowaki embedding,  we naturally obtain a distance decreasing map $h = \Psi\circ f : M_1 \rightarrow L^\infty(M_2)$.  Also, $\Psi_1: (M_1, g_1) \rightarrow L^\infty(M_1)$ is the Kuratowaki embedding, we may view $M$ as a subspace  $L^\infty(M_1)$. 

%Note that $h(m_1) \in L^\infty(M_2)$ for any $m_1 \in M_1$ and $\|h(m_1)(\cdot) - h(n_1)(\cdot)\|_{L^\infty(M_2)} \leq d_{g_1}(m_1, n_1) = \|d(m_1, \cdot) -d(n_1, \cdot)\|_{L^\infty(M_1)}$ for any $m_1, n_1 \in M_1$. Hence, we obtain $h: M_1=K_1(M_1) \subset L^\infty(M_1) \rightarrow L^\infty(M_2)$ is a distance decreasing map. 

We extend $h$ to $H: L^\infty(M_1) \rightarrow L^\infty(M_2)$ by defining
%$$H: L^\infty(M_1) \rightarrow L^\infty(M_2), \varphi_1 \in L^\infty(M_1) \mapsto \varphi_2,$$ where $\varphi_2 \in L^\infty(M_2)$ is defined by 
$$\big(H(\varphi_1)\big)(m_2) = \inf_{m_1 \in M_1} \left((h(m_1))(m_2) + \|\varphi_1- \Psi_1(m_1)\|_{L^\infty(M_1)}\right)$$
for any $\varphi_1\in L^\infty(M_1)$ and  $m_2 \in M_2$.
By the construction, we have 
$$\|H(\varphi_1) - H(\psi_1)\|_{L^\infty(M_2)} \leq  \|\varphi_1 - \psi_2\|_{L^\infty(M_1)}$$
 for any $\varphi_1, \psi_1 \in L^\infty(M_1)$, and $H(m_1) = h(m_1)$ for any $m_1 \in M$. It follows that for any $R>0$, $H(N_R(M_1))\subset N_R(M_2)$.
% Thus, $H: L^\infty(M_1) \rightarrow L^\infty(M_2)$ is an extension of $h$ and is a distance decreasing map $L^\infty(M_1) \rightarrow L^\infty(M_2)$.

By the definition of $R_1\coloneqq\fillrad_{F}(M_1)$, for any $\varepsilon > 0$, there exists an $(n+1)$-chain $c$ of $ N_{R_1+\varepsilon}(M_1) \subset L^\infty(M_1)$  such that $\partial c = (\Psi_{1})_*([M_1])$. It follows that 
$$\partial (H_*(c)) = H_*(\partial c) =  (\Psi_{2})_*f_*([M_1]) = \deg f\cdot (\Psi_{2})_*([M_2])$$
in the homology group of $N_{R_1+\varepsilon}(M_2)$. Consequently, $R_2 \leq R_1 + \varepsilon$. We conclude that $R_1 \geq R_2$ by letting $\varepsilon\to 0$.
\end{proof}
\end{lemma}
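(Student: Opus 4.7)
The plan is to construct a $1$-Lipschitz extension of $f$ between the ambient Banach spaces and then push a filling of $[M_1]$ forward to obtain a filling of $\deg(f)\cdot[M_2]$, which is as good as a filling of $[M_2]$ once $\deg(f)$ is invertible in $F$.

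First, I would compose with the Kuratowski embeddings: let $\Psi_i\colon M_i \hookrightarrow L^\infty(M_i)$ be the Kuratowski embeddings, and set $h := \Psi_2\circ f\colon M_1 \to L^\infty(M_2)$. Since $\Psi_2$ is an isometry and $f$ is distance-decreasing, $h$ is $1$-Lipschitz. The key construction is a $1$-Lipschitz extension $H\colon L^\infty(M_1)\to L^\infty(M_2)$ with $H\circ\Psi_1 = h$. The natural candidate is the McShane-type infimal-convolution formula
\[
(H(\varphi))(m_2) := \inf_{m_1\in M_1}\bigl((h(m_1))(m_2) + \|\varphi - \Psi_1(m_1)\|_\infty\bigr).
\]
Finiteness and the $1$-Lipschitz property in $\varphi$ follow from the triangle inequality and the standard estimate for differences of infima; the identity $H\circ\Psi_1 = h$ follows because the $1$-Lipschitz property of $h$ forces the infimum to be attained at $m_1 = m$ when $\varphi = \Psi_1(m)$. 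In particular, $H$ sends $N_R(\Psi_1(M_1))$ into $N_R(\Psi_2(M_2))$ for every $R>0$.

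With $H$ constructed, the rest is functoriality. Set $R_1 := \fillrad_F(M_1)$ and fix $\varepsilon > 0$. Pick a locally finite $(n+1)$-chain $c$ in $N_{R_1+\varepsilon}(\Psi_1(M_1))$ with $\partial c = (\Psi_1)_*[M_1]$. Applying $H_*$ and using $f_*[M_1] = \deg(f)\cdot[M_2]$,
\[
\partial H_*(c) \;=\; H_*(\partial c) \;=\; (\Psi_2)_*f_*[M_1] \;=\; \deg(f)\cdot(\Psi_2)_*[M_2]
\]
inside $H_n^{\lf}(N_{R_1+\varepsilon}(\Psi_2(M_2));F)$. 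Since $\deg(f)$ is invertible in the coefficient rings of primary interest (notably $\mathbb Q$), this realizes $(\Psi_2)_*[M_2]$ as a boundary, giving $\fillrad_F(M_2) \le R_1+\varepsilon$; letting $\varepsilon\to 0$ yields the claim.

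The only genuinely nontrivial step is the extension: verifying that $H$ is $1$-Lipschitz and satisfies $H\circ\Psi_1 = h$. Everything else reduces to the naturality of the boundary map on locally finite chains, and in the noncompact case the fact that the Lipschitz map $H$ preserves local finiteness of chains is automatic from its uniform continuity.
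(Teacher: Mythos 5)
Your proposal is correct and follows essentially the same route as the paper: extend $h=\Psi_2\circ f$ to a $1$-Lipschitz map $H\colon L^\infty(M_1)\to L^\infty(M_2)$ via the McShane infimal-convolution formula, observe $H(N_R(M_1))\subset N_R(M_2)$, and push a filling chain forward so that $\partial H_*(c)=\deg(f)\cdot(\Psi_2)_*[M_2]$. If anything you are slightly more careful than the paper, which silently passes from the vanishing of $\deg(f)\cdot(\Psi_2)_*[M_2]$ to that of $(\Psi_2)_*[M_2]$ without noting that this step uses invertibility of $\deg(f)$ in $F$ (true for $F=\mathbb{Q}$, which is what the rest of the paper actually uses).
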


The following lemma will be useful in the proof of our main theorem. More precisely, it will allow us to reduce the proof for the odd dimensional case of our main theorem to the even dimensional case. 
	\begin{lemma} \label{remark: fillrad_of_direct_roduct}
		 If $(M^n,g)$ is a complete Riemannian manifold, then
	$$\fillrad_{\mathbb{Q}}(M\times\R) =  \fillrad_{\mathbb{Q}}(M).$$
	\begin{proof}
		For any $R>0$,  we set
		$$X_1=\bigcup_{k\in\Z} N_R(M\times[4R\cdot 2k,4R(2k+2)]),$$
		$$X_2=\bigcup_{k\in\Z}N_R(M\times[4R(2k-1),4R(2k+1)]).$$
		Note that
	$$X_1\cap X_2=\bigcup_{k\in\Z} \left(N_R(M\times[4R(k-2),4Rk])\cap N_R(M\times[4Rk,4R(k+2)])\right).$$
	Let $i_1\colon X_1\hookrightarrow N_R(M\times\R)$, $i_2\colon X_2\hookrightarrow  N_R(M\times\R)$ and $(j_1,j_2)\colon X_1\cap X_2\hookrightarrow X_1\times X_2$ be the inclusion map. Note that each component of $X_1, X_2, X_1 \bigcap X_2$ is homotopic to $N_R(M)$ by a bounded homotopy. Thus the locally finite homology groups  of $X_1$, $X_2$, and $X_1\cap X_2$ are all isomorphic to $\prod_\Z H^{\lf}_*(N_R(M);\mathbb Q)$. For notational simplicity, we omit the coefficient $\mathbb Q$ in the rest of the proof. 
		
		%The intersections  of $X_1$, $X_2$ and $X_1\cap X_2$ are all within the regions that are bounded along $\R$-direction and disjoint. 
		
		Consider  the following exact sequence:
		$$\cdots \to H^{\lf}_n(X_1\cap X_2)\to H^{\lf}_n(X_1)\oplus H^{\lf}_n(X_2)\to H^{\lf}_n(N_R(M\times\R)).$$
		More explicitly, we have
		$$\prod_\Z H^{\lf}_n(N_R(M))\xlongrightarrow{(j_{1*},j_{2*})} \prod_\Z H^{\lf}_n(N_R(M))\oplus \prod_\Z H^{\lf}_n(N_R(M)) \xlongrightarrow{i_{1*}-i_{2*}} H^{\lf}_n(N_R(M\times\R)).$$
		Denote by  $[M\times \R]$ the fundamental class on $M\times\R$, which is explicitly given by
		$$[M\times \R]=\sum_{k\in\Z} [M]\times[4Rk,4R(k+2)].$$
		By the above exact sequence, the class $\Psi_{M\times\R}([M\times\R])\in H^{\lf}_n(N_R(M\times\R))$ lies in the image of the map $i_{1*} - i_{2*}$. More explicitly, we have
		$$\Psi_{M\times\R}([M\times\R])=i_{1*}\big(\prod_\Z \Psi_{M}([M])\big)-i_{2*}\big(\prod_\Z(-\Psi_{M}([M]))\big).$$
		Therefore, if $[M]$ vanishes in $H^{\lf}_*(N_R(M))$, then $\Psi_{M\times\R}([M\times\R])$ vanishes in $H^{\lf}_n(N_R(M\times\R))$ as well. Consequently,
		we have $$\fillrad_{\mathbb Q}(M\times\R)\leq \fillrad_{\mathbb Q}(M).$$
		We remark that so far this part of the proof works for the filling radius with arbitrary coefficients. 
		
		Conversely, if $\Psi_{M\times\R}([M\times\R]) = 0$, then 
		$$i_{1*}\big(\prod_\Z \Psi_{M}([M])\big)-i_{2*}\big(\prod_\Z(-\Psi_{M}([M]))\big) = \Psi_{M\times\R}([M\times\R]) = 0.$$
		It follows from the above exact sequence that there exist  $c_k\in H^{\lf}_*(N_R(M))$ for $k\in\Z$ such that
		$$j_{1*}(\prod_\Z c_k)=\prod_\Z\Psi_{M}([M]) \textup{ and }  j_{2*}(\prod_\Z c_k)=\prod_\Z(-\Psi_{M}([M])).$$
		Note that each component of  $X_1\cap X_2$ only intersects  with two components of $X_1$ and  $X_2$, respectively. Therefore, for each $k\in\Z$, we have
		$$\Psi_{M}([M])=c_k=-\Psi_{M}([M]),$$ which implies that $2\Psi_{M}([M])=0$. It follows that $\Psi_{M}([M])=0$, since we are working with the coefficient $\mathbb Q$. To summarize, we have shown that if $\Psi_{M\times\R}([M\times\R])$ vanishes in $H^{\lf}_n(N_R(M\times\R))$, then $[M]$ vanishes in $H^{\lf}_*(N_R(M))$.	Consequently, we have 	$$\fillrad_{\mathbb Q}(M\times\R)\geq \fillrad_{\mathbb Q}(M).  $$
		This together with the inequality 
		$$\fillrad_{\mathbb Q}(M\times\R)\leq \fillrad_{\mathbb Q}(M)$$
		from the first part  completes the proof. 
	\end{proof}
\end{lemma}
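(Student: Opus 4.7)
The plan is to run a Mayer--Vietoris argument in locally finite homology, decomposing $N_R(M\times\R)$ into overlapping disjoint unions of bounded-width slabs, each of whose connected components bounded-deformation-retracts onto a copy of $N_R(M)$. Concretely, for any $R>0$ I would take $X_1$ to be the disjoint union of slabs $N_R(M\times[4R\cdot 2k, 4R(2k+2)])$ over $k\in\Z$, and $X_2$ the analogous disjoint union with intervals shifted by $4R$; then $X_1\cup X_2 = N_R(M\times\R)$ and each component of $X_1$, $X_2$, and $X_1\cap X_2$ is homotopy equivalent, via a bounded homotopy, to $N_R(M)$. Consequently the locally finite homology of each of these three spaces is $\prod_\Z H^{\lf}_\ast(N_R(M); \mathbb Q)$.

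For the easy direction $\fillrad_{\mathbb Q}(M\times\R)\leq \fillrad_{\mathbb Q}(M)$, I would decompose the fundamental class as $[M\times\R]=\sum_k [M]\times[4Rk, 4R(k+2)]$ and check from the Mayer--Vietoris sequence that its Kuratowski image lies in the image of $i_{1\ast}-i_{2\ast}$, with preimages $\prod_\Z \Psi_M([M])$ and $\prod_\Z(-\Psi_M([M]))$ in the two summands. Thus if $\Psi_M([M])=0$ in $H^{\lf}_n(N_R(M); \mathbb Q)$, each preimage vanishes and $\Psi_{M\times\R}([M\times\R])=0$ follows. This half of the argument in fact works with arbitrary coefficients.

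For the reverse inequality I would assume $\Psi_{M\times\R}([M\times\R])=0$ and invoke exactness: there must exist classes $(c_k)\in \prod_\Z H^{\lf}_n(N_R(M); \mathbb Q)$ with $j_{1\ast}((c_k))=\prod_\Z \Psi_M([M])$ and $j_{2\ast}((c_k))=\prod_\Z(-\Psi_M([M]))$. The key combinatorial observation is that each component of $X_1\cap X_2$ meets exactly one component of $X_1$ and one component of $X_2$, so for each $k$ the corresponding entry satisfies $c_k=\Psi_M([M])=-\Psi_M([M])$, giving $2\Psi_M([M])=0$.

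The only genuine obstacle is to pass from $2\Psi_M([M])=0$ to $\Psi_M([M])=0$, and this is precisely where the rational-coefficient hypothesis enters: over $\mathbb Q$ we may divide by $2$ to conclude $\Psi_M([M])=0$ and hence $\fillrad_{\mathbb Q}(M)\leq R$. Over $\Z$ this final step fails, which is consistent with the lemma being stated over $\mathbb Q$; more generally the argument goes through over any coefficient ring in which $2$ is invertible.
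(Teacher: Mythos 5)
Your proposal is correct and follows essentially the same route as the paper's proof: the same overlapping-slab decomposition of $N_R(M\times\R)$ indexed by $\Z$, the same identification of the locally finite homology of $X_1$, $X_2$, $X_1\cap X_2$ with $\prod_\Z H^{\lf}_\ast(N_R(M);\Q)$, the same Mayer--Vietoris argument with the explicit preimages $\prod_\Z \Psi_M([M])$ and $\prod_\Z(-\Psi_M([M]))$, and the same concluding observation that $2\Psi_M([M])=0$ forces $\Psi_M([M])=0$ over $\Q$. You also correctly note, as the paper does, that the inequality $\fillrad_{\Q}(M\times\R)\leq\fillrad_{\Q}(M)$ holds over arbitrary coefficients while the reverse direction genuinely needs $2$ to be invertible.
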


\begin{remark}
	For the filling radius with $\mathbb Z_2$ coefficients, Gromov proved that  
	\begin{equation} 
		\displaystyle
		\fillrad_{\mathbb{Z}_2}(M_1 \times M_2) = \min\{\fillrad_{\mathbb{Z}_2}(M_1), \fillrad_{\mathbb{Z}_2}(M_2)\}.
	\end{equation}
	for any two Riemannian manifolds $(M_1, g_1)$ and $(M_2, g_2)$ \cite{gromovfilling}.
%For more studies and related questions on  the filling radius, we refer to \cite{gromovfilling, Gromovadozen} and the literature therein. 
\end{remark}

The following example shows that  the filling radius of the universal cover of an aspherical manifolds is $\infty$.

\begin{example}\label{example: ashperical_filling}
	Let $(M^n,g)$ be a closed aspherical manifold, then the universal cover $\widetilde{M}$ of $M$ has $\fillrad_{\mathbb{Q}}(\widetilde{M}) = \infty$.
	\begin{proof}
		We begin by proving $\widetilde M$ is uniformly contractible, that is,  for any  $r >0$, there exists $R=R(r) \geq r$ such that $B(y,r)$ is contractible in $B(y, R)$ for any $y \in M$. Fix $x_0\in \widetilde M$. Let $d = \diam(M)$. Then,   for each $B(y, r)$,  there exists $g\in \pi_1(M)$ such that $g B(y, r) \subset B(x_0, d+2r)$. Note that $M$ admits a finite $CW$ complex structure, which naturally lifts to a $CW$ complex structure on $\widetilde M$. The ball $B(x_0, d+2r)$ is contained in some \emph{finite} $CW$ subcomplex $C$ of $\widetilde M$. Since $\widetilde M$ is contractible, it follows by induction on the dimensions of cells that each cell in $C$ contracts to a single point in $B(x_0, d+2r+s)$ for some $s>0$. Consequently, we have shown that $B(y,r)$ is contractible in $B(y,R)$ with $R(r)=d+2r+s$.
		
		Now let us  show that $\fillrad_\mathbb{Q}(\widetilde{M}) = \infty$. Assume to the contrary that $\fillrad_\mathbb{Q}(\widetilde{M}) <\infty$, then there exist $r >0$ and  an $(n+1)$ dimension simplicial complex $P$ such that $\partial P = \widetilde M$ and 
		\[ \widetilde{M} \subset P \subset U_{r}(\widetilde{M})\subset L^\infty(\widetilde{M}). \]
		However, since $\widetilde M$ is uniformly contractible, we see that the identity map $i\colon \widetilde M \to \widetilde M$ extends to a continous map $\varphi\colon P \to \widetilde M$ (see for example the proof of Theorem \ref{thm: inj_filling} for more details). It follows that $[\widetilde M] = [\partial P] = 0$ in $H^{\lf}_n(\widetilde M)$. This contradicts the fact $[\widetilde M]$ is the fundamental class and does not vanish in $H^{\lf}_n(\widetilde M)$. Therefore, we have shown that $\fillrad_\mathbb{Q}(\widetilde{M}) = \infty$. This finishes the proof.
	\end{proof}
\end{example}

\subsection{Filling radius and coarse geometry}

In this subsection, we review some basic results in coarse geometry and their connections to filling radius. A discrete metric space $\Gamma$ is said to be locally finite if every metric ball of finite radius in $\Gamma$ contains only finitely many elements of $\Gamma$. 

\begin{definition}[{cf. \cite{Roecoarse}}] Suppose $\Gamma$ is a locally finite discrete metric space. 
	Given any $d >0$, the Rips complex $P_d(\Gamma)$ is a simplical complex given as follows:  the set of vertices is $\Gamma$ and a finite subset $\{x_1, \cdots, x_n\} \subset \Gamma$ spans a simplex if and only $d(x_i, x_j) \leq d$ for all $1 \leq  i, j \leq n$. We define  the $k$-th coarse homology group $HX_k(\Gamma; F)$ of $\Gamma$ with coefficients in $F$ to be 
	\begin{equation}
		HX_k(\Gamma; F) = \lim_{d \rightarrow \infty} H_k^{\lf}(P_d(\Gamma); F).
	\end{equation}
\end{definition}

Now suppose  $(M^n, g)$ is a complete  Riemannian manifold with bounded geometry, where the latter condition means that  $(M^n, g)$ has uniformly bounded curvature and positive injectivity radius. A subspace  $\Gamma \subset M$  is said to be a net in $M$ if there exists $\delta > 0$ and $c >0$ such that
\begin{itemize}
	\item $d_g(x,y) \geq c$ for any distinct $x ,y \in \Gamma$;
	\item $d_g(x, \Gamma) \leq \delta $ for any $x \in M$.
\end{itemize}
Since $M$ has bounded geometry, there exists a triangulation of $M$ such that for each $r>0$,  the number of simplices in the metric ball $B(x, r)$ of radius $r$ is uniformly bounded for all $x\in M$. In particular, the set of vertices of this triangulation gives a net of $M$. 

We define the coarse homology groups of  Riemannian manifolds as follows.
\begin{definition}
	Suppose that $(M^n,g)$ is a complete Riemannian manifold with bounded geometry and $\Gamma$ is a net in $M$.  The $k$-th coarse homology group $HX_k(M;  F)$ of $M$ with coefficients in $F$ is defined to be $HX_k(\Gamma; F)$ for each $k \in \mathbb{N}$.
\end{definition}

 With $(M^n, g)$ and $\Gamma$ as above, then there  exists $d_0 > 0$ such that $M \subset P_{d_0}(\Gamma)$. Let $j\colon  M \rightarrow P_{d_0}(\Gamma)$ be the inclusion map. Then $j$ naturally induces a homomorphism 
$$j_*\colon  H_k^{\lf}(M; F) \rightarrow HX_k(M; F).$$

\begin{definition}
	Suppose that $(M^n,g)$ is a complete Riemannian manifold with bounded geometry. Let $[M] \in H_{n}^{\lf}(M; \mathbb{Q})$ be the fundamental class of $M$. We say $M$ is  rationally macroscopically large if $j_*([M]) \neq 0$ in $HX_n(M; \mathbb{Q})$. Otherwise, we say $M$ is  rationally macroscopically small. Similarly, by working with $\mathbb Z$ coefficients, we may define integral macroscopical largeness  and integral macroscopical smallness. 
\end{definition}

	The coarse $K$-homology $KX_\ast(M)$ of $M$ is defined to be 
	 \[ KX_\ast(M) = \lim_{d \rightarrow \infty} K_\ast(P_d(\Gamma)).\]  The Connes-Chern character map induces the following rational isomorphism\footnote{This isomorphism can for example be proved by a Mayer--Vietoris sequence argument.} between (locally finite) $K$-homology and locally finite homology theory:
\begin{equation*}
    K_0(M) \otimes\mathbb Q \cong \oplus_{n=0}^\infty  H^{\lf}_{2n}(M; \mathbb{Q})\textup{ and } \ K_1(M) \otimes\mathbb Q \cong  \oplus_{n=0}^\infty  H^{\lf}_{2n+1}(M; \mathbb{Q})
\end{equation*}
and similarly 
\[   KX_0(M) \otimes\mathbb Q \cong \oplus_{n=0}^\infty  HX_{2n}(M; \mathbb{Q})\textup{ and } \ KX_1(M) \otimes\mathbb Q \cong  \oplus_{n=0}^\infty  HX_{2n+1}(M; \mathbb{Q}). \]
In particular, if $M$ is spin and the $K$-homology class of its Dirac operator vanishes in $KX_\ast(M)$, then $M$ is rationally macroscopically small. This gives a useful $K$-homological  way to decide when $M$ is \emph{rationally} macroscopically  large or small via the techniques developed in the context of  the coarse Novikov conjecture.  We would like to remark that at the moment it is still unclear whether there is a similar $K$-homological approach to determining when $M$ is \emph{integrally} macroscopically  large or small. In fact, there  exist manifolds $(M^n,g)$ with $\fillrad(M) = \infty, \width_{n-1}(M) = \infty$ but $\fillrad_{\mathbb{Q}}(M) < \infty$ \cite{MR2893542}. Here $\width_{n-1}(M)$ is the $(n-1)$-th Uryson width of $M$. 
	See Definition \ref{def: width} for  the precise definition of Uryson width $\width_k(M)$ and its connection with  the filling radius. 

We conclude the section with the following theorem of   Gong and the third author  \cite[Theorem 4.1]{Gong_Yu_volume_growth_psc}. 
\begin{theorem}[{\cite[Theorem 4.1]{Gong_Yu_volume_growth_psc}}] \label{gong_yu_psc}
	Suppose that $(M^n,g)$ is a complete Riemannian manifold with uniformly positive scalar curvature, bounded geometry and sub-exponential volume growth, we have
	\[\fillrad_{\mathbb{Q}}(M,g) < \infty.\]
\end{theorem}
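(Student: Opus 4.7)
The plan is to prove Theorem \ref{gong_yu_psc} by contradiction, adapting the Dirac operator/coarse index strategy that is sketched in the paper for Theorem \ref{thm:main} but replacing the finite asymptotic dimension input with the sub-exponential volume growth hypothesis. Suppose, toward a contradiction, that $\fillrad_\mathbb{Q}(M,g)=\infty$. The first goal is to translate this into a $K$-theoretic statement. By Lemma \ref{remark: fillrad_of_direct_roduct} we may pass to $M\times\mathbb{R}$ if necessary to reduce to the even-dimensional case without changing the $\mathbb{Q}$-filling radius and without spoiling sub-exponential volume growth, bounded geometry, or positive scalar curvature. Fix a net $\Gamma\subset M$ and compare the embeddings $M\hookrightarrow P_d(\Gamma)$ with the Kuratowski embedding to see that infinite $\mathbb{Q}$-filling radius forces $j_\ast[M]\neq 0$ in $HX_n(M;\mathbb{Q})$, i.e.\ $M$ is rationally macroscopically large. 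The Connes--Chern character isomorphism recalled after the definition of $KX_\ast$ then promotes this to a nonzero rational image of the spin Dirac $K$-homology class $[D]$ in $KX_\ast(M)\otimes\mathbb{Q}$.

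The second and most delicate step is to extract a nonzero numerical invariant from $[D]$ using the sub-exponential growth hypothesis. Bounded geometry ensures that the net $\Gamma$ inherits sub-exponential growth, and this in turn guarantees the existence of a Følner sequence $\{F_k\}\subset\Gamma$, i.e.\ finite subsets with $|N_r(F_k)\setminus F_k|/|F_k|\to 0$ for every fixed $r>0$. I would then define a finite trace $\tau$ on a dense bounded-propagation subalgebra of the Roe algebra $C^\ast(M)$ by averaging diagonal matrix coefficients over $F_k$ and passing to a limit along a free ultrafilter on $\mathbb{N}$; the Følner condition ensures that $\tau$ is tracial on bounded-propagation operators and extends continuously in operator norm. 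Next, I would apply a local index theorem for the Dirac operator in the style of Roe's open-manifold index theorem, or equivalently the trace formula of Gong--Yu for amenable exhaustions, to identify $\tau\big(\mathrm{Ind}(D)\big)$ with the ultralimit of normalized integrals $|F_k|^{-1}\int_{U_k}\widehat{A}(M)$, where $U_k\subset M$ is a bounded-geometry thickening of $F_k$. Rational macroscopic largeness of $M$ together with a Chern character argument then guarantees that this numerical limit is nonzero for a suitable choice of auxiliary bundle or twisting class pulled back from the coarsening map.

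The third step closes the contradiction via the Lichnerowicz formula $D^2=\nabla^\ast\nabla+\Sc_g/4\geq\sigma^2/4$. Positive scalar curvature bounded below by $\sigma^2$ produces a uniform spectral gap at zero, which implies that the normalized phase of $D$ is invertible modulo $C^\ast(M)$, so its higher index $\mathrm{Ind}(D)\in K_0(C^\ast(M))$ vanishes; consequently $\tau(\mathrm{Ind}(D))=0$, contradicting step two. The main obstacle is step two: one must simultaneously show that the averaging procedure gives a well-defined trace on a subalgebra large enough to contain $\mathrm{Ind}(D)$, and that this trace pairs nontrivially with the coarse fundamental class. Sub-exponential volume growth is used in an essential way precisely here, since the Følner convergence underlying both the traciality of $\tau$ and the convergence of the averaged $\widehat{A}$-integrals fails as soon as one permits exponential growth; by contrast, bounded geometry is needed only to control the geometry of the averaging regions $U_k$ and to guarantee uniform estimates in the local index computation.
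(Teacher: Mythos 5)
Your overall strategy is in the right family for this result: sub-exponential growth supplies F{\o}lner exhaustions, which give traces on (a suitable variant of) the Roe algebra, and the Lichnerowicz formula kills the coarse index. Step one of your outline, translating $\fillrad_\mathbb{Q}(M)=\infty$ into $j_\ast[M]\neq 0$ in $HX_n(M;\mathbb{Q})$ via a comparison of Rips complexes with $L^\infty$-neighborhoods under bounded geometry, is correct in spirit. However, the crux of your argument, step three, contains a genuine gap that you do not address. You assert that rational macroscopic largeness, via a Chern character argument, produces a twisting bundle for which the ultralimit of $|F_k|^{-1}\int_{U_k}\mathrm{ch}(E)\wedge\widehat{A}(M)$ is nonzero. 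But any $K$-theory class that detects $j_\ast[M]$ inside a Rips complex $P_d(\Gamma)$ is necessarily a \emph{compactly supported} class (it lies in $K^\ast_c(P_d(\Gamma))$, which pairs with $H^{\lf}_n$), and hence its pullback to $M$ is trivial outside a compact set. The F{\o}lner-averaged integral of any compactly supported density is forced to zero, precisely because $|F_k|\to\infty$. On the other hand, the untwisted density $\widehat{A}_n(M)$ can itself average to zero (it already vanishes identically on any Ricci-flat metric, which have sub-exponential growth). So a single F{\o}lner trace does not detect macroscopic largeness, and the contradiction you want in step four never materializes as written.

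There is also a circularity: your claim to "apply a local index theorem in the style of \ldots the trace formula of Gong--Yu for amenable exhaustions" invokes precisely the content of the theorem you are trying to prove, which cannot stand as an independent argument. A correct account of the Gong--Yu mechanism must explain how the \emph{non-vanishing of the coarse fundamental class}, not just the amenability trace, is made to pair nontrivially with the index in the presence of the spectral gap; this requires more than a single F{\o}lner-averaged trace (for instance, higher cyclic cocycles or a more careful choice of algebra on which both the trace and the index representative live), and that content is missing from your sketch. Finally, a minor point: the statement in the paper omits the spin hypothesis, but your proof and Gong--Yu's require $M$ spin (or at least spin$^c$ with controlled line bundle curvature) for the Lichnerowicz step; it is worth making this explicit.
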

This result provides  a qualitative characterization of the filling radii for a class of  Riemannian manifolds with uniformly positive scalar curvature. The main purpose of the present paper is to prove a \emph{quantitative} characterization of the filling radii for a large class of Riemannian manifolds with uniformly positive scalar curvature in the sense of Conjecture \ref{conj: gromov_filling_radius_conjecture}.

\section{Lipschitz properties of spaces with finite asymptotic dimension} \label{section: asym}

In this section, we recall the notion of asymptotic dimension. We also review the Lipschitz controlled $K$-theory from \cite{Wang:2021uj}, which is a key ingredient for our proof to the main theorem of the paper. 

\begin{definition}\label{def: cover_nerve}
	Suppose that $(M,g)$ is a complete Riemannian manifold and $\mathcal{C} = \{U_i\}_{i \in I}$ is an open cover of $M$. The nerve $\mathcal{N}$ of $M$ is  a simplicial complex with one vertex $v_i$ for each open set $U_i$ with the property: given any $k$ tuple of vertices $v_{i_0}, \cdots \cdot, v_{i_k}$, there exists a simplex in the nerve with these given vertices if and only if $\displaystyle \cap_{j=0}^k U_{i_j}$ is non-empty. Moreover, the cover $\mathcal{C}$ is said to have multiplicity $\leq m+1$ if any point $p \in M$, it is contained in at most $(m+1)$ distinct members $U_i$ in $\mathcal{C}$. The diameter of the cover $\mathcal{C}$ is defined as
	\[\displaystyle \diam(\mathcal{C}) = \sup_{i \in I} \diam(U_i).\]
	A cover  $\mathcal{C}$ of $(M,g)$ is said to be uniformly bounded if $diam(\mathcal{C}) < \infty$.
\end{definition}

  We say an open cover $\mathcal{C}$ of $(M,g)$ has Lebesgue number at least $r$ if for $\forall p\in M$, the metric ball $B(p,r)$ of radius $r$ centered at $p$ is contained in some member of  the cover $\mathcal{C}$. For each member $U_i$ of $\mathcal C$, let $U_i^r \coloneqq N_r(U_i)  = \{x\in M: \dist(x, U_i) < r \}$.  Then $\mathcal{C}_r= \{ U_i^r\}_{i\in I} $ is a new open cover of $(M, g)$. 
Note that $\mathcal{C}_r$ satisfies
\[\diam(\mathcal{C}_r) \leq \diam(\mathcal{C}) + 2r,\]
and has Lebesgue number $ \geq r$. If we want to  emphasize the dependence on the Lebesgue number, we will denote by $\mathcal{N}_r$ the nerve of $(M,g)$ associated with the cover $\mathcal{C}_r$ of $(M,g)$. Clearly, all the above definitions apply to any metric space $(X,d)$. A metric space $(X,d)$ is said to  have Lebesgue dimension $\leq m$ if any open cover $\mathcal{C}$ of $(X,d)$ has a refinement $\mathcal{C}^\prime$ with multiplicity $m+1$. The Lebesgue dimension of $X$ is defined to be the minimum of such integer $m$. Now the notion of asymptotic dimension can be view as  a large scale version of the Lebesgue dimension.

\begin{definition} \label{def: finite_asymptotic_dimension}
	A metric space $(X,d)$ is said to have asymptotic dimension $m$ if $m$ is the minimal integer such that for any $R>0$, $X$ admits a uniformly bounded cover $\mathcal{C} = \{U_i\}_{i \in I}$ with $R$-multiplicity at most $m+1$. Here $R$-multiplicity means that  any ball $B(p,R) = \{x \in X :  d(x,p) < R\}$ intersects with at most $m+1$ members of  $\mathcal{C}$.
\end{definition}
Now we let $\mathcal{C} = \{U_i\}_{i \in I}$  be a uniformly bounded cover  of $(X, d)$ with $\diam(\mathcal{C})=R$ and $r$-multiplicity at most $m+1$ for any given $r > 0$. Let $\mathcal{N}_r$ be the nerve of $(X,d)$ associated with the cover $\mathcal{C}_r = \{U_i^r\}_{i \in I}$ of $(X,d)$ with Lebesgue number at least $r$. We equip $\mathcal{N}_r$ with the standard simplicial metric $d_{\mathcal{N}_r}$, which is equal to the standard $l^1$-metric $d_{l^1}$ on each simplex and, for two points in different simplicies, is defined to the distance of the shortest path connecting the two points. We define a map $f_r \colon  X \rightarrow \mathcal{N}_r$ by
\begin{equation}\label{eq: map_to_nerve}
	\displaystyle f_r(x) = \frac{\Sigma_{i \in I} d(x, X - U_i^r)U_i^r}{\Sigma_{i \in I} d(x, X - U_i^r)}.
\end{equation}
%where $\displaystyle d_g(p,q) = \inf_{\gamma} \{\int_p^q |\gamma^\prime|: \ \gamma \text{\ is the path connecting $p$ with $q$}\}$.
Now suppose $(X, d)$ has asymptotic dimension $m$. For each $r>0$, let $\mathcal N_r$ be the nerve complex associated with the cover $\mathcal C_r$ from above. Then the map $f_r$ from line \eqref{eq: map_to_nerve} satisfies the following properties: 
\begin{enumerate} 
	\item $f_r$ is $\frac{(m+1)^2}{r}$-Lipschitz;
	\item $f_r$ is uniformly cobounded. More precisely, for any given bounded subset $K \subset \mathcal{N}_r$ with diameter $\diam(K) \leq d$, then $\diam(f^{-1}(K)) \leq Rd+2(R+2r)$, where $R = \diam(\mathcal C)$.
\end{enumerate}
Indeed, consider the function $ f_{r,i}\colon X \to \mathbb R$ given by 
$$f_{r,i} (x)= \frac{d(x, X-U_i^r)}{\Sigma_{i \in I} d(x, X - U_i^r)}$$
Since the Lebesgue number of $\mathcal C_r$ is $\geq r$, we have 
$$\displaystyle \Sigma_{i \in I} d(\cdot, X - U_i^r) \geq r,  \text{ and the Lipschitz constant of } d(\cdot , X-U_i^r) \leq 1. $$ 
Hence, a direct computation shows  that 
$$\lip(f_{r,i}) \leq \frac{m+1}{r}.$$ Consequently,  $$\lip(f_r) \leq \frac{(m+1)^2}{r}.$$
Note that $\diam(f^{-1}(\sigma)) \leq 2(R+2r)$ for any simplex $\sigma \in \mathcal{N}_r$. Hence,  for any $K \subset \mathcal{N}_r$ with $\diam(K) \leq d$, we have  $$\diam(f^{-1}(K)) \leq Rd + 2(R+2r).$$

\begin{remark}\label{rmk:asysmptotic} The above discussion shows that if $(X, d)$ has asymptotic dimension $m$, then for any given $\varepsilon > 0$, there exists an $m$-dimensional locally finite simplicial complex $P^m$ and a Lipschitz map $f\colon M \rightarrow P^m $ such that  $\lip(f) \leq \varepsilon$. The latter in fact can be shown to be equivalent to the definition of finite asymptotic dimension. See the survey \cite{MR2725304} for more details. 
\end{remark}
\begin{comment}
\begin{proposition}\label{prop: map_to_nerve}
		Let $(X, d)$ be a metric space with finite asymptotic dimension $m$. Then the following hold. 
	\begin{enumerate}
		\item \label{item: char_lebesgue_number}For each $r>0$, there exists a uniformly bounded cover such that $\mathcal{C}_r = \{V_i\}_{i \in I}$ with Lebesgue number $\geq r$ and multiplicity at most $m+1$. 
		\item  \label{item: char_small_lip}
		For each $r>0$, there exists an $m$-dimensional simplicial complex $\mathcal{N}_r$ with $\diam(\mathcal{N}_r) \leq R$, and a map $f_r$  with the following properties:
		
		\begin{itemize} 
			\item $f_r$ is $\frac{(m+1)^2}{r}$-Lipschitz;
			\item $f_r$ is uniformly cobounded. More precisely, for any given bounded subset $K \subset \mathcal{N}_r$ with diameter $\diam(K) \leq d$, then $\diam(f^{-1}(K)) \leq Rd+2(R+2r)$.
		\end{itemize}

		\item \label{item: epsilon} For any $\varepsilon > 0$, there exist an $m$-dimensional locally finite simplicial complex $P^m$ and a Lipschitz map $f\colon M \rightarrow P^m $ such that  $\lip(f) \leq \varepsilon$.
	\end{enumerate}
\end{proposition}
\end{comment}

\begin{definition}\label{def:asydimControl}
	Suppose that $\mathcal{D}\colon  \mathbb{R}_{\geq 0} \rightarrow  \mathbb{R}_{\geq 0}$ with $\mathcal{D}(s) \geq s$ and $(X,d)$ is a metric space. We say $X$  has asymptotic dimension $\leq m$ with diameter control $\mathcal{D}$ if for any $r >0$, $X$ admits a uniformly bounded cover $\mathcal{C}_r = \{U_i\}_{i\in I}$ with $r$-multiplicity at most $m+1$ and $\diam(\mathcal{C}_r) \leq \mathcal{D}(r)$. 
\end{definition}

 Next, we will construct a left homotopy inverse $g_r$ of the map $f_r\colon  X \rightarrow \mathcal{N}_r$ in $L^\infty(X)$, where $f_r$ is defined from  line \eqref{eq: map_to_nerve}. More precisely, we prove that 
\begin{lemma}\label{lemma: homotopy_equivalent}
	Suppose that $(X^n,g)$ is a complete, non-compact, Riemannian manifold with bounded geometry and finite asymptotic dimension. For each $r>0$, let $f_r\colon X\to \mathcal N_r$ be the map from line \eqref{eq: map_to_nerve}. Then there exists a continuous map  $g_r\colon \mathcal{N}_r \rightarrow  L^\infty(X)$ such that the following diagram commutes up to homotopy within $ N_{\mathcal D(r)}(X)$:
	$$\begin{tikzcd}
		X \arrow[r, "i", hook] \arrow[rd, "f_r"'] & L^\infty(X) \\
		& \mathcal N_r \arrow[u, "g_r"'] &         
	\end{tikzcd}$$
\end{lemma}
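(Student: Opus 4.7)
The plan is to build $g_r$ by sending each vertex of $\mathcal N_r$ to the Kuratowski image of a chosen point in the corresponding open set, then extending affinely across each simplex using the linear structure of $L^\infty(X)$, and finally to realize the homotopy $g_r\circ f_r \simeq i$ by the straight-line homotopy in $L^\infty(X)$.

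More precisely, for each vertex $v_i$ of $\mathcal N_r$ (corresponding to $U_i^r$), I would fix a point $x_i\in U_i$ and define $g_r(v_i) := \Psi(x_i)\in L^\infty(X)$. Since every simplex $\sigma=[v_{i_0},\dots,v_{i_k}]$ of $\mathcal N_r$ comes from a non-empty intersection $U_{i_0}^r\cap\dots\cap U_{i_k}^r$, I would extend $g_r$ over $\sigma$ by the affine map
\[
g_r\!\left(\sum_j t_j v_{i_j}\right) \;=\; \sum_j t_j \,\Psi(x_{i_j}), \qquad t_j\ge 0,\; \sum_j t_j =1.
\]
This is well defined on faces and yields a continuous map $g_r\colon \mathcal N_r\to L^\infty(X)$. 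A first sanity check is that the image lies in $N_{\mathcal D(r)}(X)$: picking any $y\in U_{i_0}^r\cap\dots\cap U_{i_k}^r$, each $x_{i_j}$ belongs to $U_{i_j}\subset U_{i_j}^r$, and since $\diam(U_{i_j}^r)\le \mathcal D(r)$, one has $\|\Psi(x_{i_j})-\Psi(y)\|_\infty=d(x_{i_j},y)\le \mathcal D(r)$, so the convex combination is within $\mathcal D(r)$ of $\Psi(y)$.

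For the homotopy I would take the straight-line interpolation
\[
H\colon X\times[0,1]\to L^\infty(X), \qquad H(x,t)=(1-t)\,\Psi(x)+t\,g_r(f_r(x)).
\]
The key estimate is that for every $x\in X$, writing $f_r(x)=\sum_j t_j v_{i_j}$ in the carrier simplex, each $x_{i_j}$ lies in $U_{i_j}\subset U_{i_j}^r$ while $x\in U_{i_j}^r$ as well, so $d(x,x_{i_j})\le \diam(U_{i_j}^r)\le \mathcal D(r)$. Consequently
\[
\|g_r(f_r(x))-\Psi(x)\|_\infty \;\le\; \sum_j t_j\,d(x,x_{i_j}) \;\le\; \mathcal D(r),
\]
and therefore $\|H(x,t)-\Psi(x)\|_\infty\le t\,\mathcal D(r)\le \mathcal D(r)$, which shows $H(x,t)\in N_{\mathcal D(r)}(X)$ for all $(x,t)$.

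I do not expect any serious obstacle here; the essential content is just bookkeeping with the uniform bound $\diam(\mathcal C_r)\le \mathcal D(r)$. The only mildly delicate point is ensuring that $g_r$ is continuous across faces of $\mathcal N_r$, which is automatic once the affine extension is defined using the same value $\Psi(x_i)$ at each vertex $v_i$ regardless of which simplex is being considered; since $L^\infty(X)$ is a vector space this affine extension is canonical and continuous. All other items (image of $g_r$ inside $N_{\mathcal D(r)}(X)$ and the straight-line homotopy staying inside $N_{\mathcal D(r)}(X)$) reduce to the same diameter estimate above.
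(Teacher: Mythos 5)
Your proof is correct and takes essentially the same approach as the paper: choose a point of the covering set for each vertex, extend affinely over simplices using the linear structure of $L^\infty(X)$, and obtain the homotopy by straight-line interpolation. The diameter estimate you use, namely $d(x,x_{i_j}) \le \diam(U_{i_j}^r) \le \mathcal{D}(r)$ whenever both $x$ and $x_{i_j}$ lie in $U_{i_j}^r$, is exactly the one in the paper.
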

	\begin{proof}
		 For any vertex $U$ of $\mathcal{N}_r$, we associate $U$ with a specific point $p_U \in U \subset X  \subset L^\infty(X)$. This defines a map from the $0$-skeleton of $\mathcal{N}_r$ to $L^\infty(X)$, which we denote by $g_r^0$.

	For any point $y \in \mathcal{N}_r$ that lies in the simplex generated by ${U_0, U_1, \cdots , U_m}$, there are  non-negative numbers $a_i$ with $\sum_{i = 0}^{m} a_i = 1$ such that
		$$ y  = \sum_{i=0}^{m} a_i U_i.$$ 
We define the map $g_r\colon \mathcal{N}_r \rightarrow L^\infty(X)$ by setting
		$$g_r(y) = \sum_{i=1}^m a_i g_r^{0}(U_i).$$

Each point $p\in X$ is contained in finitely many open sets, say,  $V_0, \cdots, V_l$,  with $l\leq m$. Thus there exist non-negative $b_0, \cdots, b_l$ with $\sum_{i=0}^l b_i = 1$ such that
		 $$ f_r(p) = \sum_{i=1}^l b_i V_i. $$
It follows that 
		\[
		\dist(g_r(f_r(p)), i(p)) = \|\sum_{i=0}^l b_i g_r(V_i) - i(p)\| \leq \sum_{i=0}^lb_i\|g_r(V_i) - i(p)\| \leq \mathcal{D}(r).
		\]
	It follows that $\im(g_r\circ f_r) \subset N_{\mathcal{D}(r)}(X) \subset L^\infty(X)$. Clearly, $g_r \circ f_r$ is homotopy equivalent to $i$ by a linear homotopy  within $N_{\mathcal{D}(r)}(X) \subset L^\infty(X)$.
\end{proof}

%\begin{remark}\textcolor{red}{This is not true}
	%As a byproduct of Lemma \ref{lemma:fillrad_equivalent}, if the $(M,g)$ is not rationally macroscopic large, then there exists a cover $\mathcal{C} = \{U_i\}_{i \in I} $ of $(M,g)$ such that\begin{equation}	\fillrad_{\mathbb{Q}}(M) \leq 2 diam(\mathcal{C}).	\end{equation}Here, $\diam(\mathcal{C}) = \sup_{i \in I} \diam(U_i)$. Hence,  it is essential to studying the cover for our main theorem.\end{remark}

%Hence, combined with the introduction above, we reach that, for any rationally macroscopic large, Riemannian manifold $(M^n,g)$ with finite asymptotic dimension $m$ and diameter control function $\mathcal{D}$,\[\fillrad_{\mathbb{Q}}(M,g) \leq 2\mathcal{D}(r).\]

\vspace{2mm}

 Finally, let us  review the Lipschitz controlled  quantitative $K$-groups from \cite{Wang:2021uj}. Let $(X,d)$ be a locally compact metric space, and $C_0(X)$ denote the algebra of continuous functions on $X$ that vanish at infinity, equipped with the supremum norm. An element $f$ in $M_k(C_0(X))$ is called $L$-Lipschitz if
\[\displaystyle \|f(x) - f(y)\| \leq L\cdot d(x,y), \quad \forall x ,y \in X,\]
where the norm on the left is the operator norm of matrices. Let us denote 
\begin{itemize}
	\item by $C_0(X)_L$  the collection of $L$-Lipschitz functions in $C_0(X)$.
	\item by $M_k(C_0(X))_L$ the collection of $L$-Lipschitz elements in $M_k(C_0(X))$.
\end{itemize}

 It is  clear that $\{C_0(X)_L\}_{L \geq 0}$ gives  a filtration of $C_0(X)$. If $X$ is non-compact, let $C_0(X)^+$ be the unitization of $C_0(X)$, that is, $C_0(X)^+$ is the algebra of continuous functions on the one-point compactification $X\cup\{\ast\}$  of $X$. And we define  
 \begin{equation}\label{eq:evaluation} \pi\colon  C_0(X)^+ \rightarrow \mathbb{C} 
 \end{equation}
to be the evaluation map at the point $\ast$.  

 For any given $L \geq 0$, we define  $P_k^L(C_0(X)^+)$ to be  the set of projections in $M_k(C_0(X)^+)_L$ with the inclusion
 \begin{equation}
	 P_k^L(C_0(X)^+) \rightarrow P_{k+1}^L(C_0(X)^+), p \mapsto \begin{pmatrix}
		 p&0\\
		 0&0
	 \end{pmatrix}.
 \end{equation}
 Similarly, we define $U_k^L(C_0(X)^+)$ to be the set of unitaries in $M_k(C_0(X)^+)_L$ with the inclusion
 \begin{equation}
	 U_k^L(C_0(X)^+) \rightarrow U_{k+1}^L(C_0(X)^+), u \mapsto \begin{pmatrix}
		 u&0\\
		 0&1
	 \end{pmatrix}.
 \end{equation}
We set 
 \begin{equation}
	P^L(C_0(X)^+) = \lim_{k\to \infty} P_k^L(C_0(X)^+) \textup{ and }  U^L(C_0(X)^+) = \lim_{k\to \infty} U_k^L(C_0(X)^+)
\end{equation}
under the natural inclusions from above.

\begin{definition}
The Lipschitz-controlled quantitative $K$-groups of $X$ are defined  as follows. 
\begin{enumerate}
	\item Let $(p,s) $ and $(q,t) \in P^L(C_0(X)^+) \times \mathbb{N}$.  We say $(p,s) \sim (q,t) $ if $p\oplus I_{j+t}$ and $q\oplus I_{j+s}$ are homotopic in $P^{2L}(C_0(X)^+)$ for some $j \in \mathbb{N}$, where $I_{j+t}$ is the $(j+t)\times (j+t)$ identity matrix.   We define 
	\begin{equation*}
		K_0^L(C_0(X)) : = \{(p,\ell) \in P^L(C_0(X)^+): \rank(\pi(p)) = \ell\}/ \sim,
	\end{equation*}
	where $\pi$ is the evalutation map from line \eqref{eq:evaluation}. 
	\item Let $u$ and $v \in U^L(C_0(X)^+)$. We say $u \sim v$ if $u$ and $v$ are homotopic in $U^{2L}(C_0(X)^+)$. We define 
    \begin{equation*}
		K_1^L(C_0(X)) : = U^L(C_0(X)^+)/ \sim.
	\end{equation*}
\end{enumerate}

\end{definition}
 It is not difficult to see that $K_*^L(C_0(X)^+)$ are abelian groups for any $L \geq 0$. In fact, the quantitative $K$-theory applies to a much broader class of $C^\ast$-algebras. We would like to refer the reader to \cite{Oyono-OyonoYu} for a  systematic approach to the quantitative $K$-theory for more general $C^\ast$-algebras. 

Recall that we used $l^1$-metric on the  nerve space  $\mathcal{N}_r$ above. In fact, there is quite a bit of flexibility when it comes to the choice of such a metric. It turns out that, in order to improve the various contants that will appear in the proof of our main theorem, let us consider a $l^2$-version of simplicial metric on $\mathcal N_r$ as follows. For each $n>0$, let $\overbar\Delta_n$ be the regular $n$-simplex in $\R^{n}$ centered at the origin. More precisely, the vertices $\{v_n^j\}_{0\leq j\leq n}$ of $\overbar\Delta_n$ are defined inductively by:
$$\begin{cases}
	v_1^0=1,~v_1^1=-1,\\
	\displaystyle v_n^n=0\oplus e_{n+1},~v_n^j=\sqrt{1-\frac{1}{n^2}} v_{n-1}^j\oplus - \frac 1 n e_n,
\end{cases}$$
with $e_n$ the unit vector of the $n$-th direction of $\R^n$.

Let $\widetilde\Delta_n$ be the simplex in the unit ball of $\R^n$, whose vertices are $\{v_n^j\}_{0\leq j\leq n}$ and faces are the spherical simplices generated by the vertices. Equip $\widetilde\Delta_n$ with the Euclidean metric $\widetilde d$. Now for the simplicial complex $\mathcal N_r$, we equip each top-dimensional simplex with the metric $\widetilde d$ and define the distance between any two points in two different simplices to the distance of the short path connecting the two points. We denote the resulting metric on $\mathcal N_r$ by $\widetilde d$, since no confusion is likely arise.  Note that, with respect to the new metric $\widetilde d$ on  $\widetilde\Delta_n$, every Lipschitz function $f\colon \partial\widetilde\Delta_n\to M_k(\mathbb C)$ with Lipschitz function $L$ extends to a Lipschitz function on $\widetilde \Delta_n$ by 
$$\widetilde f(x)=\begin{cases}
	f(\frac{x}{|x|})(2|x|-1),&|x|\geq 1/2,\\
	0,&|x|\leq 1/2,
\end{cases}$$
where  $|x|$ is the distance of $x$ to the origin. A key observation is that the Lipschitz constant of $\widetilde f$ is  at most $2L+2\|f\|$ which is  \emph{independent} of the dimension of $\widetilde \Delta_n$. 

Now let $f_r$ be the map from line \eqref{eq: map_to_nerve}. Note that the radial projection from $(\Delta_n,d_{l^1})$ to $ (\widetilde\Delta_n,\widetilde d)$ has Lipschitz constant at most $n$. It follows that with respect to the new metric $\widetilde d$ on $\mathcal N_r$, we have 
\begin{equation}\label{eq: lip_cube}
	\lip(f_r) \leq \frac{(m+1)^3}{r}.
\end{equation}

A key result of the Lipschitz controlled $K$-theory is the following theorem from {\cite{Wang:2021uj}},  which states that $K$-theory classes of finite dimensional simplicial complexes have Lipschitz controlled representatives.  

\begin{theorem}[{\cite{Wang:2021uj}}]\label{thm: K_rep_theorem}
	Let  $(\mathcal N,\widetilde d)$ be a locally compact $m$-dimensional simplicial complex equipped with the metric $\widetilde d$ as above. Then  there exists a constant $L_m$ such that every class in $K_  *(C_0(\mathcal N))$ admits an $L_m$-Lipschitz representative in $K^{L_m}_*(C_0(\mathcal N))$. Here $L_m\leq C_1e^{C_2m}$ for some universal positive constants $C_1$ and $C_2$ independent of $\mathcal N$. Moreover, if $\alpha \in K_  *(C_0(\mathcal N))$ can be represented by an element that is constant outside a compact $K\subset \mathcal N$, then one can choose an $L_m$-Lipschitz representative of $\alpha$ that is constant outside the $1$-neighborhood of $K$.
\end{theorem}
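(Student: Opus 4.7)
The plan is to prove the theorem by induction on the dimension $m$ of $\mathcal N$. The base case $m=0$ is immediate: $\mathcal N$ is a discrete space with all pairwise distances bounded below, so every $K$-class has a locally constant representative and any choice of $L_0$ suffices. For the inductive step, assume the result holds for $(m-1)$-dimensional simplicial complexes with constant $L_{m-1}$, and let $\alpha \in K_*(C_0(\mathcal N))$.

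The first main step would be an excision reduction. Let $V := \mathcal N \setminus \mathcal N^{(m-1)}$, the disjoint union of the interiors of the top simplices, each of which is homeomorphic to $\mathbb R^m$. The short exact sequence $0 \to C_0(V) \to C_0(\mathcal N) \to C_0(\mathcal N^{(m-1)}) \to 0$ gives a six-term exact sequence
\[
\cdots \to \bigoplus_{\sigma} K_*(C_0(\mathbb R^m)) \xrightarrow{\iota_*} K_*(C_0(\mathcal N)) \xrightarrow{r_*} K_*(C_0(\mathcal N^{(m-1)})) \to \cdots.
\]
By induction, $r_*(\alpha)$ admits an $L_{m-1}$-Lipschitz representative $f$ on $\mathcal N^{(m-1)}$. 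I would extend $f$ to $\mathcal N$ using the radial formula $\widetilde f(x) = f(x/|x|)(2|x|-1)$ for $|x|\geq 1/2$ (and $0$ otherwise) on each top simplex $\widetilde\Delta_m$; the dimension-free estimate $\lip(\widetilde f) \leq 2L_{m-1} + 2\|f\|$ noted before the theorem statement is precisely the feature of the metric $\widetilde d$ that makes this step work. Since $\widetilde f$ is no longer a projection (respectively, unitary) inside the interior of a top simplex, I would follow this by a controlled functional-calculus step: $\widetilde f$ is a Lipschitz self-adjoint element close to a projection, and the spectral cutoff $\chi_{[1/2,\infty)}(\widetilde f)$ yields an honest Lipschitz projection homotopic to $\widetilde f$, distorting the Lipschitz constant by a factor independent of $m$ (polar decomposition plays the analogous role in the $K_1$ case).

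Denote the resulting honest representative by $p_0$. Then $r_*([p_0]) = r_*(\alpha)$, so by exactness $\alpha - [p_0]$ lies in the image of $\iota_*$, i.e., it is a sum over top simplices of Bott-type classes in $K_*(C_0(\mathbb R^m))$. For each such simplex I would use an explicit Lipschitz representative of the Bott generator on $\widetilde\Delta_m$, built from Clifford multiplication on a spinor module, with Lipschitz constant bounded by some $B_m \leq C' e^{C''m}$ and supported inside the interior of the simplex. Summing these mutually disjointly supported Bott corrections and combining them with $p_0$ produces an $L_m$-Lipschitz representative of $\alpha$, with
\[
L_m \leq 2L_{m-1} + 2 + B_m,
\]
which iterates to the claimed exponential bound $L_m \leq C_1 e^{C_2 m}$.

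The main obstacle is the conversion of the radial extension $\widetilde f$ back into a genuine projection or unitary without losing the dimension-free Lipschitz control: the radial formula preserves Lipschitz bounds uniformly in $m$, but does not preserve the relations $p^2=p$ or $u^*u = 1$ inside the interior of a simplex, so the functional-calculus (or polar decomposition) step is the delicate part of the argument. The moreover clause about compact support follows automatically from the construction: each simplex has diameter at most $2$ in $\widetilde d$, and both the inductive extension and the Bott corrections only modify values inside top simplices that meet the original support, hence within the $1$-neighborhood of the compact set $K$.
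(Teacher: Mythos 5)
Your overall structure (induction on the dimension of the simplicial complex, using the dimension-free radial extension on each top simplex) matches the spirit of the paper's argument, but the specific mechanism you propose has a genuine gap at the extension step, and it bypasses the quantitative machinery the paper relies on.

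The problem is with your claim that the radial extension $\widetilde f(x)=f(x/|x|)(2|x|-1)$ (for $|x|\geq 1/2$) of a projection $f$ is "close to a projection," so that a spectral cutoff $\chi_{[1/2,\infty)}$ produces an honest Lipschitz projection. If $f$ is a nonzero projection on the boundary of a top simplex, then on the shell $1/2\leq |x|\leq 1$ we have $\widetilde f^2-\widetilde f = f(x/|x|)\bigl((2|x|-1)^2-(2|x|-1)\bigr)$, and the scalar factor attains $-1/4$ at $|x|=3/4$. Thus $\|\widetilde f^2-\widetilde f\|=1/4$, which is not small; worse, the spectrum of the resulting function-valued operator sweeps through the entire interval $[0,1]$ (at each $|x|$ the nonzero eigenvalue is $2|x|-1$), so there is no spectral gap at $1/2$ and $\chi_{[1/2,\infty)}(\widetilde f)$ is not even continuous. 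The same obstruction kills the $K_1$ case: a unitary extended by this formula becomes zero at $|x|=1/2$, so it is not invertible and polar decomposition does not apply. The radial extension is a tool for extending \emph{arbitrary} Lipschitz matrix-valued functions with a dimension-free constant, not for extending projections or unitaries as such; in the paper it is used to verify that the restriction maps $\pi_i\colon C(\mathcal N_i)\to C(\mathcal N_1\cap\mathcal N_2)$ are surjective with Lipschitz control, which is an ingredient in a controlled diagram chase, not a direct construction of a representative.

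Related to this, the paper decomposes $\mathcal N$ as $\mathcal N_1\cup\mathcal N_2$ (a neighborhood of the $(m-1)$-skeleton union the open top cells) and runs the inductive argument through a quantitative Mayer--Vietoris six-term sequence that is only \emph{asymptotically} exact with respect to the Lipschitz filtration; all of the bookkeeping that gives the recursion $L_m\leq A_1 L_{m-1}+A_2$ lives in that machinery (in particular, in a quantitative five lemma). Your proposal instead uses the pair exact sequence for $(\mathcal N,\mathcal N^{(m-1)})$, restricts to the skeleton, and tries to correct by explicit Bott classes on each open cell. Even setting aside the spectral gap problem above, this route has two further unaddressed points: (i) you would need a Lipschitz-controlled Bott generator on $\widetilde\Delta_m$, supported in the interior, with explicit Lipschitz bound — this is an additional construction not needed in the paper's approach and not trivially bounded by $Ce^{C'm}$; and (ii) the final identification $[\alpha]=[p_0]+\iota_*(\beta)$ only gives a $K$-theory identity, and promoting it to a single $L_m$-Lipschitz representative requires a homotopy whose Lipschitz constant also has to be controlled — exactly the kind of diagram-chase quantification that the asymptotic Mayer--Vietoris framework is designed to handle and which your sketch omits.
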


For the convenience of the reader, let us briefly sketch the proof of Theorem \ref{thm: K_rep_theorem}. The complete details can be found in    \cite{Wang:2021uj}. For simplicity, let us $\mathcal N$ is compact. Let $\mathcal N_1$ be a small neighborhood of the $(m-1)$-skeleton of $\mathcal N$, and $\mathcal N_2$ be the interior of all $m$-simplices of $\mathcal N$. Then $\mathcal N_1$ is Lipschitz homotopic to the $(m-1)$-skeleton of $\mathcal N$, $\mathcal N_2$ is Lipschitz homotopic to a disjoint union of points, and $\mathcal N_1\cap\mathcal N_2$ is Lipschitz homotopic to a disjoint union of $(m-1)$-spheres. 

Theorem \ref{thm: K_rep_theorem} is proved by induction. First, Theorem \ref{thm: K_rep_theorem}  obviously holds for $\mathcal N_2$ with zero Lipschitz constant. Note that $\mathcal N_1\cap\mathcal N_2$ and $\mathcal N_1$ are essentially of dimension $(m-1)$. By induction, assume that Theorem \ref{thm: K_rep_theorem} holds for $\mathcal N_1\cap\mathcal N_2$ and $\mathcal N_1$ with Lipschitz constants $L_{m-1}$.
Let 
$$\pi_i\colon C(\mathcal N_i)\to C(\mathcal N_1\cap \mathcal N_2),~i=1,2$$
be the restriction map between the algebras of continuous functions, which is surjective for $i=1,2$. Furthermore, $\pi_i$ is also surjective with respect to the Lipschitz filtration, in the sense that every Lipschitz function $f\in M_k(C(\mathcal N_1\cap \mathcal N_2))_L$ extends to a Lipschitz function in $M_k(C(\mathcal N_i))$ with Lipschitz constant at most $2L+2\|f\|$. Now the partition $\mathcal N=\mathcal N_1\cup \mathcal N_2$ induces a Mayer--Vietoris six-term exact sequence  of $K$-theory groups $K^*(\mathcal N)$, $K^*(\mathcal N_1)\oplus K^*(\mathcal N_2)$, and $K^*(\mathcal N_1\cap\mathcal N_2)$, which is  asymptotically exact if we additionally keep track of the Lipschitz filtration; see \cite[Definition 4.5 \& Proposition 4.10]{Wang:2021uj}. Since the Lipschitz constant of the extension (of Lipschitz functions) is independent of $m$, there exist universal constants $A_1,A_2>0$ such that the desired constant $L_m$ is at most 
\begin{equation}\label{eq:A1A2}
	L_m\leq A_1\cdot L_{m-1}+A_2.
\end{equation}
See \cite[Appendix]{Wang:2021uj}. Now Theorem \ref{thm: K_rep_theorem} follows by the induction on the dimension of the skeleton, which ends in  $m$ steps. This shows that 
\begin{equation*}
	L_m\leq C_1 e^{C_2 m}
\end{equation*}
for some universal constants $C_1,C_2>0$.

%For the present paper, $(\mathcal N,\widetilde d)$ is typically the nerve complex $(\mathcal{N}_r,\widetilde d)$ given after Definition \ref{def: finite_asymptotic_dimension}.

\begin{remark}
	By tracing through the proofs of the five lemma \cite[Lemma A.19]{Wang:2021uj} and the asymptotic exact sequences \cite[Proposition A.26 \& Theorem A.35]{Wang:2021uj}, one sees that the constants $A_1$ and $A_2$ in the above discussion are at most $10^{20}$. As a result, we have $C_1\leq 10^{20}$ and $C_2\leq 50$, where $C_1$ and $C_2$ are  the constants in Theorem \ref{thm: K_rep_theorem}. The bounds $10^{20}$ and $50$ are not optimal. It is an interesting question what are the optimal constants. 
\end{remark}
\section{Proof of Theorem \ref{thm:main}}\label{sec:main}

In this section, we prove the main theorem (Theorem \ref{thm:main}).
\subsection{Index pairing}
To prepare the proof of Theorem \ref{thm:main}, let us first review some basics of the index pairing.  We shall mainly focus on the even dimensional case. The odd dimensional case is similar. Let $M$ be a complete spin manifold of even dimension. The Dirac operator on $M$ defines a $K$-homology class $[D]$ in $K_0(M)$, and there is a pairing map
\begin{equation}\label{eq:pairing}
	K^0(M)=K_0(C_0(M))\to K_0(\mathcal K)\cong\Z,
\end{equation}
given by 
\[\alpha\mapsto\langle [D],\alpha\rangle,  \]
where $C_0(M)$ is the algebra of continuous functions on $M$ vanishing at infinity, and $\mathcal K$ is the algebra of compact operators on a Hilbert space. Let us describe an explicit formula for $\langle [D],\alpha\rangle$, cf.  \cite{Willett_Yu_Book_higher_index_theory}. 

%In this subsection, we will present the index pairing between $K$-homology and $K$-theory, a crucial concept for our purposes and possessing inherent significance. Several equivalent formulations exist for the index pairing??????????????????????????????. Notably, when dealing with Dirac operators and compactly supported vector bundles over spin manifolds, the index pairing straightforwardly yields the relative indices of the corresponding twisted Dirac operators. Here, we will articulate one definition of the index pairing, which offers distinct advantages in its application to geometric problems.

Let us  first review the difference construction of $K$-theory elements from \cite{KasparovYuconvex}. 
\begin{lemma}\label{lemma:double}
	If $A$ is a $C^*$-algebra and $I$ is an ideal of $A$, and we define
	$$D_A(I)=\{(a,a')\in A\oplus A: a-a'\in I \},$$
	then there is a natural surjective homomorphism from $K_*(D_A(I))$ to $K_*(I)$. In other words, each $K$-theory class of the ideal $I$ can be expressed by $[\alpha]-[\beta]$ for some  $\alpha,\beta\in M_k(A^+)$ with $\alpha-\beta\in M_k(I)$.
\end{lemma}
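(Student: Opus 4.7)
The plan is to realize $K_*(I)$ as the kernel of the $K$-theory map induced by one of the two natural projections from $D_A(I)$ to $A$, using the fact that the other projection (or the diagonal) splits this map.

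First, I would introduce the two coordinate $*$-homomorphisms $\pi_1, \pi_2\colon D_A(I)\to A$, $\pi_i(a_1,a_2)=a_i$, together with the diagonal embedding $\Delta\colon A\to D_A(I)$, $\Delta(a)=(a,a)$. A direct check shows that $\pi_2$ is surjective with kernel $\{(a,0):a\in I\}$, naturally $*$-isomorphic to $I$, and that $\pi_2\circ\Delta=\mathrm{id}_A$. This gives a split short exact sequence
\[
0\longrightarrow I\longrightarrow D_A(I)\stackrel{\pi_2}{\longrightarrow} A\longrightarrow 0,
\]
and therefore, upon applying the (half-exact, split-preserving) functor $K_*$, a split short exact sequence of abelian groups
\[
0\longrightarrow K_*(I)\longrightarrow K_*(D_A(I))\stackrel{(\pi_2)_*}{\longrightarrow} K_*(A)\longrightarrow 0.
\]
The desired surjection $K_*(D_A(I))\twoheadrightarrow K_*(I)$ is then the retraction $x\mapsto x-\Delta_*(\pi_2)_*(x)$ onto the kernel of $(\pi_2)_*$, which by split exactness is canonically identified with $K_*(I)$.

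Next, I would make the formula explicit. Passing to unitizations if necessary, a projection in $M_k(D_A(I)^+)$ is exactly a pair $(\alpha,\beta)$ of projections in $M_k(A^+)$ satisfying $\alpha-\beta\in M_k(I)$ (and the additional scalar compatibility from the unitization). Under the retraction above, the class $[(\alpha,\beta)]\in K_0(D_A(I))$ is sent to $[(\alpha,\beta)]-\Delta_*[\beta]=[(\alpha,\beta)]-[(\beta,\beta)]$, which lies in the image of $K_0(I)\hookrightarrow K_0(D_A(I))$ and corresponds to the formal difference $[\alpha]-[\beta]$ viewed as an element of $K_0(I)$ via the standard description of $K_0$ of a (possibly non-unital) ideal. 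The analogous statement in $K_1$ is obtained by replacing projections with unitaries. Surjectivity of the retraction then yields the stated normal form: every class in $K_*(I)$ is of the form $[\alpha]-[\beta]$ with $\alpha,\beta\in M_k(A^+)$ and $\alpha-\beta\in M_k(I)$.

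There is no substantive obstacle: the entire argument is formal once the split short exact sequence is exhibited, and the only minor care is in handling unitizations correctly so that the representatives $(\alpha,\beta)$ are genuinely projections (resp.\ unitaries) in $M_k(D_A(I)^+)$. The lemma is, in effect, an abstract restatement of the well-known excision/difference construction identifying $K_*(I)$ with relative $K$-theory classes of $(A,A/I)$ in the disguise of the double $D_A(I)$.
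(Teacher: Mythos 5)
Your argument is correct and coincides with the paper's proof: both exhibit the split short exact sequence $0\to I\to D_A(I)\to A\to 0$ with splitting given by the diagonal $a\mapsto(a,a)$, and obtain the surjection $K_*(D_A(I))\twoheadrightarrow K_*(I)$ from the resulting direct-sum decomposition. Your write-up simply spells out the retraction formula and the normal form in slightly more detail than the paper does.
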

\begin{proof}
	Consider the following short exact sequence
	$$0\longrightarrow I\longrightarrow D_A(I)\longrightarrow A\longrightarrow 0,$$
	where the map $I\to D_A(I)$ is given by $a\mapsto (a,0)$. This exact sequence splits since there exists a section $A\to D_A(I)$ given by $a\mapsto (a,a)$. Thus, $K_*(D_A(I))\cong K_*(I)\oplus K_*(A)$,  and the required surjective homomorphism from $K_*(D_A(I))$ to $K_*(I)$ is given by the projection from $K_*(D_A(I))$ to the first component.
\end{proof}	

Now suppose $[\alpha]-[\beta]$ is a $K$-theory class in  $K_0(I)$ from Lemma $\ref{lemma:double}$. We shall review the difference construction \cite{KasparovYuconvex}, which provides an explicit way to construct idempotents of the unitization $I^+$ that  represent  the same class as $[\alpha]-[\beta]$ in $K_0(I)$.  Without loss of generality, we may assume $\alpha,\beta$ are idempotents in $A^+$ such that $\alpha-\beta\in I$. Set
$$Z(\beta)=\begin{pmatrix}
	\beta &0&1-\beta &0\\ 1-\beta &0&0&\beta\\0&0&\beta&1-\beta\\0&1&0&0
\end{pmatrix}\in M_4(A^+).$$
and it is invertible with inverse
$$Z(\beta)^{-1}=\begin{pmatrix}
	\beta &1-\beta &0&0\\
	0&0&0&1\\
	1-\beta &0&\beta&0\\0&\beta&1-\beta&0
\end{pmatrix}\in M_4(A^+).$$
We define
\begin{equation}\label{eq:diffe}
	\begin{split}
		d(\alpha,\beta) \coloneqq  & Z(\beta)^{-1}\begin{pmatrix}
			\alpha&&&\\
			&1-\beta&&\\
			&&0&\\
			&&&0
		\end{pmatrix}Z(\beta)\\=&\begin{pmatrix}
			1+\beta(\alpha-\beta)\beta&0&\beta(\alpha-\beta)&0\\
			0&0&0&0\\
			(\alpha-\beta)\alpha\beta&0&(1-\beta)(\alpha-\beta)(1-\beta)&0\\
			0&0&0&0
		\end{pmatrix}\in M_4(I^+)
	\end{split}
\end{equation}
and 
$$
e_{1,3} \coloneqq \begin{psmallmatrix}
	1&&&\\
	&0&&\\
	&&0&\\
	&&&0
\end{psmallmatrix}.
$$
Note that both $d(\alpha,\beta)$ and $e_{1, 3}$ are idempotents in
$M_4(I^+)$, and  $$[\alpha]-[\beta] = [d(\alpha,\beta)]-[e_{1,3}] \in K_0(I).$$

Now we return to the construction of the index pairing. Let $M$ be an even-dimensional complete spin manifold and $D$ the Dirac operator acting on its spinor bundle $S$. The spinor bundle is naturally $\Z_2$-graded and the Dirac operator is an odd operator of the form:  
$$D=\begin{pmatrix}
	0&D_+\\D_-&0
\end{pmatrix}.$$

Recall that a continuous function $\chi\colon \mathbb R \to [-1, 1]$ is called \emph{normalizing function} if $\chi$ is a non-decreasing odd function  such that 
$\displaystyle \lim_{x\to \pm \infty} \chi(x) = \pm 1$. To be specific, we consider the following the smooth function $\chi$ given by
$$\chi(x)=\frac{2}{\pi}\int_0^x\frac{1-\cos y}{y^2}dy -1.$$
It is easy to verify that $\chi$ is a normalizing function. Moreover, the Fourier transform $\widehat\chi$ has compact support.  Note that, for any $t>0$, the operator $\chi(t^{-1}D)$ is a self-adjoint odd operator of the form: 
\begin{equation}\label{eq:chi}
	\chi(t^{-1} D)=\begin{pmatrix}
		0&U_{t,D}\\V_{t,D}&0
	\end{pmatrix},
\end{equation}
where $U_{t,D}$ and $V_{t,D}$ are bounded operators acting on the Hilbert space $L^2(M,S)$ of $L^2$-sections of the spinor bundle $S$. Since $\widehat \chi$ has compact support, the operator   $\chi(t^{-1}D)$ has finite propagation, and its propagation goes to zero as $t\to\infty$. It follows that
\begin{equation}\label{eq:propa}
	\|[\chi(t^{-1}D),f]\|\to 0,\text{ as }t\to\infty
\end{equation}
for any $f\in C_0(M)$, where $[\chi(t^{-1}D),f] = \chi(t^{-1}D)\cdot f - f\cdot \chi(t^{-1}D)$ is the commutator of $\chi(t^{-1}D)$ and $f$.
Moreover, since $\chi^2-1\in C_0(\R)$, the operators
\begin{equation}\label{eq:localCompact1}
	f\cdot \left(\chi(t^{-1} D)^2-1\right)=\begin{pmatrix}
	f\cdot (U_{t,D}V_{t,D}-1)&0\\0&f\cdot (V_{t,D}U_{t,D}-1)
\end{pmatrix}
\end{equation}
\begin{equation}\label{eq:localCompact2}
	\left(\chi(t^{-1} D)^2-1\right)\cdot f=\begin{pmatrix}
	(U_{t,D}V_{t,D}-1)\cdot f&0\\0& (V_{t,D}U_{t,D}-1)\cdot f
\end{pmatrix}
\end{equation}
are compact operators for any $f\in C_0(M)$. 

Set
$$W_{t,D}=\begin{pmatrix}
	1& U_{t,D}\\0&1
\end{pmatrix}
\begin{pmatrix}
	1&0\\-V_{t,D}&1
\end{pmatrix}
\begin{pmatrix}
	1&U_{t,D}\\0&1
\end{pmatrix}
\begin{pmatrix}
	0&-1\\1&0
\end{pmatrix},\ e_{1,1}=\begin{pmatrix}
	1&0\\0&0
\end{pmatrix}
$$
and
\begin{equation}\label{eq:PtD}
	\begin{split}
		P_{t,D} = & W_{t,D}e_{1,1}W_{t,D}^{-1}\\
		= & \begin{pmatrix}
			1-(1-U_{t,D}V_{t,D})^2 & (2-U_{t,D}V_{t,D})U_{t,D}(1-V_{t,D}U_{t,D})\\
			V_{t,D}(1-U_{t,D}V_{t,D})&(1-V_{t,D}U_{t,D})^2
		\end{pmatrix}.
	\end{split}
\end{equation}
The formal difference of $P_{t,D}$ and $e_{1,1}$ gives an explicit representative of the $K$-homology class $[D]$, cf. \cite[Remark 8.3.15]{Willett_Yu_Book_higher_index_theory}.

Now let $[p]-[q] \in K_0(C_0(M))$, where $p,q$ are projections in $M_k(C_0(M)^+)$ such that $p-q \in M_k(C_0(M))$ for some $k \in \mathbb{N}$. 
To simplify notation, let   $P_t = P_{t, D_k}$, where $D_k=D\otimes I_k$ for some $k \in \mathbb{N}$. It follows from line \eqref{eq:propa} that 
$$\|[P_t,p]\|\to 0  \textup{ and } \|[P_t,q]\|\to 0, \text{ as }t\to \infty.$$
Consequently,
\begin{equation}\label{eq:asymIdem1}
	\|(P_{t}\cdot p)^2-P_{t}\cdot p\| \to 0 \textup{ and }  \|(P_{t}\cdot q)^2-P_{t}\cdot q\| \to 0,  \textup{ as }   t\to \infty. 
\end{equation}

Now we apply the difference construction in line \eqref{eq:diffe} to define $$a_{t,p,q} = d(P_t\cdot p,P_t\cdot q)  \text{ and } b_{p,q} = d(e_{1,1}\otimes p,e_{1,1}\otimes q),$$ where $e_{1,1}=\begin{psmallmatrix}1&0\\0&0\end{psmallmatrix}$. We see that $b_{p,q}^2=b_{p,q}$, and $a_{t,p,q}$ is an  almost idempotent in the sense that
\begin{equation}\label{eq:asymIdem2}
	\|a_{t,p,q}^2-a_{t,p,q}\|\to 0,  \textup{ as }   t\to \infty.
\end{equation}
We further consider the difference construction of $a_{t,p,q}$ and $b_{p,q}$ by setting
$$d_{t,p,q}\coloneqq d(a_{t,p,q}, b_{p,q})\text{  and }e\coloneqq e_{1,3}\otimes I_k\otimes I_4,$$
where $I_{k}$ denotes the $(k\times k)$ identity matrix  and 
\[ e_{1,3} = \begin{psmallmatrix}
	1 & & & \\ 
	& 0 & & \\
	& & 0 & \\
	& & & 0
\end{psmallmatrix}. \] 

By the construction in \eqref{eq:diffe} and \eqref{eq:PtD}, each entry of the matrix $(d_{t,p,q}-e)$ is a (noncommutative) polynomial of $U_{t,D}$, $V_{t,D}$, $p$ and $q$. Each monomial term of the polynomials  contains the factors $1-U_{t,D}V_{t,D}$ (or $1-V_{t,D}U_{t,D}$) and $p-q$.
It follows from line \eqref{eq:localCompact1} and \eqref{eq:localCompact2} that
$$d_{t,p,q}\in M_{32k}(\mathcal K^+),~\forall t>0,$$
and
$$ d _{t,p,q} - e = d(a_{t,p,q}, b_{p,q})-e_{1,3}\otimes I_{2k}\otimes I_4\in M_{32k}(\cK),~\forall t>0.$$

By line \eqref{eq:asymIdem1} and \eqref{eq:asymIdem2}, $d_{t,p,q}$ is an almost idempotent in the sense that
\begin{equation}\label{eq: decay_to_zero}
	\|d_{t,p,q}^2-d_{t,p,q}\|\to 0,  \textup{ as }   t\to \infty
\end{equation}
More precisely, we derive the following quantitative estimate of $\|d_{t,p,q}^2-d_{t,p,q}\|$. 
\begin{lemma}[{ \cite[Lemma 3.4]{Wang:2021uj}}]\label{lemma:key1} With the same notation as above, if $p,q \in M_k(C_0(M)^+)$ be Lipschitz functions with Lipschitz constant $L$, then there exists a universal positive constant $c_1$ such that
	$$\|d^2_{t,p,q} - d_{t,p,q}\| \leq \frac{c_1 L}{t}.$$
\end{lemma}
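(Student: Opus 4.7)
The plan is to reduce the estimate to a single commutator bound $\|[\chi(t^{-1}D),f]\|\leq C L/t$ for any $L$-Lipschitz function $f\in M_k(C_0(M)^+)$, and then propagate this through the algebraic difference construction. The element $d_{t,p,q}=d(a_{t,p,q},b_{p,q})$ is an idempotent whenever $a_{t,p,q}$ is an idempotent, since $b_{p,q}=d(e_{1,1}\otimes p,e_{1,1}\otimes q)$ is already an exact idempotent (being built from the commuting projections $e_{1,1}$ and $p,q$). So the defect $\|d_{t,p,q}^2-d_{t,p,q}\|$ is controlled by a universal polynomial expression in the defect $\|a_{t,p,q}^2-a_{t,p,q}\|$ and the norms of the entries. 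In turn, $a_{t,p,q}=d(P_t\cdot p,P_t\cdot q)$, and since $P_t^2=P_t$ and $p^2=p$, one computes directly
\[
(P_t\cdot p)^2-P_t\cdot p = P_t\,[p,P_t]\,p,
\]
with the analogous identity for $q$, so that $\|a_{t,p,q}^2-a_{t,p,q}\|$ is governed by $\|[P_t,p]\|$ and $\|[P_t,q]\|$ up to universal constants.

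The crucial step is the commutator estimate itself. Using the specific normalizing function $\chi(x)=\tfrac{2}{\pi}\int_0^x\tfrac{1-\cos y}{y^2}dy-1$, whose Fourier transform $\widehat{\chi}$ is compactly supported (say in $[-R,R]$) and in $L^1(\mathbb R)$, I would write
\[
\chi(t^{-1}D)=\int_{-R}^{R}\widehat{\chi}(s)\,e^{ist^{-1}D}\,ds,
\]
and then apply Duhamel to obtain
\[
[e^{ist^{-1}D},f]=\int_0^s e^{iut^{-1}D}\bigl[it^{-1}D,f\bigr]e^{i(s-u)t^{-1}D}\,du.
\]
Since $f$ is $L$-Lipschitz, the principal symbol computation gives $\|[D,f]\|\leq L$ (this is the standard Connes formula bound for the Dirac operator; the Clifford symbol of $[D,f]$ is Clifford multiplication by $df$, of norm $|df|\leq L$). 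Therefore $\|[e^{ist^{-1}D},f]\|\leq L|s|/t$, and integrating against $|\widehat{\chi}|$ yields $\|[P_t,f]\|\leq C L/t$ with $C=\int_{-R}^{R}|s\,\widehat{\chi}(s)|\,ds$ a universal constant. Tensoring with $I_k$ preserves the bound, giving the same estimate for $p,q\in M_k(C_0(M)^+)$.

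Having $\|[P_t,p]\|,\|[P_t,q]\|\leq C L/t$, I would then chase the estimate through the two layers of the difference construction. The explicit formula for $d(\alpha,\beta)$ is a noncommutative polynomial of bounded degree in $\alpha,\beta,1-\alpha,1-\beta$ whose entries involve the factor $\alpha-\beta$. When $\alpha=P_t\cdot p$ and $\beta=P_t\cdot q$, the differences $\alpha^2-\alpha$ and $\beta^2-\beta$ are of order $L/t$, the norms of $\alpha,\beta$ are uniformly bounded (by $\|P_t\|\|p\|\leq 1\cdot 1$, since $\chi$ takes values in $[-1,1]$ and $p,q$ are projections), and hence a direct norm bound on the polynomial expression $a_{t,p,q}^2-a_{t,p,q}$ yields $\|a_{t,p,q}^2-a_{t,p,q}\|\leq c\,L/t$ for a universal constant. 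Applying the same kind of bound to the outer difference construction $d(a_{t,p,q},b_{p,q})$, using that $b_{p,q}^2=b_{p,q}$ exactly and that $\|a_{t,p,q}\|,\|b_{p,q}\|$ are uniformly bounded, produces the desired inequality $\|d_{t,p,q}^2-d_{t,p,q}\|\leq c_1 L/t$.

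The main obstacle is bookkeeping: the difference construction formula has many entries and the expansion of $d(\alpha,\beta)^2-d(\alpha,\beta)$ in terms of $\alpha^2-\alpha$, $\beta^2-\beta$, and $\alpha-\beta$ must be carried out carefully to verify that the dependence on $L/t$ is linear with a universal prefactor independent of $t$, $k$ and the particular $p,q$. Apart from this, the estimate $\|[D,f]\|\leq L$ for $L$-Lipschitz matrix-valued $f$ requires a brief justification through the symbol calculus (or via the bounded transform and smooth approximation of $f$), but once in hand the rest is straightforward operator-norm algebra.
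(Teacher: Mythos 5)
The paper does not prove this lemma itself; it cites \cite[Lemma 3.4]{Wang:2021uj}. Your strategy — bound $\|[D,f]\|$ by $L$, propagate this to a commutator estimate for $\chi(t^{-1}D)$ via Duhamel, and then chase the estimate algebraically through the two layers of the difference construction — is the standard and, as far as I can tell, the intended route. The reductions are sound: $P_t$ is an exact idempotent because it is a conjugate of $e_{1,1}$; $b_{p,q}$ is an exact idempotent because $e_{1,1}$ and $p,q$ live in different tensor factors; so the only source of defect is $\|(P_t p)^2 - P_t p\| = \|P_t[p,P_t]p\|$ (and the analogous term for $q$), and this is what needs the commutator bound. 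The conjugations $Z(\cdot)^{-1}(\cdot)Z(\cdot)$ only multiply that defect by uniformly bounded factors, so the final constant is universal.

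Two small imprecisions are worth fixing. First, $\|P_t\|\le 1$ is not correct: $P_t$ is an idempotent, not a projection, and its entries (e.g.\ $(2-U_{t,D}V_{t,D})U_{t,D}(1-V_{t,D}U_{t,D})$) have norm larger than $1$. All that is needed, and all that is true, is that $\|P_t\|$ is bounded by a universal constant because $\|U_{t,D}\|,\|V_{t,D}\|\le 1$; your argument survives this correction. Second, the identity
\[
\chi(t^{-1}D)=\int\widehat\chi(s)\,e^{ist^{-1}D}\,ds
\]
is not literally valid, since $\chi\notin L^1(\mathbb R)$ and $\widehat\chi$ is only a compactly supported distribution. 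The cleaner way to run the Duhamel argument is to note that $\chi'$ is (up to a constant) the Fej\'er kernel, so $\widehat{\chi'}$ is continuous, compactly supported, and in $L^1$; then
\[
[\chi(t^{-1}D),f]=\frac{1}{2\pi}\int \widehat{\chi'}(s)\left(\frac{1}{is}\int_0^s e^{iut^{-1}D}[\,it^{-1}D,f\,]\,e^{i(s-u)t^{-1}D}\,du\right)ds,
\]
where the inner bracket has norm $\le\|[t^{-1}D,f]\|\le L/t$ uniformly in $s$, giving $\|[\chi(t^{-1}D),f]\|\le\frac{\|\widehat{\chi'}\|_{L^1}}{2\pi}\cdot\frac{L}{t}$. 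Your constant $\int|s\widehat\chi(s)|\,ds$ coincides with $\|\widehat{\chi'}\|_{L^1}$, so the numerology is right even though the intermediate formula is not. Finally, as you note, the Clifford-symbol bound $\|[D,f]\|\le L$ needs $f$ differentiable; for merely Lipschitz $f$ one uses Rademacher (a.e.\ differentiability with $\|df\|_\infty\le L$) or a smoothing argument, but this is routine.
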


In particular, if $t> 4c_1L$, Lemma \ref{lemma:key1} implies that
\[\|d^2_{t,p,q} - d_{t,p,q}\| < \frac{1}{4}.\]
Now we apply the standard holomorphic functional calculus to obtain a genuine idempotent from $ d_{t,p,q}$. More precisely, consider the  holomorphic function
\[ \displaystyle \Theta(z) = \left\{
\begin{array}{ll}
	0 ,&  \text{if Re}(z) < \frac{1}{2}; \vspace{.5cm}\\
	1 ,&  \text{if Re}(z) \geq \frac{1}{2}.
\end{array}
\right.\]
We define
\begin{equation}
	\Theta(d_{t,p,q}) = \frac{1}{2\pi i} \int_{\gamma}(d_{t,p,q} - \xi)^{-1} d\xi, 
\end{equation}
where $\gamma = \{z \in \mathbb{C} : |z-1| =\frac{1}{2}\}$. and then $\Theta(d_{t,p,q})$ is an idempotent. 
\begin{definition}[{cf. \cite[\S 9.1]{Willett_Yu_Book_higher_index_theory}}]\label{eq: theta}
	The index pairing \eqref{eq:pairing} of the Dirac operator $D$ and $[p]-[q]\in K_0(M)$ is explicitly given by
	 $$\langle [D], [p]-[q] \rangle = [\Theta(d_{t, p,q})]-[e]\in K_0(\mathcal K)=\Z$$ for any  $t> 4c_1L$.
\end{definition}

%By definition,  understanding the index pairing is equivalent to studying $d_{t,p,q}^2 - d_{t,p,q}$ and $d_{t,p,q} - e$. In the following, we will characterize their behavior quantitatively in terms of the scalar curvature lower bound.

Now suppose that Riemannian manifod $(M^n,g)$ has uniformly positive scalar curvature $\Sc_g \geq \sigma^2 >0$. It follows from the Lichnerowicz formula
$$D^2=\nabla^*\nabla+\frac{\Sc_g}{4}$$
that the Dirac operator $D$ of $M$  has a spectral gap at zero. In this case, we see that 
$$\|\chi(t^{-1}D)^2-1\|\to 0,\text{ as }t\to 0.$$ 
Moreover, we have the following lemma from  \cite{Wang:2021uj}.
\begin{lemma}[{\cite[Lemma 3.4]{Wang:2021uj}}] \label{lemma:key2} 
	Suppose that $(M^n, g)$ is a complete, non-compact Riemannian manifold. If $p-q$ is supported on a compact set in $M$ and $\Sc_g \geq \sigma^2 >0$ on $M$, then there exists a universal constant constant $c_2$ such that
		$$ \displaystyle \|d_{t,p,q}-e\|\leq \frac{c_2 t}{\sigma}$$
		for all $t>0$.
\end{lemma}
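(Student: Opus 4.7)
The plan is to leverage the Lichnerowicz spectral gap of $D$ to show that $\chi(t^{-1}D)^2-1$ has operator norm of order $t/\sigma$, and then to propagate this estimate through the explicit polynomial formulas defining $d_{t,p,q}-e$.

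First, I would apply the Lichnerowicz identity $D^2=\nabla^*\nabla+\Sc_g/4$ together with the hypothesis $\Sc_g\geq\sigma^2$ to deduce that $\mathrm{spec}(D^2)\subset[\sigma^2/4,\infty)$; equivalently, $\mathrm{spec}(t^{-1}D)$ is disjoint from the open interval $(-\sigma/(2t),\sigma/(2t))$. Next, I would establish a pointwise bound on $|1-\chi(x)^2|$ in the regime $|x|\geq\sigma/(2t)$. Since $\chi$ is a smooth odd normalizing function of the specific form given in line \eqref{eq:chi} (preceding paragraph), the integral remainder formula yields $|1-\chi(x)|=O(1/|x|)$ as $x\to+\infty$ and $|1+\chi(x)|=O(1/|x|)$ as $x\to-\infty$. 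Factoring $1-\chi(x)^2=(1-\chi(x))(1+\chi(x))$ and bounding the second factor by $2$, I obtain a universal constant $C$ with $|1-\chi(x)^2|\leq C/|x|$ for $|x|\geq 1$; the trivial bound $|1-\chi(x)^2|\leq 1$ handles smaller $|x|$. The functional calculus then gives
\[
\|\chi(t^{-1}D)^2-1\|\leq C' \, t/\sigma \quad\text{for all } t>0,
\]
with $C'$ universal. In particular, both $\|1-U_{t,D}V_{t,D}\|$ and $\|1-V_{t,D}U_{t,D}\|$ are bounded by $C't/\sigma$.

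Finally, the paragraph immediately preceding the lemma records that every monomial in the polynomial expression for the matrix $(d_{t,p,q}-e)$ contains a factor of either $1-U_{t,D}V_{t,D}$ or $1-V_{t,D}U_{t,D}$ (together with a factor of $p-q$, which is what ensures $d_{t,p,q}-e\in M_{32k}(\mathcal{K})$). Since $\|\chi\|_\infty\leq 1$ we have $\|U_{t,D}\|,\|V_{t,D}\|\leq 1$, and since $p,q$ are projections we have $\|p\|,\|q\|\leq 1$. Therefore each monomial contributes at most a universal constant times $\|\chi(t^{-1}D)^2-1\|$, and summing the finitely many monomials gives the stated bound $\|d_{t,p,q}-e\|\leq c_2 \, t/\sigma$ for a universal constant $c_2$.

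The main obstacle is the pointwise decay estimate on $1-\chi(x)^2$ in the second step; once this is in hand, the rest is essentially a mechanical operator-norm expansion, since the relevant polynomial structure of $d_{t,p,q}-e$ is already made explicit in the construction via the difference-of-projections formula \eqref{eq:diffe} applied to $P_{t,D}\cdot p$, $P_{t,D}\cdot q$ and then to $e_{1,1}\otimes p$, $e_{1,1}\otimes q$.
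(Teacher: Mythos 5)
Your proof is correct and follows essentially the approach that the paper sets up in the surrounding text (the paper itself cites \cite[Lemma 3.4]{Wang:2021uj} for this lemma rather than reproducing a proof). The Lichnerowicz identity gives the spectral gap $\mathrm{spec}(|D|)\subset[\sigma/2,\infty)$, the explicit normalizing function $\chi$ (note the trailing $-1$ in the paper's displayed formula is a typo; the intended $\chi(x)=\frac{2}{\pi}\int_0^x\frac{1-\cos y}{y^2}\,dy$ satisfies $|1-\chi(x)|\leq\frac{4}{\pi x}$ for $x>0$) yields $|1-\chi(x)^2|\leq\min(1,C/|x|)$, and the functional calculus then gives $\|1-U_{t,D}V_{t,D}\|,\|1-V_{t,D}U_{t,D}\|\leq C' t/\sigma$ for all $t>0$; combining this with $\|U_{t,D}\|,\|V_{t,D}\|,\|p\|,\|q\|\leq 1$ and the fact, recorded in the paragraph preceding the lemma, that each of the finitely many monomials in $d_{t,p,q}-e$ carries a factor $1-U_{t,D}V_{t,D}$ or $1-V_{t,D}U_{t,D}$, gives the stated estimate.
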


%We emphasize that the 

With the above preparation, we have the following key estimate which shows  if a complete Riemannian manifold $(M^n, g)$ has uniformly positive scalar curvature, then the pairing of its Dirac operator with a bundle with sufficiently small Lipschitz constant has zero index. 
%Our first result is that nonzero index pairing and positive scalar curvature imply $K$-groups is away from ``flat'' uniformly. 

\begin{proposition}[{\cite[Proposition 3.5]{Wang:2021uj}}] \label{thm: Scalar_curvatyre_Lip}
	Suppose that $(M^{n},g)$ is a complete, noncompact, spin Riemannian manifold with uniformly positive scalar curvature $\Sc_g \geq \sigma^2 > 0$. Let $D$ be the Dirac operator on $M$ and $[p]-[q]\in K_0^L(M)$ with $p,q$ projections in $M_k(C_0(M)^+)$. If 
	 \begin{equation}
	L < \frac{\sigma}{16c_1c_2},
\end{equation}
	then the index pairing $\langle [D], [p]-[q] \rangle = 0$.
\end{proposition}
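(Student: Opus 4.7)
The plan is to exploit the two quantitative estimates already assembled, namely Lemma \ref{lemma:key1} (almost-idempotency controlled by $c_1L/t$) and Lemma \ref{lemma:key2} (distance of $d_{t,p,q}$ to the reference idempotent $e$ controlled by $c_2 t/\sigma$), by tuning the scale parameter $t$ to make both quantities small at the same time. The hypothesis $L < \sigma/(16 c_1 c_2)$ is precisely what one needs so that the interval $\bigl(4 c_1 L,\, \sigma/(4 c_2)\bigr)$ is nonempty. I would fix any $t$ in this interval (for concreteness, the geometric mean $t=\sqrt{4c_1L\cdot\sigma/(4c_2)}$ works). At such $t$, Lemma \ref{lemma:key1} gives $\|d_{t,p,q}^2 - d_{t,p,q}\| < 1/4$, so the spectrum of $d_{t,p,q}$ misses the line $\mathrm{Re}(z)=1/2$ and $\Theta(d_{t,p,q})$ is a genuine idempotent in $M_{32k}(\mathcal{K}^+)$. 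Simultaneously, Lemma \ref{lemma:key2} yields $\|d_{t,p,q} - e\| < 1/4$.

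The next step is to connect $\Theta(d_{t,p,q})$ to $e$ through a linear homotopy. Set $d_s := (1-s) e + s \, d_{t,p,q}$ for $s \in [0,1]$. Since $e$ is an honest idempotent with spectrum $\{0,1\}$ and
\begin{equation*}
\|d_s - e\| \leq s \, \|d_{t,p,q} - e\| < 1/4,
\end{equation*}
the spectrum of each $d_s$ lies in the union of the two closed discs of radius $1/4$ around $0$ and $1$, hence stays uniformly bounded away from $\mathrm{Re}(z) = 1/2$. Thus $\Theta(d_s)$ is defined for every $s$ and gives a norm-continuous family of idempotents, and since $d_s - e \in M_{32k}(\mathcal{K})$ for all $s$, the path remains inside $M_{32k}(\mathcal{K}^+)$. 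This homotopy connects $\Theta(d_{t,p,q})$ to $\Theta(e) = e$, so $[\Theta(d_{t,p,q})] = [e]$ in $K_0(\mathcal{K})$.

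By Definition \ref{eq: theta}, the index pairing is $\langle [D], [p] - [q] \rangle = [\Theta(d_{t,p,q})] - [e]$, which therefore vanishes. The two quantitative lemmas did the heavy analytic lifting; the only substantive point remaining is the verification that the allowed range of $t$ is nonempty, which is exactly what the constant $16c_1c_2$ in the hypothesis encodes. I expect no real obstacle beyond being careful that the spectral estimate for the linear path $d_s$ is uniform in $s$ and that the path stays in the unitization of the correct ideal, both of which are immediate from the setup above.
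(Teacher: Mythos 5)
Your proof is correct, and it rests on exactly the same two quantitative estimates (Lemmas \ref{lemma:key1} and \ref{lemma:key2}) as the paper. The only genuine difference is in the final step: the paper fixes $t_0 = 4c_1L$, estimates $\|\Theta(d_{t_0,p,q})-e\|<1$ directly from the contour integral \eqref{eq:holo}, and then invokes the standard criterion (\cite[Proposition 4.3.2]{Blackadar}) that two idempotents at distance less than $1/\|2f_1-1\|$ are equivalent; you instead choose $t$ strictly inside the nonempty interval $(4c_1L,\ \sigma/(4c_2))$ and exhibit an explicit path of idempotents $\Theta\bigl((1-s)e + s\,d_{t,p,q}\bigr)$, $s\in[0,1]$, joining $e$ to $\Theta(d_{t,p,q})$. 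Both routes are standard ways of concluding $[\Theta(d_{t,p,q})]=[e]$ once $d_{t,p,q}$ is close to $e$. Your homotopy argument avoids citing the Blackadar criterion, at the price of needing the spectral-perturbation step for $d_s$; that step is valid here because $e$ is a \emph{self-adjoint} projection, so $\|(e-\lambda)^{-1}\| = 1/\operatorname{dist}(\lambda,\{0,1\})$ and the Neumann-series argument for the resolvent of $d_s$ indeed pushes its spectrum into the $1/4$-neighborhoods of $0$ and $1$. You are also a bit more careful than the paper on one detail: by taking $t$ strictly greater than $4c_1L$ you stay inside the range where Definition \ref{eq: theta} declares $\Theta(d_{t,p,q})$ to be a genuine idempotent, whereas the paper's choice $t_0=4c_1L$ sits on the boundary (harmless there, since Lemma \ref{lemma:key2} alone controls the spectrum, but your choice is cleaner).
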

\begin{proof}
	Let us retain the same notation from  Definition \eqref{eq: theta}. We have
	$$\Theta(d_{t,p,q}) = \frac{1}{2\pi i} \int_{\gamma}(d_{t,p,q} - \xi)^{-1} d\xi,$$
	with $\displaystyle \gamma = \{z \in \mathbb{Z} : |z-1| =\frac{1}{2}\}$ for $t> 4c_1L$.
	
	% Now we assume by contradiction $$L < \frac{\sigma}{16c_1c_2}.$$
%Picking $t = 4c_1L$, \textcolor{red}{+$\varepsilon$}
Let $t_0 = 4c_1L$, we have
\begin{equation}\label{eq: less_than_1/4}
	 		\|d_{t_0,p,q}-e\|  \leq  \frac{c_2t_0}{\sigma}
	\leq  \frac{c_2 \cdot 4c_1 L}{\sigma} < \frac{1}{4}.
\end{equation}
Moreover, we have 
	 \begin{align}
	 	\Theta(d_{t_0,p,q})-e=&
	 	\frac{1}{2\pi i}\int_\gamma ((d_{t_0,p,q}-\xi)^{-1}-(e-\xi)^{-1})d\xi \notag \\
	 	=&\frac{1}{2\pi i}\int_\gamma (d_{t_0,p,q}-\xi)^{-1}(e-d_{t_0,p,q})(e-\xi)^{-1}d\xi. \label{eq:holo}
	 \end{align}
	 Note that
	 $$
	 (d_{t_0,p,q}-\xi)^{-1}=(e-\xi)^{-1}(1+(d_{t_0,p,q}-e)(e-\xi)^{-1})^{-1}.
	 $$ 
	 For any $ \xi \in \gamma = \{z : |z-1| = \frac{1}{2}\}$, we have
	 \begin{equation}\label{eq: estimate_on_gamma}
	 	\|(e-\xi)^{-1}\|\leqslant 2 \textup{ and } \|(d_{t_0,p,q}-\xi)^{-1}\|<4.
	 \end{equation}
	 Applying the inequalities  (\ref{eq: less_than_1/4}) and (\ref{eq: estimate_on_gamma}) to the integral $\eqref{eq:holo}$,  we have 
	 \begin{equation}\label{eq:idemestimate}
	 	\|\Theta(d_{t_0,p,q})-e\| <1.
	 \end{equation}
	 %The length of the circle $\Gamma$ is $\pi$.
	 Recall that if two idempotents $f_1$ and $f_2$ in a $C^\ast$-algebra  satisfy  the inequality 
	 \[ \|f_1 - f_2\| < \frac{1}{\|2f_1 - 1\|}, \]
	 then $f_1$ is equivalent to $f_2$; see \cite[Proposition  4.3.2]{Blackadar}. In our case, since $e$ is a projection, we have 
	 \[ \|2e-1\| = 1. \]	In particular, the inequality in line $\eqref{eq:idemestimate}$ implies that 
	 \[ \|\Theta(d_{t_0,p,q})-e\|<\frac{1}{\|2e -1\|}. \]
	 It follows that $\Theta(d_{t_0,p,q})$ is equivalent to $e$. 
	 Thus $[\Theta(d_{t_0,p,q})]-[e] = 0 \in K_0(\cK)$. 
	 This implies $\langle [D], [p]-[q] \rangle = 0$.
	 \end{proof}
	 
	 %This contradicts the assumption that  $[\Theta(d_{t_0,p,q})]-[e]$ is nonzero in $K_0(\cK)$. Hence, we conclude that $$L \geq \frac{\sigma}{16c_1c_2}.$$ 
	 
	 %\end{proof}
\begin{remark}\label{remark:Scalar_curvatyre_Lip}
	Equivalently, we may state Proposition \ref{thm: Scalar_curvatyre_Lip} as follows: if the index pairing $\langle [D], [p]-[q] \rangle$ is non-zero, then 
		 \begin{equation}
		L \geq \frac{\sigma}{16c_1c_2}.
	\end{equation}
\end{remark}

\subsection{Proof of the main theorem}
Now let us prove Theorem \ref{thm:main}.
\begin{proof}[Proof of Theorem \ref{thm:main}]
Let us first prove the even dimensional case.  Let $(M,g)$ be a complete, non-compact spin Riemannian manifold of even dimension. For a given $r_0 > 0$, suppose $\mathcal{D}(r_0) < \fillrad_{\mathbb{Q}}(M) $. By the definition of filling radius in Definition \ref{def: filling_radius}, $[M]$ is  non-zero in $H^{\lf}_n(N_{\mathcal{D}(r_0)}(M; \mathbb{Q}))$. Let $f_{r_0}$ be the map constructed in line \eqref{eq: map_to_nerve} and  $g_{r_0}$ the corresponding map constructed in Lemma \ref{lemma: homotopy_equivalent}. In particular,  we have 
$$g_{r_0}f_{r_0} \textup{ is homotopic to } i\colon M\to N_{\mathcal D(r_0)}(M).$$
Consequently, $$[M] = i_{*}([M]) = (g_{r_0})_*\left((f_{r_0})_*([M])\right).$$
 Since $[M]$ is  non-zero in $H^{\lf}_n(N_{\mathcal{D}(r_0)}(M; \mathbb{Q}))$, it follows that $(f_{r_0})_*([M])$ is non-zero in $H^{\lf}_n(\mathcal N_{r_0}; \mathbb{Q})$.

The Dirac operator $D$ of $M$ defines a $K$-homology class $[D]$ in $K_0(M)$. 
Under the natural isomorphism\footnote{This isomorphism is induced by the Chern character map. } $K_0(M)\otimes \mathbb Q\cong \oplus_{even}H^{\lf}_*(M,\mathbb Q)$, the top degree component of $[D]$ in $H^{\lf}_n(M,\mathbb Q)$ coincides with the fundamental class $[M]\in H^{\lf}_n(M,\mathbb Q)$. For example, see the Poincar\'{e} duality theorem in \cite[Section 9.6]{Willett_Yu_Book_higher_index_theory}. In particular,  we conclude that $(f_{r_0})_*([D])$ is a nonzero element in  $K_0(\mathcal N_{r_0})$, since its top degree component in $H^{\lf}_n(\mathcal N_r,\mathbb Q)$ is equal to the non-zero element $(f_{r_0})_*([M])$ in $H^{\lf}_n(\mathcal N_r,\mathbb Q)$. Since the index pairing $K_0(\mathcal N_{r_0})\otimes K^0(\mathcal N_{r_0}) \to \mathbb Z$ is (rationally) nondegenerate,   there exists a  $K$-theory class $\beta\in K^0(\mathcal N_{r_0})$ such that
\[\langle (f_{r_0})_*([D]), \beta \rangle \neq 0.\]

By  Theorem \ref{thm: K_rep_theorem}, there exist $L_m$-Lipschitz projections $p, q\in M_k(C_0(\mathcal N_{r_0})^+)$ for some $k\in\mathbb N$ such that $$ \beta = [p]- [q]\in K^0(\mathcal N_{r_0}).$$ 	It follows that
$$0 \neq \langle (f_{r_0})_*([D]), \beta \rangle = \langle [D], f_{r_0}^*([p]-[q]) \rangle.$$
	
Recall the  inequality \eqref{eq: lip_cube} (see also the discussion after Definition \ref{def: finite_asymptotic_dimension} for more details):
	\begin{equation} \label{eq: Lip_estimate}
		\displaystyle	\lip(f_{r_0}) \leq \frac{(m+1)^3}{r_0}.
	\end{equation}
It follows that 
 $$\lip(f_{r_0}^*(p))\leq \frac{(m+1)^3L_m}{r_0} \text{ and } \lip(f_{r_0}^*(q))\leq \frac{(m+1)^3L_m}{r_0}.$$ 
Now by applying Theorem \ref{thm: Scalar_curvatyre_Lip}, we have 
	$$ \displaystyle \frac{(m+1)^3L_m}{r_0} \geq \frac{{\sigma}}{16c_1c_2}.$$
Equivalently, we have 
	$$ r_0 \leq \frac{16c_1c_2 (m+1)^3L_m}{{\sigma}}.$$
	To summarize, we have shown that
	\[ \textup{if } r_0 > \frac{16c_1c_2 (m+1)^3L_m}{{\sigma}}, \textup{ then } \mathcal D(r_0) \geq \fillrad_{\mathbb{Q}}(M). \]
	In other words, we have 
	 $$ \displaystyle \fillrad_{\mathbb{Q}}(M) \leq \mathcal{D} \left(\frac{16c_1c_2 (m+1)^3L_m}{{\sigma}}\right).$$
	This proves the theorem for the even dimensional case.

	Now assume that $M$ is odd dimensional and then consider the direct product space $(M \times \mathbb{R}, g+dt^2)$. Since the scalar curvature of $g$ is uniformly bounded below by $\sigma^2$, it follows that the scalar curvature of $g+dt^2$ is also bounded below by  $\sigma^2$. Moreover, the asymptotic dimension of $M\times \mathbb R$ equals $m+1$, where $m$  is the asymptotic dimension of $M$. This reduces the odd dimensional case to the even dimensional case. In particular, the same argument above shows that 	$$\fillrad_{\mathbb{Q}}(M \times \mathbb{R}) \leq \mathcal{D} \left(\frac{16c_1c_2 (m+2)^3L_{m+1}}{{\sigma}}\right).$$
	%Now we define $f: M \times \mathbb{R} \rightarrow M$ is the projection map $f(x,t) = x$ for any $(x,t) \in  M \times \mathbb{R}$ with $\deg(f) = 1$, then $f_*([M\times \mathbb{R}]) = [M]$???????????????????? Still need to revise !!!!!!. 
	Now by Lemma  \ref{remark: fillrad_of_direct_roduct}, we conclude that
	$$\fillrad_{\mathbb{Q}}(M) = \fillrad_{\mathbb{Q}}(M \times \mathbb{R}) \leq \mathcal{D}\left(\frac{16c_1c_2 (m+2)^3L_{m+1}}{{\sigma}}\right).$$
	This finishes the proof of Theorem \ref{thm:main}.
\end{proof}

While the proof of Theorem \ref{thm:main} does not rely on the (coarse) Baum--Connes conjecture, the techniques used in the proof are  closely related to those developed for the (coarse) Baum--Connes conjecture \cite{Alain_ndg, Kasparov_equi_kk,bc_geometric_k,ConnesNCG,guentner2022dynamical,willett2023uct}. In fact, a counterexample to Conjecture \ref{conj: gromov_filling_radius_conjecture} could potentially lead to counterexamples to the Baum-Connes conjecture. 

%In this context, we pose a slightly more general question than the main theorem in this article. The literatures \cite{guentner2022dynamical,willett2023uct,gty_geometric,Yucoarseembed} may provide insight for this question.
%	\begin{question}		Let $(M^n,g)$ be a complete, non-compact, spin Riemannian manifold with $Sc(g) \geq \sigma^2 >0$. Assume that $M$ coarsely embeds into a Hilbert space, that is, there exists two non-decreasing functions $\rho_1, \rho_2$ and a map $f: M \rightarrow H$ such that		\[\rho_1(d(x,y)) \leq d_H(f(x), f(y)) \leq \rho_2(d(x,y))\]		with $\lim_{x\to\infty}\rho_1(x)=\infty$. Does there there exist a constant $c:= c(\sigma, \rho_1, \rho_2)$ such that $\fillrad_{\mathbb{Q}}(M,g) \leq c$?	\end{question}

\appendix

\section{Metric invariants of Riemannian manifolds} \label{appdendix: metric_invariant}

In this appendix, we review a list of metric invariants of Riemannian manifolds and their relations with each other. We also briefly explain how these metric invariants are connected with some of the recent developments on metric invariants and positive scalar curvature problems. We refer the reader to  \cite{gromovfilling,gromovmetricstructure,MR2373013, Gromovinequalities2018, gromov2023_injectivity_radius, Gromovadozen,MR3622232,MR2827817,MR2753599} for more systematic studies of these metric invariants.

\subsection{Metric invariants}

We first recall the definition of Uryson width due to Gromov \cite{gromovmetricstructure}. 

\begin{definition} \label{def: width}
	Suppose that $(M^n,g)$ is a  Riemannian manifold, the  $k$-th Uryson width of $(M,g)$ is defined as
	\begin{equation}
		\width_{k}(M) = \inf_{f, P^k} \ \sup_{p \in P^k}\{\text{diam}(f^{-1}(p)) \mid  f\colon  M \rightarrow P^k\}
	\end{equation}
	where $P^k$ is a locally finite $k$-dimensional polyhedron and $f$ is a continuous map from $M$ to $P^k$.  We then define the macroscopic dimension of $M$ to be 
	\[\dim_{\macro}(M) = \inf_{k} \{k: \width_k(M) < \infty\}.\]
\end{definition}

The Uryson width can be viewed as a generalization of the notion of diameter of Riemannian manifolds. Intuitively speaking,  the $k$-th Uryson width of $M$ quantifies the shortest ``distance'' from the $n$-dimensional Riemannian manifold $(M^n, g)$ to a locally finite simplical complex  $P^k$ of dimension $k$. In particular,  $\width_0(M) = \diam(M)$, where $ \displaystyle \diam(M) = \sup_{p, q \in M} {d_g(p,q)}$ is the diameter of $(M, g)$.

\begin{lemma} \label{lemma: width_sequence}
If  $(M^n,g)$ is a complete Riemannian manifold, then
	\begin{enumerate}[label=$(\arabic*)$]
		\item \label{eq: width_monotone_sequence} $\width_{k+1}(M) \leq \width_{k}(M)$ for any $k \geq 0$; 
		\item $\width_0(M) = \diam(M)$ and $\width_k(M) =0$ for $k \geq n$.
	\end{enumerate}
\end{lemma}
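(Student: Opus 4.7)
The lemma is a direct unpacking of the definition of Uryson width, so the plan is to verify each item by exhibiting or modifying a map to a polyhedron of the appropriate dimension.

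For part \ref{eq: width_monotone_sequence}, the plan is to observe that every continuous map $f\colon M\to P^k$ to a locally finite $k$-dimensional polyhedron is also a continuous map to a locally finite $(k{+}1)$-dimensional polyhedron, since a $k$-dimensional polyhedron is a fortiori a polyhedron of dimension at most $k+1$. (If one prefers to realize the target as a genuine $(k{+}1)$-dimensional complex, one may attach a single $(k{+}1)$-simplex at an isolated vertex of $P^k$ without altering any fiber of $f$.) Thus the infimum defining $\width_{k+1}(M)$ is taken over a collection of maps that contains those used to define $\width_k(M)$, and the desired inequality follows.

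For the first identity in part (2), I will use that $M$ is connected (as stated at the start of Section \ref{sec:fillingradius}). A $0$-dimensional locally finite polyhedron $P^0$ is a discrete set of points, so any continuous map $f\colon M\to P^0$ is constant. Hence exactly one fiber is nonempty and equals $M$, giving
$$\sup_{p\in P^0}\diam(f^{-1}(p))=\diam(M).$$
Taking the infimum over all such $f$ and $P^0$ yields $\width_0(M)=\diam(M)$.

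For the second identity in part (2), I will invoke the classical theorem of Cairns--Whitehead that every smooth $n$-manifold admits a smooth triangulation, giving a homeomorphism $\varphi\colon M\xrightarrow{\cong} P^n$ onto a locally finite $n$-dimensional simplicial complex. Since $\varphi$ is a homeomorphism, every fiber of $\varphi$ is a single point and
$$\sup_{p\in P^n}\diam(\varphi^{-1}(p))=0,$$
so $\width_n(M)=0$. Combining this with the monotonicity statement from part \ref{eq: width_monotone_sequence} gives $0\leq \width_k(M)\leq \width_n(M)=0$ for all $k\geq n$, completing the proof. No step here presents a genuine obstacle; the only mild subtlety is appealing to the existence of a \emph{locally finite} smooth triangulation in the non-compact case, which is standard.
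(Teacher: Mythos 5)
Your proof is correct and follows essentially the same route as the paper: monotonicity from the definition, the constant map (together with connectedness) for $\width_0(M)=\diam(M)$, and a locally finite triangulation for $\width_n(M)=0$. You are slightly more careful than the paper in noting that connectedness forces \emph{every} map to a $0$-dimensional polyhedron to be constant, so that $\width_0(M)\geq\diam(M)$ as well, but the substance is identical.
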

\begin{proof}
	First, it follows from definition that $\width_{k+1}(M) \leq \width_{k}(M)$ for any $k$. For $\width_n(M)$, let us fix a locally finite triangulation of $M$ and denote the resulting simplicial complex by $P$. Then it is clear that the identity map from $M$ to $P$ implies that $\width_n(M) =0$. It follows by part (1) that $\width_{k}(M) = 0$ for all $k \geq n$. For $\width_0(M)$, we consider the constant map from $M$ to a single point, which implies that $\width_0(M) = \diam(M)$. 
\end{proof}

\begin{lemma} \label{thm: uryson_with_equivalent}
	Suppose that $(M^n,g)$ is a complete Riemannian manifold. For any given $W > 0$, the following statements are equivalent:
	\begin{enumerate}[label=$(\arabic*)$, ref=\arabic*]
		\item \label{item: width_def} $\width_k(M) < W$,
		\item \label{item: width_cover} There exists an open cover $\mathcal{C}$ of $M$ such that 
		$\diam(\mathcal{C}) < W $ and  the multiplicity of $\mathcal{C}$ is at most $k+1$.
	\end{enumerate}
	\begin{proof}
	 We first prove $(\ref{item: width_def}) \Rightarrow (\ref{item: width_cover})$. Since $\width_k(M) < W$, it follows from  the definition of $\width_k(M)$ that   there exists a $k$-dimensional locally finite simplicial complex  $P^k$ and a continuous map  $f\colon M \rightarrow P^k$ such that $$\diam(f^{-1}(p)) < W- 2\varepsilon,$$ 
	 for any $p \in P$, where $\varepsilon$ is a sufficiently small positive constant. We see that for each   $p\in P$, there exists $r_p >0$ such that the $$\diam(f^{-1}(B(p,r_p))) < W-\varepsilon,$$ where $B(p, r_p) = \{q \in P : \dist(p,q) < r_p\}$. Note that $\{B(p,r_p)\}_{p \in M}$ forms an open  cover of $P$. Since the $\dim(P)=k$, the cover $\{B(p,r_p)\}_{p \in M}$ admits a refinement open cover  $\mathcal{C} = \{C_i\}_{i \in I}$ of $P$ such that the multiplicity of $\mathcal{C}$ is no greater than $k+1$. Since $\diam(f^{-1}(B(p,r_p))) < W - \varepsilon$ for any $p \in P$ and any member $C_i \in \mathcal{C}$ is contained in a member of $\{B(r,r_p)\}_{p \in M}$, the collection $\mathcal{C}_M = \{f^{-1}(C_i)\}_{i \in I}$ forms an open cover of $M$ with $$\diam(\mathcal{C}_M) \leq W-\varepsilon < W,$$ which is the desired open cover in (\ref{item: width_cover}).
	 
	 Now we prove $ (\ref{item: width_cover}) \Rightarrow (\ref{item: width_def})$. Suppose that $\mathcal{C} = \{C\}_{i\in I}$ is an open cover of $M$ such that the $\diam(\mathcal{C}) < W$ and the multiplicity of $\mathcal{C}$ is no greater than $k+1$. Let  $\mathcal{N}$ be the never associated with the cover $\mathcal{C}$ and $\varphi\colon M \rightarrow \mathcal{N}$ is the map constructed as in line (\ref{eq: map_to_nerve}). Then for each $p\in \mathcal N$,  $\diam(\varphi^{-1}(p)) < W$ since $\varphi^{-1}(p) \subset  C_i \subset \mathcal{C}$ for some $i\in I$. Moreover, $\mathcal{N}$ is a simplicial complex of dimension $k$.  It follows that  $\width_k(M) < W$.
	\end{proof}
\end{lemma}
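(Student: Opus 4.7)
The plan is to prove the equivalence directly via two natural constructions: starting from a map to a $k$-dimensional polyhedron, manufacture a good cover by pulling back a suitable cover of $P^k$; and starting from a good cover, produce a map to its nerve. Both directions exploit the relation between the multiplicity of covers and the Lebesgue dimension of simplicial complexes.

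For the direction $(\ref{item: width_def}) \Rightarrow (\ref{item: width_cover})$: by definition of $\width_k$, I would fix a continuous $f\colon M \to P^k$ to a $k$-dimensional locally finite polyhedron with $\sup_{p\in P^k}\diam(f^{-1}(p)) < W - 2\varepsilon$ for some small $\varepsilon > 0$. Using continuity, around each $p \in P^k$ I would choose a ball $B(p, r_p)$ small enough that $\diam(f^{-1}(B(p, r_p))) < W - \varepsilon$. The resulting open cover $\{B(p,r_p)\}$ of $P^k$ can be refined to an open cover $\mathcal{C}^\prime = \{C_i\}$ of $P^k$ of multiplicity at most $k+1$, since the Lebesgue covering dimension of a $k$-dimensional simplicial complex is $k$. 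Pulling back via $f$ gives $\mathcal{C}_M = \{f^{-1}(C_i)\}$, whose multiplicity is preserved (preimages of disjoint sets are disjoint) and whose diameter is $\leq W - \varepsilon < W$.

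For the reverse direction $(\ref{item: width_cover}) \Rightarrow (\ref{item: width_def})$: given an open cover $\mathcal{C} = \{U_i\}$ of $M$ with $\diam(\mathcal{C}) < W$ and multiplicity $\leq k+1$, I would let $\mathcal{N}$ be its nerve. The multiplicity bound forces $\mathcal{N}$ to be a simplicial complex of dimension at most $k$, since no $(k+2)$-fold intersection is nonempty. The partition-of-unity construction from line \eqref{eq: map_to_nerve} yields a continuous map $\varphi\colon M \to \mathcal{N}$. For any $p \in \mathcal{N}$ lying in the open simplex spanned by vertices corresponding to $U_{i_0}, \dots, U_{i_\ell}$, a point $x \in \varphi^{-1}(p)$ must lie in every $U_{i_j}$ (since the coefficient functions $\varphi_{r,i_j}$ vanish outside $U_{i_j}$). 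Thus $\varphi^{-1}(p) \subset U_{i_0}$, giving $\diam(\varphi^{-1}(p)) \leq \diam(U_{i_0}) < W$, which yields $\width_k(M) < W$.

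The main obstacle is the refinement step in the forward direction: one needs the fact that any open cover of a $k$-dimensional polyhedron admits a refinement of multiplicity $\leq k+1$. This is the standard theorem identifying Lebesgue covering dimension with the combinatorial dimension of a simplicial complex, and would likely be invoked as a known fact rather than reproved. Everything else is essentially bookkeeping between partitions of unity, nerve constructions, and pullbacks of covers.
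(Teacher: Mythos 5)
Your proposal is correct and follows essentially the same approach as the paper: in the forward direction you shrink to a ball cover of $P^k$, refine to multiplicity $k+1$ via the Lebesgue dimension of a $k$-complex, and pull back; in the reverse direction you map to the nerve via the partition-of-unity formula from line \eqref{eq: map_to_nerve} and observe that each fiber lies in a single cover element. The only cosmetic difference is that you make explicit the reason $\varphi^{-1}(p)\subset U_{i_0}$ (the coefficient functions vanish off their sets), which the paper leaves implicit.
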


\begin{remark}
	By comparing  Lemma \ref{thm: uryson_with_equivalent} with Remark \ref{rmk:asysmptotic}, we see that 
	\begin{equation}
	\dim_{\macro}(M) \leq \dim_{asy}(M)
	\end{equation}
	where $\dim_{asy}(M)$ is the asymptotic dimension of $(M, g)$.
\end{remark}
\begin{lemma}[cf. {\cite[4.C]{Gromov5fold}}]
	If $(M^n,g)$ is complete Riemannian manifold, then
	\begin{equation}\label{eq: fillrad_width}
		\fillrad(M) \leq \frac{1}{2} \width_{n-1}(M).
	\end{equation}
\end{lemma}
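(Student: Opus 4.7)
The approach is to exploit the characterization of $\width_{n-1}(M) < W$ provided by Lemma \ref{thm: uryson_with_equivalent}: the existence of an open cover $\mathcal{C} = \{U_i\}_{i \in I}$ of $M$ with $\diam(\mathcal{C}) < W$ and multiplicity at most $n$. For any such $W > \width_{n-1}(M)$, the nerve $\mathcal{N}$ of such a cover is a locally finite simplicial complex of dimension at most $n-1$, and the partition-of-unity map $\varphi \colon M \to \mathcal{N}$ from line \eqref{eq: map_to_nerve} is continuous. The plan is to construct a lift $g \colon \mathcal{N} \to L^\infty(M)$ so that $g \circ \varphi$ is uniformly $(W/2)$-close to the Kuratowski embedding $\Psi$. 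The straight-line homotopy between them will stay within $N_{W/2}(\Psi(M))$ and identify $\Psi_*[M]$ with $(g \circ \varphi)_*[M]$ in $H_n^{\lf}(N_{W/2}(\Psi(M)))$; since $\dim \mathcal{N} \leq n-1$ forces $H_n^{\lf}(\mathcal{N}; \mathbb{Z}) = 0$, the latter class vanishes, which would yield $\fillrad(M) \leq W/2$.

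The key device is a Chebyshev-center lift, which is exactly where the sharp factor $1/2$ comes from. For each vertex $v_i$ of $\mathcal{N}$ corresponding to $U_i$, I would set
$$ g(v_i)(y) \coloneqq \tfrac{1}{2}\Bigl(\sup_{x \in U_i} \Psi(x)(y) + \inf_{x \in U_i} \Psi(x)(y)\Bigr), \qquad y \in M, $$
and extend $g$ affinely over each simplex using the vector-space structure of $L^\infty(M)$. Since $\Psi$ is isometric, for every $x' \in U_i$ and every $y \in M$ the pointwise oscillation $\sup_x \Psi(x)(y) - \inf_x \Psi(x)(y)$ is bounded by $\diam(U_i) < W$, which yields $\|g(v_i) - \Psi(x')\|_\infty \leq W/2$. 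For any $x \in M$ with $\varphi(x) = \sum_i \alpha_i(x) v_i$, the fact that $\alpha_i(x) > 0$ forces $x \in U_i$, combined with the convexity of the norm, gives
$$ \|g(\varphi(x)) - \Psi(x)\|_\infty \leq \sum_i \alpha_i(x) \|g(v_i) - \Psi(x)\|_\infty \leq W/2.$$
The same estimate applied to any point $x_0$ in the common intersection of the $U_i$'s indexing a simplex $\sigma$ shows $g(\sigma) \subset N_{W/2}(\Psi(M))$, so $g$ lands in the desired neighborhood.

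With these bounds in hand, the straight-line homotopy $H(x, t) = (1 - t) \Psi(x) + t \, g(\varphi(x))$ is a homotopy of maps $M \to N_{W/2}(\Psi(M))$, since $\|H(x,t) - \Psi(x)\|_\infty \leq t \cdot W/2$. The induced chain homotopy identifies $\Psi_*[M]$ with $(g \circ \varphi)_*[M] = g_*(\varphi_*[M])$ inside $H_n^{\lf}(N_{W/2}(\Psi(M)))$. Because $\varphi_*[M]$ lies in $H_n^{\lf}(\mathcal{N}; \mathbb{Z})$, which vanishes for simplicial dimensional reasons when $\dim \mathcal{N} \leq n-1$, we obtain $\Psi_*[M] = 0$ in $H_n^{\lf}(N_{W/2}(\Psi(M)))$, hence $\fillrad(M) \leq W/2$. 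Letting $W \searrow \width_{n-1}(M)$ completes the proof.

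The main obstacle is obtaining the sharp factor of $1/2$: a naive lift sending each vertex $v_i$ to an arbitrary point in $\Psi(U_i)$ only achieves a $W$-closeness bound and yields the weaker inequality $\fillrad(M) \leq \width_{n-1}(M)$. The Chebyshev-center construction takes essential advantage of the specific geometry of $L^\infty(M)$, where any subset of diameter $W$ is contained in a closed ball of radius $W/2$; this is a feature of the sup norm and fails in general metric spaces. A secondary bookkeeping point, particularly in the noncompact setting, is verifying that $\mathcal{N}$ can be taken to be locally finite and that the vanishing of simplicial chains in degree $n$ for $\dim \mathcal{N} \leq n-1$ transfers correctly to the singular locally finite homology used in Definition \ref{def: filling_radius}; this is standard but should be addressed explicitly.
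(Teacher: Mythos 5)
Your proof is correct, but it takes a genuinely different route from the one in the paper. The paper argues via the mapping cylinder: given a map $f\colon M \to P^{n-1}$ with fibers of diameter at most $W$, it forms $C_f = ([0,\omega]\times M)\cup P/\!\sim$ with $\omega = W/2$, equips $C_f$ with a metric extending $g$ in which every vertical segment $\{x\}\times[0,\omega]$ has length $\omega$, and observes that $C_f \subset N_\omega(M)$ while $[M]$ dies in $H_n^{\lf}(C_f)$ because $C_f$ deformation retracts onto the $(n-1)$-dimensional complex $P$. The factor $1/2$ comes from the triangle inequality through the apex of the cylinder over each fiber. Your argument instead passes to the nerve $\mathcal N$ of a cover with multiplicity $\leq n$ and small diameter (using the paper's Lemma \ref{thm: uryson_with_equivalent}), and obtains the factor $1/2$ from hyperconvexity of $L^\infty(M)$: any set of diameter $W$ admits a Chebyshev center at distance $\leq W/2$, realized explicitly by the midpoint of the pointwise sup and inf over each cover element. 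This is a clean observation that the paper does not make — indeed, the paper's own lift construction in Lemma \ref{lemma: homotopy_equivalent} picks an arbitrary point $p_U\in U$ per vertex and therefore only achieves the cruder bound $\diam(\mathcal{C})$, not $\diam(\mathcal{C})/2$. So your nerve-plus-Chebyshev-center approach is a genuine alternative, and it has the pedagogical advantage of isolating exactly where the factor $1/2$ comes from ($L^\infty$-geometry rather than the cylinder geometry). As you note, the one technical point to make airtight is local finiteness of $\mathcal N$ (so that $H_n^{\lf}(\mathcal N)=0$ for dimension reasons): the cover produced in Lemma \ref{thm: uryson_with_equivalent} is pulled back from a locally finite cover of the locally finite polyhedron $P$, so one can and should arrange the cover of $M$ to be locally finite as well, ensuring the nerve is a locally finite $(n-1)$-dimensional complex.
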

\begin{proof}
	Let $f\colon M \rightarrow P^{n-1}$ be a continuous map such that $$\diam(f^{-1}(p)) \leq \width_{n-1}(M) + 2\varepsilon .$$ 
	Choose $\omega =  (\width_{n-1}(M) + 2\varepsilon)/2$. Let 
	$$C_f = ([0,\omega]\times M) \cup P \,/\sim ,$$ be the mapping cylinder where 
	$(\omega,x)$ is identified with $f(x) $ for each $ x\in M $.
     We can extend the metric on $M$ to a metric on  $C_f$ such that the length of each path $\{x\} \times [0, \omega]$ is  $\omega$. Since $\dim(P) = n-1$, we conclude that $\fillrad(M) \leq \omega$. Since $\varepsilon$ is arbitrary, it follows that  $\fillrad(M) \leq \frac{1}{2}\width_{n-1}(M)$.
\end{proof}

Conversely, we propose the following conjecture.
\begin{conjecture} \label{conj: width_filling}
	There exists a universal constant $c_n$ such that 
	\begin{equation}
		\width_{n-1}(M) \leq c_n\fillrad(M)
	\end{equation}
	for all  complete Riemannian manifolds $(M^n,g)$. 
\end{conjecture}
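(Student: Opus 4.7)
The plan is to produce, from an $R$-filling of $M$ in its Kuratowski ambient space (where $R=\fillrad(M)$), a continuous map $f\colon M\to P^{n-1}$ whose fibers have diameter bounded by $c_n R$. Feeding this into Definition \ref{def: width} would then yield $\width_{n-1}(M)\leq c_n R$.

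First, by Definition \ref{def: filling_radius}, for any $\varepsilon>0$ there exists an $(n+1)$-chain $c$ in $N_{R+\varepsilon}(\Psi(M))\subset L^\infty(M)$ with $\partial c=\Psi_*([M])$. I would cover a neighborhood of $\Psi(M)\cup\spt(c)$ by a locally finite collection of balls of radius $\delta\ll R$ and take the associated nerve $\mathcal N$. After simplicial approximation, the filling pushes forward to a genuine simplicial $(n+1)$-chain $W$ in $\mathcal N$ with $\partial W$ equal to the image of $[M]$ and with $W$ contained in a combinatorial $R$-neighborhood of $M$. Composing $\Psi$ with the Kuratowski-to-nerve projection yields a $\delta$-near-isometric map $\iota\colon M\to\mathcal N$.

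Next I would reduce the target from an $n$-cycle to an $(n-1)$-complex by a controlled simplicial collapse of $W$ inward from its boundary. Formally, one collapses $W$ along free faces lying strictly in its interior until the remaining core has dimension at most $n$, providing a retraction of $\iota(M)$ onto a subcomplex of strictly lower cellular dimension; a further generic projection brings the target down to dimension $n-1$. The points identified in a fiber of $f$ are exactly those joined by a collapsed chain of simplices in $W$, and one hopes to bound the diameter of any fiber by the length of the longest such chain.

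The hard part is converting the global thickness bound $W\subset N_R(M)$ into a fiberwise diameter bound on $f^{-1}(p)$: there is nothing in the pure combinatorics of a collapse that prevents a long, thin finger of $W$ from identifying two points of $M$ whose ambient distance greatly exceeds $R$. Controlling this would likely require a Guth-style thick-thin decomposition of $W$, isolating the genuinely $(n+1)$-dimensional regions from pathological thin tendrils, and then ruling the tendrils out via volume estimates in the spirit of Liokumovich-Lishak-Nabutovsky-Rotman. Combined with Conjecture \ref{conj: gromov_filling_radius_conjecture}, the present statement would imply the full Uryson width version of Gromov's positive scalar curvature conjecture, so it is expected to be genuinely delicate, and a proof in full generality will likely require substantively new input in higher dimensions.
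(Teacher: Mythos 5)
This statement is an open conjecture in the paper, not a theorem: the authors explicitly write ``Conversely, we propose the following conjecture'' and offer no proof. So there is nothing in the paper for your argument to be compared against on equal footing.

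Your sketch is reasonable as a plan of attack, and you yourself correctly flag where it breaks down: the simplicial collapse of the filling chain $W$ gives no control whatsoever on the \emph{metric} diameter of the point-preimages, only on their combinatorial structure. A long, thin tendril of $W$ hugging $N_R(M)$ can be collapsed in a way that identifies two points of $M$ at arbitrary distance, and nothing in the nerve/simplicial-approximation setup or in the choice of collapsing order rules this out. The invocation of ``Guth-style thick-thin decomposition'' and ``volume estimates'' is a gesture toward the right circle of ideas, but you do not produce the needed inequality, and indeed this is precisely the difficulty that has kept the conjecture open (it would, together with Conjecture \ref{conj: gromov_filling_radius_conjecture}, imply a version of Gromov's Uryson width conjecture). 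So what you have is an honest outline with an acknowledged, essential gap, not a proof; and the paper does not supply a proof either, since it poses the statement as a conjecture.
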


Moreover, motivated by Llarull's theorem \cite{Llarull_sharp}, we introduce a metric invariant as follows.
\begin{definition}
	Given a   complete Riemannian manifold $(M^n,g)$, we define
	\begin{equation}
		\radsphere(M) = \inf_{f, R}\{R \mid f\colon  M \rightarrow \mathbb{S}^n(R) \textup{ with }\lip(f) \leq 1 \textup{ and } \deg(f) \neq 0\},
	\end{equation}
	where  $f$ is assumed to be locally constant near infinity, in particular, $\deg(f)$ is well-defined.
\end{definition}

\begin{lemma} \label{thm: radsphere_fillrad} If  $(M^n,g)$ is a complete Riemannian manifold, then 
			\begin{equation}
			\radsphere(M) \leq \frac{4}{\pi} \fillrad(M).
		\end{equation}
	\end{lemma}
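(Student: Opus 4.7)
The plan is to establish, for every $R > \fillrad(M)$, the existence of a 1-Lipschitz map $f\colon M \to \mathbb{S}^n(R')$ of nonzero degree (locally constant near infinity) with $R' = \frac{4R}{\pi}$. This yields $\radsphere(M) \leq \frac{4R}{\pi}$ for every such $R$, and taking the infimum as $R \searrow \fillrad(M)$ proves the inequality.

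First, I would realize the filling concretely. Since $R > \fillrad(M)$, Definition \ref{def: filling_radius} gives an $(n+1)$-chain $c$ in $N_R(\Psi(M)) \subset L^\infty(M)$ with $\partial c = \Psi_*[M]$. One geometrically realizes $c$ as a continuous map $F\colon P \to N_R(\Psi(M))$, where $P$ is a compact oriented $(n+1)$-dimensional pseudomanifold with $\partial P = M$ and $F|_M = \Psi$ (the Kuratowski embedding).

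Next, I would construct a continuous map $G\colon P \to \bar D^{n+1}(R') \subset \mathbb{R}^{n+1}$ into the closed Euclidean ball of radius $R'$ such that (a) $G$ sends $M = \partial P$ into $\mathbb{S}^n(R') = \partial \bar D^{n+1}(R')$, (b) $G|_M$ has degree one, and (c) $G|_M$ is 1-Lipschitz with respect to the intrinsic spherical metric. The desired map is then $f := G|_M$. The construction of $G$ uses two ingredients: the 1-Lipschitz ``radial'' coordinate $h(x) := d(x, \Psi(M))$ on $L^\infty(M)$ (which takes values in $[0,R]$ on the filling and vanishes on $\Psi(M)$), playing the role of a latitude coordinate on the sphere; and a finite-dimensional projection $\pi\colon L^\infty(M) \to \mathbb{R}^n$ supplying the angular coordinates. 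One composes $F$ with $(h, \pi)\colon L^\infty(M) \to \mathbb{R}^{n+1}$, rescales appropriately, and radially projects onto $\mathbb{S}^n(R')$.

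The constant $\frac{4}{\pi}$ emerges from matching scales: the sphere $\mathbb{S}^n(R')$ has pole-to-equator intrinsic distance $\frac{\pi R'}{2}$, and setting this equal to $2R$ (twice the filling radius, matching the full ``diameter'' over which the filling extends through $M$) yields $R' = \frac{4R}{\pi}$, which is exactly the Lipschitz-critical radius.

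The main obstacle is the careful choice of the finite-dimensional angular projection $\pi$: it must be 1-Lipschitz and yield a degree-one map on $M$, while $L^\infty(M)$ is infinite-dimensional and generically loses topological information under projection. Resolving this likely requires either a fine-net approximation of $M$ in the spirit of the nerve construction of Section \ref{section: asym}, or a Poincar\'e--Lefschetz-type duality relating the relative class $[P, \partial P] \in H_{n+1}(L^\infty(M),\, L^\infty(M)\setminus\Psi(M))$ to a cohomology class of $M$ dual to the desired sphere map.
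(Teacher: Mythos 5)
Your approach is fundamentally different from the paper's, and unfortunately it runs into both a directional problem and a constructive gap.

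The paper's proof is much shorter and goes in the \emph{opposite} logical direction: take a $1$-Lipschitz degree-nonzero map $f\colon M \to \mathbb{S}^n(\mathbf{r})$ with $\mathbf{r}$ arbitrarily close to $\radsphere(M)$; apply Lemma \ref{lemma:filling_radius_decreasing} to conclude $\fillrad(M) \geq \fillrad(\mathbb{S}^n(\mathbf{r}))$; and then invoke Gromov's exact computation $\fillrad(\mathbb{S}^n(\mathbf{r})) = \frac{\mathbf{r}}{2}\arccos(-\frac{1}{n+1})$, together with the elementary bound $\frac{1}{2}\arccos(-\frac{1}{n+1}) \geq \frac{\pi}{4}$. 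The constant $\frac{4}{\pi}$ is the limit of $\frac{2}{\arccos(-1/(n+1))}$ as $n\to\infty$, not a pole-to-equator scaling as in your heuristic. There is no filling to be realized geometrically and no sphere map to be built.

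Your plan, by contrast, attempts to \emph{construct} a $1$-Lipschitz degree-nonzero map $M\to\mathbb{S}^n(R')$ out of a filling of $M$. The first problem is the direction of the conclusion. Since post-composing with the radial contraction $\mathbb{S}^n(R)\to\mathbb{S}^n(R')$, $R'<R$, is $(R'/R)$-Lipschitz and preserves degree, the set of admissible target radii is downward-closed; the meaningful quantity (and the one the paper's proof actually uses, despite the $\inf$ in the displayed definition) is the \emph{supremum} of admissible $R$. Exhibiting a map to $\mathbb{S}^n(4R/\pi)$ therefore gives a \emph{lower} bound $\radsphere(M)\geq \frac{4R}{\pi}$, which as $R\searrow\fillrad(M)$ yields the reverse inequality $\radsphere(M)\geq\frac{4}{\pi}\fillrad(M)$ — not what is to be shown, and in general false. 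Second, even bracketing direction, the central step you flag as ``the main obstacle'' — producing a finite-dimensional, $1$-Lipschitz angular projection $\pi\colon L^\infty(M)\to\mathbb{R}^n$ so that $(h,\pi)\circ F|_M$ has nonzero degree — is genuinely unresolved and there is no reason it should be achievable: nothing in the definition of the filling furnishes such a projection, and $L^\infty(M)$ has no canonical finite-dimensional structure compatible with the topology of $M$. The right move here is the cheap one: monotonicity under $1$-Lipschitz degree-nonzero maps plus Gromov's sphere computation, exactly as Lemma \ref{lemma:filling_radius_decreasing} is set up to deliver.
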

	\begin{proof}
		For any small $\varepsilon >0$, we set $\mathbf r= \radsphere(M) - \varepsilon$. By the definition of $\radsphere(M)$,  there exists a map $f\colon  M \rightarrow \mathbb{S}^n(R)$ such that $\lip(f) \leq 1$ and $\deg(f) \neq  0$. Note that Lemma \ref{lemma:filling_radius_decreasing} implies that
		$$\fillrad(M) \geq \fillrad(\mathbb{S}^n(R)) $$
		It is known  that $\fillrad(\mathbb{S}^n(R)) = \frac{\mathbf r}{2}\arccos(-\frac{1}{n+1})$. See for example \cite[Page 8]{gromovfilling}. In particular, we have 
		$$\fillrad(M) \geq  \frac{\mathbf r}{2}\arccos(-\frac{1}{n+1}).$$
		Since $\varepsilon >0$ can be arbitrarily small, we have
		$$\fillrad(M) \geq \frac{\radsphere(M)}{2} \arccos(-\frac{1}{n+1}) .$$
		Note that  $ \displaystyle\frac{1}{2} \arccos(-\frac{1}{n+1}) \geq \frac{\pi}{4}$ for all $n \geq 1$. It follows that
				$$\fillrad(M) \geq \frac{\pi}{4} \radsphere(M).$$
	\end{proof}

Now we recall the definition of injectivity radius.

\begin{definition}
 Let $(M^n,g)$ be a  complete Riemannian manifold. 
	\begin{enumerate}
		\item  We define the injectivity radius of $M$ to be 	\[ \inj(M) =  \inf_{p \in M} \sup_{r} \{r \mid   \exp_p\textup{ is a diffeomorphism on } B(p, r) \subset T_pM \}.\]
		
		\item  We define the conjugate radius of $M$ to be 
		\begin{align*}
			 \conj(M) = \inf_{p \in M} \sup_r \{r\mid  \exp_p\textup{ is an immersion on } B(p, r) \subset T_pM \}.
		\end{align*}
		\item We define the convexity radius of $M$ to be 	\[ \conv(M) =  \inf_{p \in M} \sup_{r} \{r \mid  \textup{the metric ball }\mathcal B(p, r) \subset M \textup{ is convex}\}.\]
	\end{enumerate}
Here $\exp_p$ is the exponential map from the tangent space $T_pM$ to the manifold $M$ and $\mathcal{B}(p,r) = 
\{x \in M \mid  d_g(p,x) < r\}$ is the metric ball of radius $r$ centered at $p$. 
\end{definition} 	

It clear that  $$2 \conv(M) \leq \inj(M) \leq \conj(M).$$

\begin{lemma} \label{thm: inj_filling}
	If  $(M^n, g)$ is a complete Riemannian manifold, then 
		     \begin{equation}\label{thm: inj_fillrad}
		\inj(M) \leq (n+2) \fillrad(M) .
	\end{equation}
\end{lemma}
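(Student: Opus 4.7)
The plan is to argue by contradiction. Suppose $\inj(M) > (n+2) R$ for some $R > \fillrad(M)$; we aim to derive a contradiction by constructing a retraction from a suitable portion of the Kuratowski $R$-neighborhood back into a geodesic ball in $M$, and then invoking degree theory.

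By the definition of the filling radius, there exists a locally finite $(n+1)$-chain $c$ in $N_R(\Psi(M)) \subset L^\infty(M)$ with $\partial c = \Psi_*[M]$. Fix an arbitrary point $p \in M$. Since $\inj_p(M) > (n+2)R$, the exponential map $\exp_p$ is a diffeomorphism from the Euclidean ball $B(0,(n+2)R) \subset T_pM$ onto the geodesic ball $B(p,(n+2)R) \subset M$, and in particular the latter is convex. The core construction is a continuous retraction
\[
\pi \colon N_R(\Psi(M)) \cap \overline{B_{L^\infty}(\Psi(p), (n+1)R)} \longrightarrow B(p, (n+2)R)
\]
satisfying $\pi(\Psi(y)) = y$ whenever $d_M(p,y) \leq (n+1)R$. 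Every point $x$ in the domain lies within $L^\infty$-distance $R$ of some $\Psi(y)$, and by the triangle inequality $d_M(p,y)\leq(n+2)R$, so $y \in B(p,(n+2)R)$; this is precisely what the slack between the domain radius $(n+1)R$ and the ambient radius $(n+2)R$ buys us. Using $\exp_p^{-1}$ to identify $B(p,(n+2)R)$ with a convex Euclidean ball, one defines $\pi$ via a barycentric/center-of-mass combination in normal coordinates at $p$, weighted by a locally finite partition of unity subordinate to the cover of the domain by $R$-balls centered at points of $\Psi(M)$.

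Next, for generic $\varepsilon>0$, restrict $c$ to its intersection with $\overline{B_{L^\infty}(\Psi(p), (n+1)R-\varepsilon)}$ to obtain a chain $c'$; its boundary is the sum of $\Psi_*[B(p,(n+1)R-\varepsilon)]$ (inherited from $\partial c = \Psi_*[M]$) and chains supported in the $L^\infty$-sphere of radius $(n+1)R-\varepsilon$ around $\Psi(p)$. Pushing forward by $\pi$ yields a locally finite $(n+1)$-chain $\pi_*c'$ in the $n$-dimensional open set $B(p,(n+2)R) \cong \mathbb{R}^n$, whose boundary contains $[B(p,(n+1)R-\varepsilon)]$ plus terms whose image lies outside that smaller ball. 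This contradicts the nontriviality of the relative fundamental class $[B(p,(n+1)R-\varepsilon)]$ in $H_n\bigl(B(p,(n+2)R),\, B(p,(n+2)R)\setminus B(p,(n+1)R-\varepsilon)\bigr)\cong\mathbb{Z}$, detected by the degree of the radial map onto an enclosing sphere inside the normal chart.

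The main obstacle will be the precise construction of the retraction $\pi$, together with verifying the structure of $\partial(\pi_* c')$. Continuity of the center-of-mass construction across overlapping members of the partition of unity is the reason one must not use nearest-point projection (which is not well-defined on $L^\infty(M)$), and the choice of convex normal coordinates at $p$ is essential for the barycentric combination to land in $B(p,(n+2)R)$ and stay there. The assumption $(n+2)R < \inj(M)$ enters at exactly this step; any weakening of the ambient radius breaks the well-definedness of $\pi$. The extra factor of $1$ compared to the potentially sharper bound $(n+1)$ arises from the necessary triangle-inequality slack between the domain radius and the target geodesic ball radius.
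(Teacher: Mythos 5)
Your approach is genuinely different from the paper's: the paper constructs a global retraction $f\colon P \to M$ extending the identity on $M$, built skeleton by skeleton by filling each simplex inside a geodesic ball of radius below $\inj(M)$, and concludes $[M] = f_*[\partial P] = \partial f_*[P] = 0$ in $H^{\lf}_n(M)$, a contradiction. You instead localize at a basepoint $p$ and run a degree argument inside a single normal chart. The idea is reasonable, but there is a genuine gap in the construction of $\pi$. You assert $\pi(\Psi(y)) = y$ whenever $d_M(p,y) \leq (n+1)R$, and the degree argument at the end hinges on this (or at least on $\pi\circ\Psi$ being properly homotopic to the inclusion). A barycentric combination in normal coordinates, weighted by a partition of unity subordinate to the cover of the domain by $R$-balls centered at points of $\Psi(M)$, does not deliver this: at $x = \Psi(y_0)$ the weights are spread over an $R$-neighborhood of $y_0$ in $M$, so $\pi(\Psi(y_0))$ is a nontrivial convex combination of $\exp_p^{-1}(y)$ over $y$ near $y_0$, not $\exp_p^{-1}(y_0)$ itself. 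No continuous locally finite partition of unity can concentrate unit mass at $y_0$ at every point $\Psi(y_0)$ of the image $\Psi(M)$. One must either prove that $\pi\circ\Psi$ is properly homotopic to the inclusion into $B(p,(n+2)R)$ --- which would require controlling the angular distortion of $\exp_p^{-1}$ and is not automatic in the absence of curvature bounds --- or replace the partition-of-unity device by the simplicial one the paper uses, where the $0$-skeleton of a fine subdivision of the filling is sent to a nearest point of $M$ (so vertices already in $M$ are genuinely fixed) and the identity property then propagates inductively through the skeleta.

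A secondary issue is that ``restricting'' a locally finite singular chain $c$ to its intersection with a closed ball is not a well-defined operation on singular chains; one must first subdivide $c$ so finely that each simplex either lies in the domain of $\pi$ or misses the smaller ball, or else pass to a simplicial model of the filling as the paper does. Only then do the truncated chain $c'$, its boundary structure, and the final computation in relative homology make rigorous sense.
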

	\begin{proof}
		
		 Let $ R \leq \frac{\inj(M)}{n+2} - \delta$ for some small $\delta>0$. If  $P$  is an  $(n+1)$ dimensional locally finite  simplicial complex such that $M \subset P \subset N_{R}(M) \subset L^\infty(M)$, then we can subdivide $P$ into sufficiently small simplices such that any simplex $\triangle^{n+1} \subset P$, whose vertices are denoted by $p_0, p_1, \cdots, p_n$, satisfying $d(x_i, x_j) \leq 2\delta$ for all $0 \leq i, j \leq n+1$. In other words, we have extend the identity map $i\colon M \to M$ to a continuous map $f_0\colon M\cup P^{(0)} \to M$, where $P^{(0)}$ is the $0$-th skeleton  of $P$. From now on, we denote the $k$-th skeleton  of $P$ by $P^{(k)}$. Now we shall  inductively extend the map $i\colon M \to M$ to a continuous map $f\colon P \to M$. 
		 
		 Let $m_i\in M$ be a point that realizes the shortest distance between $p_i$ and $M$.  We connect $m_i$ and $m_j$ by a geodesic segment $\triangle^1(m_i, m_j)$. By mapping the $1$-simplex $\triangle^1(p_i, p_j)$ to $\triangle^1(m_i, m_j)$,  obtain the desired extension $f_1\colon M\cup P^{(1)} \to M$. 

		 Now we need to fill in each triangle formed by three edges, say,   $\triangle^1(m_0, m_1),  \triangle^1(m_1, m_2)$ and $\triangle^1(m_0, m_2)$.  First, in the previous step, it follows from the triangle inequality that   $d_g(m_i, m_j) < 2\delta + 2R$. By applying the triangle inequality again, we see that  each point on $\triangle_1(m_0, m_1)$ lies within distance $\frac{3(2R+2\delta)}{2}$ of $m_2$. It follows all three edges lie in the metric ball $\mathcal B(m_2, \frac{3(2R+2\delta)}{2})$ centered at $m_2$. Since  $\frac{3(2R+2\delta)}{2} < (n+2)(R+\delta)$, the exponential map $\exp_{m_2}$ is a diffeomorphism from $B(0, \frac{3(2R+2\delta)}{2})\subset T_{m_2}M $ to  $\mathcal B(m_2, \frac{3(2R+2\delta)}{2})$. By the contractibility of the ball, it follows that the map $f_1$ defined on  the three edges  $\triangle^1(p_0, p_1),  \triangle^1(p_1, p_2)$ and $\triangle^1(p_0, p_2)$ extends to the corresponding $2$-simplex $\Delta^2(p_0, p_1, p_2)$ in $P$. Denote the image of $\Delta^2(p_0, p_1, p_2)$ in $M$ by $\Delta^2(m_0, m_1, m_2)$.  Repeating this process for each triangle, we obtain the desired extension $f_2\colon M\cup P^{(2)} \to M$.

		 Now we want to fill in the nonsolid tetrahedron formed by, say,  $\Delta^2(m_0, m_1, m_2)$, $\Delta^2(m_1, m_2, m_3)$, $\Delta^2(m_0, m_1, m_3)$ and $\Delta^2(m_0, m_2, m_3)$. By using the triangle inequality, a similar argument as above shows that every point in $\Delta^2(m_0, m_1, m_2)$ lies within distance $2(2R+2\delta)$ of $m_3$. It follows all $\Delta^2(m_0, m_1, m_2)$, $\Delta^2(m_1, m_2, m_3)$, $\Delta^2(m_0, m_1, m_3)$ and $\Delta^2(m_0, m_2, m_3)$ lie in the metric ball $\mathcal B(m_3, 2(2R+2\delta))$ centered at $m_3$. Since $2(2R+2\delta) < (n+2)(R + \delta)$, the same argument in the previous inductive step shows that the map $f_2\colon \colon M\cup P^{(2)} \to M$ extends to a map $f_3\colon M\cup P^{(3)} \to M$.

		 By repeating the above steps inductively, we eventually obtain an extension of the identity map $i\colon M \to M$ to a continuous map $f\colon P \to M$, as long as $R \leq \frac{\inj(M)}{n+2} - \delta$.  
		This implies  $M$ can not bound any $(n+1)$ dimensional locally finite simplicial complex $P$ in $N_R(M)$. It follows that     
		\[\displaystyle \fillrad(M) \geq \frac{\inj(M)}{n+2}.\]
		This completes the proof. 
	\end{proof}

	To summarize, we have the following inequalities of the above metric invariants: 
	\begin{equation} \label{eq: series_of_metric_invariants}
		\frac{\inj(M)}{n+2} \leq \fillrad(M) \leq \width_{k}(M) \leq \diam(M), \textup{ for } 0\leq k \leq n-1,
	\end{equation}
\begin{equation} \label{eq: radshpere_fillingradius}
	\textup{ and } \radsphere(M) \leq \frac{4}{\pi} \fillrad(M).
\end{equation}

Motivated  by the above inequalities,  we propose the following conjecture.

\begin{conjecture} \label{conj: inj_radsphere}
	There exists a universal constant $c_n$ such that	
\begin{equation}
		\inj(M) \leq c_n \radsphere(M).
\end{equation}
for all complete Riemannian manifolds $(M^n,g)$. 
\end{conjecture}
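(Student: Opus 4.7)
The plan is to construct, for a base point $p\in M$ where the injectivity radius is nearly achieved, a $1$-Lipschitz map $f\colon M \to \mathbb S^n(R)$ of nonzero degree with $R$ proportional to $\inj(M)$, thereby witnessing $\radsphere(M) \geq \inj(M)/c_n$. Pick $p\in M$ with $\inj_p(M) \geq \inj(M)-\varepsilon$ and write $I = \inj_p(M)$, so that $\exp_p\colon B(0,I)\subset T_pM \to \mathcal B(p,I)\subset M$ is a diffeomorphism. Define $f$ by sending $M\setminus \mathcal B(p,I)$ to the south pole of $\mathbb S^n(R)$ and, for $x\in \mathcal B(p,I)$ with $v = \exp_p^{-1}(x)$, sending $x$ to the point on $\mathbb S^n(R)$ at spherical distance $\pi R|v|/I$ from the north pole in direction $v/|v|$. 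By construction $f$ is continuous, locally constant near infinity, and restricts to a homeomorphism from $\mathcal B(p,I)$ onto the complement of the south pole, so $\deg(f) = \pm 1$.

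Write $f = \phi\circ \exp_p^{-1}$ on $\mathcal B(p,I)$, where $\phi\colon B(0,I)\to \mathbb S^n(R)$ is the radial parameterization. In the polar frame the pullback of the sphere metric is $(\pi R/I)^2 dr^2 + R^2\sin^2(\pi r/I)\, d\omega^2$; comparing with $dr^2 + r^2 d\omega^2$ and using $\sin x \leq x$, both the radial and angular dilation factors are bounded by $\pi R/I$ on $(0,I]$. Choosing $R = I/\pi$ therefore makes $\phi$ a $1$-Lipschitz map from the flat ball to the sphere. Consequently, if $\exp_p^{-1}$ were also $1$-Lipschitz on $\mathcal B(p,I)$, then $f$ would be $1$-Lipschitz, and letting $\varepsilon\to 0$ would yield $\radsphere(M) \geq \inj(M)/\pi$, i.e.\ the conjecture with the universal constant $c_n = \pi$. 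By Rauch's comparison theorem (Cartan--Hadamard in the global form), $\exp_p^{-1}$ is indeed $1$-Lipschitz whenever the sectional curvature on $\mathcal B(p,I)$ is non-positive, so the conjecture holds in that setting.

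The main obstacle is that in the presence of positive sectional curvature $\exp_p^{-1}$ can be arbitrarily distance-expanding, so the naive construction fails in general. One potential fix is to replace the bare angular coordinate $v/|v|$ by a smoothed version, obtained for instance by averaging $\exp_p^{-1}(x)$ over a small neighborhood of $x$ via a heat-kernel or mollifier convolution, paying only a universal constant in the Lipschitz bound. A complementary strategy is to first prove the conjecture under a sectional curvature lower bound, where Toponogov's theorem gives a uniform Lipschitz bound on $\exp_p^{-1}$ over $\mathcal B(p,I)$, and then reduce the general case via a short-time Ricci-flow smoothing that alters $\inj(M)$ and $\radsphere(M)$ by at most universal factors. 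The truly delicate case is that of small injectivity radius caused by a short closed geodesic in an otherwise geometrically large manifold, where the exponential-chart construction is insufficient and the topology of the short loop must be exploited directly—plausibly by mapping a tubular neighborhood of the loop to an $\mathbb S^1$-factor of $\mathbb S^n(R)$ rather than to a geodesic ball.
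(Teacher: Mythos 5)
The statement you were asked to prove is labeled \textbf{Conjecture \ref{conj: inj_radsphere}} in the paper, not a theorem: the authors pose it as an open problem and offer no argument for it. There is therefore no proof in the paper to compare against; a correct proof here would be new mathematics rather than a reconstruction.

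As for the proposal on its own terms, the exponential-chart construction is the natural first attempt, and the Lipschitz analysis of the radial parametrization $\phi$ is done correctly: with $R = I/\pi$ the pullback of the round metric of $\mathbb S^n(R)$ under $\phi$ is dominated by the Euclidean metric on $B(0,I)\subset T_pM$, so $\phi$ is $1$-Lipschitz, and the degree of the resulting map is $\pm 1$. But the step you yourself flag is a genuine, fatal gap: $f = \phi\circ\exp_p^{-1}$ is $1$-Lipschitz only when $\exp_p^{-1}$ is distance non-increasing. By Rauch comparison that holds on domains of non-positive sectional curvature, but in a region of strictly positive curvature $\exp_p^{-1}$ can expand distances by an arbitrarily large factor, and no choice of $R$ depending only on $\inj(M)$ (or even on $\inj(M)$ and $n$) controls this. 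So the argument as written establishes the inequality only under a $\mathrm{sec}\le 0$ hypothesis on the ball $\mathcal B(p,\inj(M))$, which is far weaker than the universal statement in the conjecture.

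The proposed repairs do not close the gap. Mollifying $\exp_p^{-1}$ controls oscillation but not the pointwise dilation of the map, so it cannot produce a universal Lipschitz bound when the underlying map is genuinely expanding. A short-time Ricci-flow reduction would require quantitative control showing that the flow changes both $\inj$ and $\radsphere$ only by universal factors; no such statement is available, and short-time existence with geometry bounds is delicate precisely in the regime of small injectivity radius, which is the case of interest. The short-closed-geodesic scenario you mention is exactly the one where the exponential-chart picture breaks down entirely: the ball $\mathcal B(p,\inj(M))$ then bears no resemblance to a Euclidean ball, and there is no reason a nonzero-degree $1$-Lipschitz map to a sphere of comparable radius should factor through an exponential chart at all. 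Until one of these ideas is turned into an actual estimate, the statement remains a conjecture, as the paper says.
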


\subsection{Metric invariants and uniformly positive scalar curvature}
In this subsection, we review some classical results and conjectures on the connections between (uniformly) positive scalar curvature and the  various metric invariants that we considered in the previous subsection. 

Let us  start with the maximal diameter theorem of Myers and Cheng, which states that strictly positive Ricci curvature can control the diameter of the space. More precisely,

\begin{theorem}[\cite{cheng_eigenvalue,myers_diameter}]\label{mdt}
	If $(M^n,g) $ is a complete Riemannian manifold with Ricci curvature $\Ric(g) \geq n-1$, then $\diam(M) \leq \pi$, and equality holds if and only if $M$ is the unit round sphere $\mathbb S^n(1)$. 
\end{theorem}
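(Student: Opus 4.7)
The plan is to prove the diameter bound via the Myers argument using the second variation of arc length, and then to establish the rigidity statement by a Cheng-style maximum principle argument applied to a combination of distance functions from $p$ and $q$.

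For the diameter bound, I would argue by contradiction. Suppose $p,q\in M$ satisfy $d(p,q)=L>\pi$. By completeness and the Hopf--Rinow theorem, there is a minimizing unit-speed geodesic $\gamma\colon[0,L]\to M$ from $p$ to $q$. I would choose a parallel orthonormal frame $\{e_1,\dots,e_{n-1}\}$ along $\gamma$ normal to $\dot\gamma$, and test the index form against the variations $V_i(t)=\sin(\pi t/L)e_i(t)$, which vanish at the endpoints. A direct computation gives
\begin{equation*}
\sum_{i=1}^{n-1} I(V_i,V_i) = \int_0^L\left[(n-1)(\pi/L)^2\cos^2(\pi t/L) - \sin^2(\pi t/L)\,\Ric(\dot\gamma,\dot\gamma)\right]dt.
\end{equation*}
Using $\Ric\geq n-1$ and $\int_0^L\cos^2(\pi t/L)\,dt=\int_0^L\sin^2(\pi t/L)\,dt=L/2$, this sum is at most $(n-1)(L/2)((\pi/L)^2-1)$, which is strictly negative when $L>\pi$. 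Hence some $I(V_i,V_i)<0$, contradicting the minimality of $\gamma$. This yields $\diam(M)\leq\pi$, and by the same argument $M$ is compact.

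For the rigidity, I would assume $\diam(M)=\pi$ and fix $p,q\in M$ with $d(p,q)=\pi$. Set $r_p(x)=d(p,x)$ and $r_q(x)=d(q,x)$, and consider the function $f(x)=\cos r_p(x)+\cos r_q(x)$. The triangle inequality $r_p+r_q\geq\pi$ immediately gives $f\leq 0$ on $M$, with equality at the midpoint of any minimizing geodesic from $p$ to $q$, so $f$ attains its maximum $0$ at an interior point. The Laplacian comparison theorem under $\Ric\geq n-1$ gives $\Delta r_p\leq(n-1)\cot r_p$ (in the barrier sense off the cut locus), and the chain rule then yields $\Delta(\cos r_p)\geq -n\cos r_p$, with the analogous inequality for $r_q$. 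Thus $\Delta f\geq -nf\geq 0$, so $f$ is subharmonic, and the strong maximum principle (in the Calabi barrier formulation, to handle the cut locus) forces $f\equiv 0$, i.e.\ $r_p+r_q\equiv\pi$ on $M$.

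The final step is to pass from $f\equiv 0$ to an actual isometry with $\mathbb{S}^n(1)$. The identity $r_p+r_q\equiv\pi$ forces equality throughout the Laplacian comparison, and a standard upgrade (via the Bochner-type argument applied to $r_p$, or equivalently the rigidity case in Bishop--Gromov) promotes this to equality in the Hessian comparison: $\hess(r_p)$ restricted to vectors normal to $\nabla r_p$ equals $\cot(r_p)\cdot g$. This identifies the metric on $M\setminus\{p,q\}$ with the warped product $dr^2+\sin^2(r)\,g_{\mathbb{S}^{n-1}}$ via the exponential map at $p$, so $(M,g)$ is isometric to $\mathbb{S}^n(1)$. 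I expect the main technical obstacle to be the rigorous handling of the cut locus in the maximum principle step --- one cannot directly apply $\Delta f\geq 0$ because $r_p,r_q$ are only Lipschitz globally --- which requires the use of barrier subsolutions or a smoothing/Calabi trick, and then the final upgrade from equality of Laplacians to the warped product splitting.
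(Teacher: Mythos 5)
The paper states this theorem with citations to Myers and Cheng but does not reproduce a proof, so there is no in-paper argument for me to compare against. Your proposal is a correct rendition of the classical proof: the Myers second-variation estimate for the diameter bound, followed by Cheng's maximum-principle argument for rigidity. The second-variation computation and the inequality $\Delta(\cos r_p)\geq -n\cos r_p$ (via the Laplacian comparison $\Delta r_p\leq (n-1)\cot r_p$ off the cut locus) are both correct, as is the deduction $\cos r_p+\cos r_q\leq 0$ from $r_p+r_q\geq\pi$ together with $r_p,r_q\in[0,\pi]$. You correctly flag the genuine technical points: applying the strong maximum principle to the barrier-subharmonic function $f$ across the cut loci of $p$ and $q$ (Calabi's trick), and upgrading $r_p+r_q\equiv\pi$ to the warped-product identification. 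These are standard but need care; as stated, your outline is faithful to how the theorem is actually proved in the literature.
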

The above maximal diameter theorem is a rigidity type result in the context of Ricci curvature bounded from below. Intuitively speaking, it states  that uniformly positive Ricci curvature controls the diameter of the underlying space. Moreover, by applying the theorem to the universal cover, one sees that if a manifold admits  a complete Riemannian metric with uniformly positive Ricci curvature, then its fundamental group is finite. Generally speaking, scalar curvature, being the trace of the Ricci tensor, imposes much less constraints on the topology and geometry of the underlying space. However, in some cases surprisingly, rigidity type results persist in the context of scalar curvature bounded from below. See  \cite{Gromovadozen} and \cite{Gromov4lectures2019} for more systematic surveys on this subject. 

For example, the following theorem of Green shows that positive scalar curvature imposes constraints on the conjugate radius, hence  the injectivity radius.
\begin{theorem}[\cite{green_conjugate}] \label{green}
	Suppose that $(M^n,g)$ is a closed Riemannian manifold. If the scalar curvature  is uniformly positive $Sc(g) \geq n(n-1)$, then
	$
	\conj(M) \leq \pi,
	$
	and  equality holds if and only if $M$ is isometric to the unit round sphere $\mathbb{S}^n(1)$.
\end{theorem}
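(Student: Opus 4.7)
The plan is to argue by contradiction using the classical Berger-type averaging trick over the unit tangent bundle $UM$. Assuming $\conj(M) > \pi$, I would construct, on each unit-speed geodesic of length $\pi$, test variations whose index forms are strictly positive, then integrate against the Liouville measure on $UM$ to convert the integrated Ricci contribution into an integrated scalar curvature. The pointwise lower bound $\Sc(g) \ge n(n-1)$ will then force the integral to be nonpositive, giving the desired contradiction.

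In detail, for each $p\in M$ and unit $v \in T_pM$, I would parallel-transport an orthonormal basis $e_1,\dots,e_{n-1}$ of $v^\perp$ along $\gamma_v(t)=\exp_p(tv)$ for $t\in[0,\pi]$, forming parallel fields $E_1(t),\dots,E_{n-1}(t)$, and take variations $V_i(t) = \sin(t)\, E_i(t)$, which vanish at both endpoints. A short computation using the parallelism of each $E_i$ collapses the sum of the index forms to
\begin{equation*}
   \sum_{i=1}^{n-1} I(V_i, V_i) = \int_0^\pi \left[(n-1)\cos^2 t \,-\, \sin^2 t \cdot \Ric(\gamma_v'(t),\gamma_v'(t))\right] dt.
\end{equation*}
By the Morse index theorem, the assumption $\conj(M)>\pi$ ensures $I(V_i,V_i) > 0$ for every $v$ and every $i$, so this sum is strictly positive pointwise on $UM$.

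The next step is to integrate the above over $UM$ against the Liouville measure and apply Fubini. The $(n-1)\cos^2 t$ term integrates to $\frac{\pi(n-1)}{2}\vol(M)\vol(S^{n-1})$. For the Ricci term, invariance of the Liouville measure under the geodesic flow allows me to replace $\gamma_v'(t)$ by $v$ itself inside the integral. Combined with the fiberwise identity $\int_{U_pM} \Ric(w,w)\,dw = \frac{\vol(S^{n-1})}{n} \Sc(p)$ and $\int_0^\pi \sin^2 t\,dt = \pi/2$, the Ricci contribution becomes $\frac{\pi \vol(S^{n-1})}{2n}\int_M \Sc\, d\!\vol_g$, which by the hypothesis is at least $\frac{\pi(n-1)}{2}\vol(M)\vol(S^{n-1})$. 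Thus the total integral is $\le 0$, contradicting the pointwise positivity, and yielding $\conj(M) \le \pi$.

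The main obstacle is the rigidity case. If $\conj(M) = \pi$, the Morse index theorem only gives $I(V_i,V_i) \ge 0$; tracing equality through the integral forces $\Sc \equiv n(n-1)$ pointwise and every $V_i$ to be an honest Jacobi field. The Jacobi equation then forces $R(E_i,\gamma_v')\gamma_v' = E_i$ in every direction and every orthogonal plane, so $(M,g)$ has constant sectional curvature $1$ and hence is a spherical space form. Promoting this to $M \cong \mathbb{S}^n(1)$ on the nose, as the statement claims, is the truly delicate point: it requires an additional input, such as restricting to simply-connected $M$ or a separate global argument ruling out non-trivial quotients $\mathbb{S}^n(1)/\Gamma$, and would be the part of the proof I would need to think about most carefully.
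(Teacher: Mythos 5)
The paper does not supply a proof of this statement; it cites Green's 1963 paper and treats the result as classical, so there is no in-text argument to compare against. Your proposal is the standard Berger-type averaging proof, and the inequality part is correct: the index form computation with $V_i=\sin(t)E_i$, the use of Liouville-measure invariance under the geodesic flow to replace $\Ric(\gamma_v'(t),\gamma_v'(t))$ by $\Ric(v,v)$, and the fiberwise trace identity $\int_{U_pM}\Ric(w,w)\,dw=\tfrac{\vol(S^{n-1})}{n}\Sc(p)$ are all right, and the Morse index theorem gives strict positivity of each $I(V_i,V_i)$ when $\conj(M)>\pi$, yielding the contradiction.

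Your instinct about the rigidity step is not just caution — it points to a real issue. The averaging argument forces every $V_i$ to be Jacobi for almost every $v$, hence $R(E_i,\gamma'_v)\gamma'_v=E_i$ for all orthonormal pairs, so $(M,g)$ has constant sectional curvature $1$. But the argument as such cannot go further: $\mathbb{RP}^n$ with the round metric has $\Sc\equiv n(n-1)$ and, since conjugate points are detected by Jacobi fields and are a local/universal-cover phenomenon, $\conj(\mathbb{RP}^n)=\pi$ as well. So there is no way to promote "constant curvature $1$" to "$M\cong\mathbb S^n(1)$" from the given hypotheses alone, and the correct rigidity conclusion one should extract from this argument is that $(M,g)$ is a round space form (equivalently, has constant sectional curvature $1$), with $\mathbb S^n(1)$ being the simply connected model. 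The theorem as quoted in the paper is stated a bit too strongly in the equality case; your proof proposal in fact reveals this, and the rest of it is sound.
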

Since the injectivity radius satisfies $\mathrm{Inj}(M) \leq \mathrm{conj}(M)$, Theorem \ref{green}  implies that for any closed Riemannian manifold with positive scalar curvature $\Sc_g \geq n(n-1)$, we have
$
\mathrm{Inj}(M) \leq \pi,
$
and equality holds if and only $M$ is isometric to the unit round sphere $\mathbb{S}^n(1)$. Note that the manifold $M$ is assumed to be closed in Green's theorem.  If one assumes that $(M^n,g)$ is a complete, non-compact Riemannian manifold with uniformly positive scalar curvature $\Sc_g \geq n(n-1)$, then it is still an open question whether  $\inj(M) \leq \pi$.  In fact,  Gromov proposed the following stronger conjecture. 
\begin{conjecture}[\cite{gromov2023_injectivity_radius}]\label{conj: green_noncompact}
	Suppose that $(M^n,g)$ is a complete, non-compact Riemannian manifold with uniformly positive scalar curvature $\Sc(g) \geq (n-1)(n-2)$. Then we have 
		$$\inj(M) \leq \pi,$$
	and equality holds if and only if $M$ is isometric to  $(\mathbb{S}^{n-1}(1) \times \mathbb{R}, g_{\mathbb{S}^{n-1}} + dt^2)$.
\end{conjecture}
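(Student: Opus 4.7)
The strategy is to attempt this by contradiction via an index-theoretic pairing with the spin Dirac operator, using a Llarull-type rigidity theorem targeting the extremal model $\mathbb{S}^{n-1}(1)\times\mathbb{R}$, which has scalar curvature precisely $(n-1)(n-2)$ and injectivity radius exactly $\pi$. Assume $\inj(M) > \pi$, so that some $p_0 \in M$ admits $r > \pi$ with $\exp_{p_0}$ a diffeomorphism on $B(0,r) \subset T_{p_0}M$. The plan is to build a smooth, area-non-increasing map $\Phi \colon M \to \mathbb{S}^{n-1}(1)\times\mathbb{R}$ concentrated on $B(p_0,\pi)$ of nonzero degree (in a suitable relative/locally finite sense), and then to apply a sharp scalar curvature inequality to extract a contradiction.

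The construction of $\Phi$ would proceed in geodesic polar coordinates $(t,\theta) \in (0,r)\times \mathbb{S}^{n-1}$ around $p_0$. On an equatorial band $\{|t - \pi/2|\leq \delta\}$ set $\Phi(\exp_{p_0}(t\theta)) = (\theta, t-\pi/2)$, interpolate smoothly to a constant outside $B(p_0, \pi)$, and rescale angular directions via Rauch-type comparison so that $\Phi$ is area-non-increasing on all 2-planes — room for this is provided precisely because $\inj(M) > \pi$ rules out conjugate points on the relevant region. With $\Phi$ in hand, consider the Dirac operator $D_E$ on $M$ twisted by $E = \Phi^* S_{\mathbb{S}^{n-1}}$. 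The Lichnerowicz formula reads
\[ D_E^2 = \nabla^*\nabla + \tfrac{1}{4}\Sc_g + \mathcal{R}_E, \]
and a pointwise computation (using the spinor bundle curvature of the round $\mathbb{S}^{n-1}(1)$) should give $\mathcal{R}_E \geq -\tfrac{1}{4}(n-1)(n-2)$ whenever $\Phi$ is area-non-increasing, matching the hypothesized bound $\Sc_g \geq (n-1)(n-2)$ sharply. This forces $D_E^2 \geq 0$, with equality only where both inequalities saturate. A relative or coarse index argument — detecting the nontriviality of $\Phi$ on $B(p_0,\pi)$ in the spirit of the Lipschitz controlled K-theory machinery of the paper — should produce a nonzero index of $D_E$, contradicting the vanishing forced by Lichnerowicz and thereby establishing the strict inequality.

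For the rigidity statement, if $\inj(M) = \pi$ then the above estimates are saturated on a nonempty set, producing a nonzero twisted parallel spinor and $\Sc_g \equiv (n-1)(n-2)$ there. By the Hitchin--Wang classification of manifolds admitting parallel spinors, one obtains a local product structure $\mathbb{S}^{n-1}\times\mathbb{R}$ on open sets, which one then propagates globally via a Cheeger--Gromoll splitting argument together with completeness and connectedness of $M$. The hard part will be the analytic core of the strict inequality: no Llarull-type theorem is available off the shelf for the noncompact target $\mathbb{S}^{n-1}(1)\times\mathbb{R}$, and one must build one — presumably by combining the quantitative/Lipschitz K-theory framework of this paper with a model comparison at infinity to define and compute a relative index pairing. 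Compounding this, the map $\Phi$ must be genuinely area-non-increasing without destroying its index content, which is delicate because the scalar curvature hypothesis is borderline sharp and leaves no quantitative slack to absorb errors in the rescaling; it is plausible that any honest attempt will require first establishing a sharp comparison theorem for the metrics $g_t$ in normal coordinates under the scalar curvature lower bound, which itself is a substantial problem.
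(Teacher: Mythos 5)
This statement is presented in the paper as an open \emph{conjecture} (Conjecture \ref{conj: green_noncompact}, attributed to Gromov \cite{gromov2023_injectivity_radius}); the paper gives no proof of it, and to my knowledge it remains open. So there is nothing in the paper to compare your argument against, and what you have written must be judged on its own merits as an attempted resolution of an open problem.

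As such, your sketch contains a gap that is not a technicality but the central difficulty of the conjecture. You propose to build, in geodesic polar coordinates around $p_0$, an area-non-increasing map $\Phi\colon B(p_0,\pi)\to \mathbb{S}^{n-1}(1)\times\mathbb{R}$ of nonzero degree, and you invoke ``Rauch-type comparison'' to rescale the angular directions. But Rauch comparison needs a sectional (or at least Ricci) curvature bound; a lower bound on scalar curvature alone gives no pointwise control on the second fundamental forms of the distance spheres $\partial B(p_0,t)$, and the absence of conjugate points up to radius $\pi$ only says $\exp_{p_0}$ is nonsingular, not that the angular metric is pointwise dominated by the round one. Concretely, one can have $\Sc_g\geq (n-1)(n-2)$ and $\inj(M)>\pi$ while the metric spheres $\partial B(p_0,t)$ are very ``long'' in some directions, so that the naive map $\exp_{p_0}(t\theta)\mapsto(\theta,t-\pi/2)$ expands areas. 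Without some replacement for the missing comparison --- which is exactly the ``sharp comparison theorem for the metrics $g_t$'' you mention only at the very end as a side remark --- the map $\Phi$ cannot be constructed, and everything downstream (the Llarull-type twist curvature estimate, the index pairing) has no input. Your proposal effectively assumes the hard part.

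Two further issues are worth flagging even granting the map. First, the index datum: since $\Phi$ is constant outside a compact set and the target $\mathbb{S}^{n-1}(1)\times\mathbb{R}$ is noncompact and has trivial topology in degree $n$, it is not clear that $\Phi^*S_{\mathbb{S}^{n-1}}$ pairs nontrivially with $[D]$; one needs to exhibit a genuinely nontrivial relative or compactly supported $K$-theory class, which the sketch does not supply. Second, the equality case requires a global Cheeger--Gromoll-type splitting argument from a pointwise rigidity on a possibly small open set; propagating that to all of $M$ under a mere scalar curvature bound is itself a delicate rigidity statement that would have to be proved. You are candid that these pieces are missing, which is to your credit, but that candor confirms the sketch is a plan of attack rather than a proof, and the plan hinges on a comparison estimate that the hypothesis does not obviously yield.
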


The following theorem is essentially a formulation of Llarull's theorem on scalar rigidity of round spheres \cite{Llarull_sharp}.

%with Conjecture \ref{conj: inj_radsphere},  a weaker version of Conjecture \ref{conj: green_noncompact} could be viewed as a

\begin{theorem}[\cite{Llarull_sharp}]
	Suppose that $(M^n,g)$ is a complete, spin, noncompact manifold with uniformly positive scalar curvature $\Sc_g \geq n(n-1)$. Then we have 
	$$\radsphere(M) \leq 1,$$
	and equality holds if and only if $(M,g)$ is isometric to the unit round sphere $\mathbb{S}^n(1)$.
\end{theorem}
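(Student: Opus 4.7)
The plan is to adapt Llarull's original Bochner-type argument to the (possibly) noncompact setting, using the $K$-theoretic index pairing framework developed in Section \ref{sec:main}. The content of the statement is essentially Llarull's rigidity theorem translated into the language of $\radsphere$.

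I would proceed by contradiction and assume there exist $R > 1$ and a $1$-Lipschitz map $f \colon M \to \mathbb{S}^n(R)$ of nonzero degree, locally constant at infinity. Let $S_{\mathbb{S}^n}$ denote the complex spinor bundle on $\mathbb{S}^n(R)$ equipped with its Levi-Civita spin connection, and let $E = f^* S_{\mathbb{S}^n}$ with the pulled-back Hermitian connection. Form the twisted Dirac operator $D_E$ on $S_M \otimes E$. The Lichnerowicz--Weitzenböck formula reads
$$D_E^2 \;=\; \nabla^*\nabla \,+\, \frac{\Sc_g}{4} \,+\, \mathcal{R}^E,$$
where $\mathcal{R}^E$ is the curvature endomorphism induced by the connection on $E$. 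The crucial pointwise estimate, due to Llarull, is
$$\mathcal{R}^E \;\geq\; -\frac{n(n-1)}{4 R^2},$$
which exploits both the explicit form of the curvature tensor of $S_{\mathbb{S}^n}$ on $\mathbb{S}^n(R)$ and the hypothesis $\lip(f) \leq 1$. Combining with $\Sc_g \geq n(n-1)$ yields
$$D_E^2 \;\geq\; \nabla^*\nabla \,+\, \frac{n(n-1)}{4}\Big(1 - \frac{1}{R^2}\Big),$$
which is strictly positive when $R > 1$, so $D_E$ admits a spectral gap at zero.

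Arguing as in the proof of Proposition \ref{thm: Scalar_curvatyre_Lip}, the spectral gap forces the index pairing of $[D_M]$ with the $K$-theory class associated to the bundle difference coming from $E$ to vanish. Because $f$ is locally constant at infinity, this $K$-theory class is compactly supported on $M$, so the pairing is well-defined by the framework of Section \ref{sec:main}. On the other hand, by naturality of the index pairing and the classical Atiyah--Singer computation on $\mathbb{S}^n$, the same pairing equals (up to sign) $\deg(f) \neq 0$, contradicting the vanishing above. Hence $\radsphere(M) \leq 1$. For the equality case $\radsphere(M) = 1$, the estimates above become saturated; applying Llarull's classical rigidity step, any extremal $1$-Lipschitz degree-one map $f \colon M \to \mathbb{S}^n(1)$ must be a local isometry with every curvature inequality tight pointwise, which forces $M$ to be globally isometric to $\mathbb{S}^n(1)$.

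The main obstacle is the noncompactness of $M$: the classical Llarull theorem deduces the index contradiction from Atiyah--Singer on a closed manifold, which is unavailable here. Replacing the Fredholm index by the coarse/quantitative index pairing introduced in Section \ref{sec:main} resolves this, but only when $[E] - [\mathrm{triv}]$ represents a compactly supported $K$-theory class on $M$; this is precisely what the "locally constant at infinity" hypothesis on $f$ is designed to secure. A secondary technical issue is the rigidity step, which will require a careful analysis of limiting configurations (or of harmonic sections of $S_M \otimes E$ at the borderline Weitzenböck equality) in order to promote pointwise equality in Llarull's curvature estimate to a global isometry $M \cong \mathbb{S}^n(1)$.
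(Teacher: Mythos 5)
The paper does not actually prove this statement---it simply cites Llarull and remarks that the theorem is ``essentially a formulation'' of his scalar rigidity theorem, which in the original reference is stated only for \emph{closed} manifolds. Your Bochner--Weitzenb\"ock sketch is therefore the natural route, and it is essentially the correct one, but there are a few places where the argument as written is either imprecise or would need repair.

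First, the appeal to Proposition \ref{thm: Scalar_curvatyre_Lip} is a mismatch. That proposition's mechanism is: the spectral gap of the \emph{untwisted} Dirac operator coming from $\Sc_g \geq \sigma^2$, combined with the \emph{smallness of the Lipschitz constant} of $p,q$, forces $d_{t,p,q}$ to stay near the trivial idempotent $e$. In your argument, what produces the vanishing is instead the spectral gap of the \emph{twisted} operator $D_E$, and for that, one does not need the Lipschitz-controlled machinery at all: the invertibility of $D_E$, together with $E$ being trivialized outside a compact set (which is exactly what ``locally constant at infinity'' buys), is the hypothesis of the Gromov--Lawson relative index theorem, which directly gives $\mathrm{ind}_{\mathrm{rel}}(D_E) = 0$. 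It is cleaner, and logically necessary, to invoke that relative-index argument rather than the quantitative one; the two are analogous (both use the finite-propagation normalizing function $\chi(t^{-1}D)$ becoming a genuine unitary as $t\to 0$) but not the same proposition.

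Second, dimension parity needs to be addressed. Llarull's twist $S_{\mathbb{S}^n}^\pm$ produces a nonvanishing index only for $n$ even (it is there that the Chern character of the half-spinor bundle pairs nontrivially with the fundamental class). For odd $n$, one should pass to $M\times\mathbb{R}$ (or $M\times\mathbb{S}^1$), use that $\Sc_{g+dt^2} = \Sc_g$, and invoke Lemma \ref{remark: fillrad_of_direct_roduct}-style compatibility; your sketch omits this.

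Third, the rigidity step has a genuine gap beyond the ``careful analysis'' you flag. Since $\radsphere(M)$ is defined as an infimum, $\radsphere(M)=1$ does \emph{not} a priori supply a $1$-Lipschitz map of nonzero degree to $\mathbb{S}^n(1)$; one must first extract such an extremal map (e.g.\ by an Arzel\`a--Ascoli limit of $1$-Lipschitz maps $f_R\colon M\to\mathbb{S}^n(R)$ as $R\downarrow 1$, using the ``locally constant at infinity'' condition to keep degrees under control), and only then run the borderline Weitzenb\"oeck analysis. At the borderline, the parallel twisted spinor forces $M$ to be compact (e.g.\ via a Bonnet--Myers-type consequence of the saturated curvature inequalities), after which Llarull's closed-manifold rigidity applies verbatim. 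Your sketch leaps directly to ``any extremal $1$-Lipschitz degree-one map must be a local isometry'' without addressing existence of the extremal map or the passage from local isometry to global isometry in the a priori noncompact setting.

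In short: the approach is right and matches what the cited theorem requires, but the noncompactness is handled by the Gromov--Lawson relative index theorem (not by the Lipschitz-controlled pairing of Section \ref{sec:main}), and both the odd-dimensional case and the extraction of an extremal map in the rigidity step need to be supplied.
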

In particular, if we assume Conjecture \ref{conj: inj_radsphere},  then the above theorem of Llarull will give a universal (but possibly non-optimal) upper bound on the injectivity radius, which would lead to a positive solution of a non-optimal version of  Conjecture \ref{conj: green_noncompact}. 

\begin{comment}
\begin{enumerate}
	\item $\mathbb{S}^2 \times \mathbb{R}^{n-2}$ has uniformly positive scalar curvature and $\width_{n-2}(\mathbb{S}^2 \times \mathbb{R}^{n-2}) \leq \pi$, and
	\item all Riemannian manifolds with uniformly positive scalar curvature obtained from the codimension 2 surgery process have bounded $(n-2)$-th Uryson width,
\end{enumerate}
\end{comment}

Moreover, in view of the inequalities from line \eqref{eq: series_of_metric_invariants} and \eqref{eq: radshpere_fillingradius}, the following Uryson width conjecture  of Gromov is closely related to Conjecture \ref{conj: green_noncompact}.  
\begin{conjecture} [\cite{gromovmetricstructure, Gromovadozen}]\label{conj: gromov_uryson_width_conjecture}
	There exists a universal constant $c_n$ such that
	\begin{equation}
		\width_{n-2}(M) \leq c_n
	\end{equation}
	for all complete Riemannian manifolds  $(M^n,g)$ with  uniformly positive scalar curvature $\Sc_g \geq n(n-1)$.
\end{conjecture}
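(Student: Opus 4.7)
The plan is to attempt the conjecture by combining the quantitative filling radius bound of Theorem \ref{thm:main} with a codimension-two geometric reduction. Throughout I would work under the additional hypotheses of Theorem \ref{thm:main}, namely bounded geometry, spin structure, and finite asymptotic dimension; the general case would then require extending Theorem \ref{thm:main} beyond the finite asymptotic dimension setting.

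The first step is to upgrade the $\fillrad_{\mathbb Q}$ estimate to a bound on $\width_{n-1}$. Given a rational filling of the Kuratowski image $\Psi(M)$ inside $N_R(\Psi(M))$, I would thicken and simplicially approximate the filling to produce an $(n-1)$-dimensional polyhedron $P^{n-1}$ together with a uniformly Lipschitz map $M \to P^{n-1}$ whose fibres have diameter controlled by $R$. This is morally the content of Conjecture \ref{conj: width_filling}, and the bounded geometry hypothesis should permit a quantitative deformation retraction from an $(n+1)$-chain onto its $(n-1)$-skeleton. The second step would reduce one further dimension via a stable $\mu$-bubble slicing: for a weight function calibrated to the scalar curvature lower bound, cut $M$ along a family of hypersurfaces $\Sigma_t$, each of which inherits positive scalar curvature in the warped sense from the second variation formula. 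Iterating Theorem \ref{thm:main} (or its Uryson-width analogue) on each $\Sigma_t$ would then provide a map to an $(n-2)$-complex with uniformly bounded fibres.

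The main obstacle lies in the two dimension jumps that this strategy hides. First, the K-theoretic argument of Theorem \ref{thm:main} detects only the top-degree component of the $K$-homology class of the Dirac operator under the Chern character, which is naturally dual to a codimension-zero pairing; translating this into codimension-two geometric information likely requires a refinement of the Lipschitz-controlled $K$-theory that tracks stratified or layered cycles in the nerve complex $\mathcal N_r$. Second, stable $\mu$-bubbles can develop singularities for $n \geq 8$, so either a Chodosh--Li style generic regularity argument or a purely spinorial replacement of the slicing must be invoked.

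A plausible route that avoids both difficulties is to bypass the geometric slicing entirely and instead work with a relative index pairing: decompose the nerve $\mathcal N_r$ as a union of two pieces whose intersection is a codimension-one subcomplex, then use a quantitative Mayer--Vietoris sequence in the Lipschitz-controlled $K$-theory of Section \ref{section: asym} to locate a nontrivial index pairing that is supported in codimension two. If successful, this would keep the entire argument inside the operator-algebraic framework of the paper, with the improvement from $\fillrad$ to $\width_{n-2}$ reflected in an improved exponent on $m$ in the constant $L_m$ of Theorem \ref{thm: K_rep_theorem}. I expect this last step to be the hardest part, since it asks for a genuinely new quantitative input into the Lipschitz-controlled Mayer--Vietoris machinery.
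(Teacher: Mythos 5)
The statement you are attempting to prove is Conjecture~\ref{conj: gromov_uryson_width_conjecture}, which the paper does not prove and does not claim to prove: it is Gromov's Uryson width conjecture, included in the appendix purely as context. The paper only establishes Theorem~\ref{thm:main}, a quantitative bound on $\fillrad_{\mathbb Q}(M)$ under the additional hypotheses of bounded geometry, a spin structure, and finite asymptotic dimension. The authors explicitly state that even the \emph{weaker} codimension-one statement $\width_{n-1}(M)\leq c_n$ is, modulo the open Conjecture~\ref{conj: width_filling}, only \emph{equivalent} to the filling radius conjecture, and they view their theorem as a solution of that weaker codimension-one version restricted to finite asymptotic dimension. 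The genuine codimension-two statement $\width_{n-2}(M)\leq c_n$ remains entirely open for $n\geq 4$.

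Your proposal is upfront about its gaps, which is good, but it should be equally upfront that each of its three ingredients is itself unresolved. (i) Passing from a filling of $\Psi(M)$ in $N_R(\Psi(M))$ to an $(n-1)$-dimensional polyhedral map with $R$-controlled fibres is precisely Conjecture~\ref{conj: width_filling}; bounded geometry gives you a locally finite triangulation but not an obvious quantitative deformation retraction of an $(n+1)$-chain in $L^\infty(M)$ onto an $(n-1)$-skeleton. (ii) The $\mu$-bubble slicing step requires regularity of minimizers, which is only available generically for $n\leq 7$ (or $n\leq 8$ with delicate perturbation arguments), and moreover the induced scalar curvature on slices is of warped-product type, so one cannot naively re-apply Theorem~\ref{thm:main} to $\Sigma_t$. (iii) Your alternative Lipschitz-controlled Mayer--Vietoris route is exactly the ``genuinely new quantitative input'' you acknowledge is missing: the Chern character of $[D]$ pairs in top degree, and the K-theoretic machinery in this paper is designed to detect non-vanishing of that pairing, not to produce a drop of two in fibre dimension. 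There is no mechanism in Section~\ref{section: asym} that converts a non-vanishing index into a codimension-two obstruction. So the proposal is a reasonable speculative sketch of how one might \emph{attack} the conjecture, but it is not a proof, and there is no proof in the paper for you to be measured against.
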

An immediate consequence of Conjecture \ref{conj: gromov_uryson_width_conjecture} is the following conjecture of Gromov and Lawson. 
\begin{conjecture} [Gromov--Lawson]\label{conj: GLSY}
	If $M^n$ is a closed, aspherical manifold, then  $M$ admits no Riemannian metric with positive scalar curvature.
\end{conjecture}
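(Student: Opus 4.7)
The plan is to derive Conjecture \ref{conj: GLSY} from Conjecture \ref{conj: gromov_uryson_width_conjecture} by a short contradiction argument, chaining together the inequalities and examples already assembled in the appendix. Suppose, toward a contradiction, that a closed aspherical manifold $M^n$ admits a Riemannian metric $g$ of positive scalar curvature. Because $M$ is compact, I will first rescale $g$ by a positive constant so that $\Sc_g \geq n(n-1)$ uniformly on $M$. Then I pull $g$ back along the covering map to the universal cover $\widetilde M$; since $M$ is closed, the lifted metric $\widetilde g$ is complete, and scalar curvature is a pointwise invariant of local geometry, so $(\widetilde M, \widetilde g)$ is a complete Riemannian manifold with $\Sc_{\widetilde g} \geq n(n-1)$ uniformly.

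Applying Conjecture \ref{conj: gromov_uryson_width_conjecture} to $(\widetilde M, \widetilde g)$ produces a universal constant $c_n$ with $\width_{n-2}(\widetilde M) \leq c_n$. The monotonicity $\width_{k+1}(\widetilde M) \leq \width_k(\widetilde M)$ from Lemma \ref{lemma: width_sequence}(1), combined with the inequality \eqref{eq: fillrad_width}, then yields
$$\fillrad(\widetilde M) \;\leq\; \tfrac{1}{2}\width_{n-1}(\widetilde M) \;\leq\; \tfrac{1}{2}\width_{n-2}(\widetilde M) \;\leq\; \tfrac{c_n}{2}.$$
Since any integral bounding chain remains a bounding chain after tensoring with $\mathbb Q$, I have the estimate $\fillrad_{\mathbb Q}(\widetilde M) \leq \fillrad(\widetilde M) < \infty$.

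The last step of the plan is to confront this with Example \ref{example: ashperical_filling}, which asserts that $\fillrad_{\mathbb Q}(\widetilde M) = \infty$ for the universal cover of any closed aspherical manifold; the resulting contradiction rules out the existence of $g$ and completes the implication. Given the machinery assembled in the paper, this argument is entirely formal, which is precisely the point: the actual obstacle lies not in the implication but in its hypothesis, since Conjecture \ref{conj: gromov_uryson_width_conjecture} remains open in general, and the main theorem of the present paper only tackles a large-scale \emph{rational} analogue (under the additional assumption of finite asymptotic dimension) by controlling $\fillrad_{\mathbb Q}$ directly rather than by producing a bound on $\width_{n-2}$.
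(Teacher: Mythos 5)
Your conditional argument --- rescale, lift to the universal cover, bound $\fillrad_{\mathbb Q}(\widetilde M)$ via $\width_{n-2} \geq \width_{n-1} \geq 2\,\fillrad$, and then contradict Example \ref{example: ashperical_filling} --- is exactly the implication chain the paper records in the appendix, where it remarks that Conjecture \ref{conj: gromov_uryson_width_conjecture} implies Conjecture \ref{conj: gromov_filling_radius_conjecture} via line \eqref{eq: series_of_metric_invariants}, and the latter together with Example \ref{example: ashperical_filling} implies Conjecture \ref{conj: GLSY}. You also correctly observe that the statement remains labeled a conjecture because its hypothesis is open; the paper, like you, establishes only the implication and not the unconditional result (and indeed the weaker Conjecture \ref{conj: gromov_filling_radius_conjecture} would already suffice as a hypothesis, as noted in the introduction).
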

Let us briefly comment on the current status of  the various conjectures that have appeared in the present paper, and the relations among them. It follows from the metric inequalities (\ref{eq: series_of_metric_invariants}) that  Conjecture \ref{conj: gromov_uryson_width_conjecture} implies Conjecture \ref{conj: gromov_filling_radius_conjecture}, where the latter together with Example \ref{example: ashperical_filling} implies Conjecture \ref{conj: GLSY}.
 For three dimensional manifolds,  Conjecture \ref{conj: gromov_uryson_width_conjecture} has been confirmed in \cite{gl_psc_dirac, Gromov4lectures2019, liokumovich2023waist}.
Moreover, Conjecture \ref{conj: GLSY} has been proved for dimensions $n=3, 4, 5$  \cite{gl_psc_dirac, Gromov5fold,Chodosh:2020tk}. In dimension higher than $5$, Conjecture \ref{conj: GLSY} remains open in general. On the other hand, by applying noncommutative geometric methods, Conjecture \ref{conj: GLSY} has been verified in all dimensions for a large class of aspherical manifolds whose fundamental group satisfies some geometric conditions.  For example, the third author proved Conjecture \ref{conj: GLSY} for all  $n$-dimensional aspherical manifolds whose fundamental group has finite asymptotic dimension or more generally whose fundamental group coarsely embeds into a Hilbert  space \cite{Yu_zero_spectrum, Yu_Novikov_fsd, Yucoarseembed}. Moreover, in conjunction with Conjecture \ref{conj: width_filling}, a weaker version of Conjecture \ref{conj: gromov_uryson_width_conjecture} that asserts  $\width_{n-1}(M) \leq c_n$ instead of $\width_{n-2}(M) \leq c_n$ is  equivalent to Conjecture \ref{conj: gromov_filling_radius_conjecture}. In this sense, the main theorem of the present paper could also be viewed as a solution to  this weaker version of Conjecture \ref{conj: gromov_uryson_width_conjecture} for the class of manifolds with finite asymptotic dimension.

\end{document}